\DeclareMathOperator\C{\mathbb C}
\DeclareMathOperator\Z{\mathbb Z}
\newtheorem{theorem}{Theorem}[section]
\newtheorem{lemma}[theorem]{Lemma}
\newtheorem{cor}[theorem]{Corollary}
\newtheorem{prop}[theorem]{Proposition}
\theoremstyle{definition}
\newtheorem{definition}[theorem]{Definition}
\newtheorem{example}[theorem]{Example}
\theoremstyle{remark}
\numberwithin{equation}{section}
\newcommand{\dontprint}[1]\relax
\newcommand{\Lie}{\operatorname{Lie}}
\newcommand{\ind}{\operatorname{ind}}
\newcommand{\Tor}{\operatorname{Tor}}
\newcommand{\coker}{\operatorname{coker}}
\newcommand{\De}{\Delta}
\newcommand{\Aut}{\operatorname{Aut}}
\newcommand{\und}{\underline}
\newcommand{\Pic}{\operatorname{Pic}}
\newcommand{\hra}{\hookrightarrow}
\newcommand{\A}{{\mathbb A}}
\newcommand{\wt}{\widetilde}
\newcommand{\ot}{\otimes}
\newcommand{\Hom}{\operatorname{Hom}}
\newcommand{\Ext}{\operatorname{Ext}}
\newcommand{\Om}{\Omega}
\newcommand{\TT}{{\mathcal T}}
\newcommand{\VV}{{\mathcal V}}
\newcommand{\DD}{{\mathcal D}}
\renewcommand{\SS}{{\mathcal S}}
\newcommand{\HH}{{\mathcal H}}
\newcommand{\GG}{{\mathcal G}}
\newcommand{\LL}{{\mathcal L}}
\newcommand{\MM}{{\mathcal M}}
\newcommand{\OO}{{\mathcal O}}
\newcommand{\UU}{{\mathcal U}}
\newcommand{\si}{\sigma}
\newcommand{\de}{\delta}
\newcommand{\sub}{\subset}
\newcommand{\ov}{\overline}
\newcommand{\im}{\operatorname{im}}
\newcommand{\om}{\omega}
\newcommand{\la}{\lambda}
\renewcommand{\a}{\alpha}
\newcommand{\id}{\operatorname{id}}
\newcommand{\GL}{\operatorname{GL}}
\newcommand{\G}{{\mathbb G}}
\newcommand{\ga}{\gamma}
\newcommand{\lan}{\langle}
\newcommand{\ran}{\rangle}
\renewcommand{\k}{{\mathbf{k}}}
\newcommand{\SL}{{\operatorname{SL}}}
\newcommand{\XX}{{\mathcal X}}
\newcommand{\fg}{{\mathfrak g}}
\newcommand{\fh}{{\mathfrak h}}
\newcommand{\Bun}{{\operatorname{Bun}}}
\newcommand{\End}{{\operatorname{End}}}
\newcommand{\vol}{{\operatorname{vol}}}
\renewcommand{\Re}{{\operatorname{Re}}}
\newcommand{\eps}{\epsilon}
\newcommand{\R}{{\mathbb R}}
\title[Schwartz $\kappa$-densities]
{Schwartz $\kappa$-densities for the moduli stack of rank $2$ bundles on a curve over a local field}
\author{Alexander Braverman}
\author{David Kazhdan}
\author{Alexander Polishchuk}
\thanks{D.K. is partially supported by the ERC grant 101142781.
A.P. is partially supported by the NSF grants DMS-2001224 and DMS-2349388, by the Simons Travel grant MPS-TSM-00002745,
and within the framework of the HSE University Basic Research Program.}
\address{Deparment of Mathematics, University of Toronto, Ontario, Canada}
\email{braval@math.toronto.edu}
\address{Einstein Institute of Mathematics,
The Hebrew University of Jerusalem,
Jerusalem 91904, Israel}
\email{kazhdan@math.huji.ac.il}
\address{
    Department of Mathematics, 
    University of Oregon, 
    Eugene, OR 97403, USA; National Research University Higher School of Economics, Moscow, Russia}
  \email{apolish@uoregon.edu}
\begin{document}

\begin{abstract}
Let $\Bun$ be the moduli stack  of rank $2$ bundles with fixed determinant on a smooth proper irreducible curve $C$ over a local field $F$.
We show how to associate with a Schwartz $\kappa$-density,
for $\Re(\kappa)\ge 1/2$, 
a smooth function on the corresponding coarse moduli space of very stable bundles.
In the non-archimedean case we also prove that the stack $\Bun$ is $\kappa$-bounded in the sense of \cite[Def.\ 2.10]{BK}  for any $\kappa\in\C$.
\end{abstract}

\maketitle

\section{Introduction}

Let $C$ be a smooth (connected) proper curve over a local field $F$ and let $G$ be a reductive group. 

The {\it analytic Langlands correspondence} (that is still only partially understood)
is concerned with the study of certain Hecke operators on Schwartz vector spaces associated with the stack of $G$-bundles over $C$ (see \cite{EFK1, EFK2, EFK3},
\cite{BK} and \cite{BKP}). Conjecturally, these (commuting) operators extend to compact operators on the corresponding Hilbert space with
a discrete joint spectrum, which is expected to be related to some ``Galois side" objects for the Langlands dual group $G^\vee$.

In this paper we study the Schwartz spaces $\SS(\Bun_G,|\om|^\kappa)$ of $\kappa$-densities (where $\kappa\in\C$) on the stack $\Bun_G$ over $F$. In the non-archimedean case
the Schwartz spaces of algebraic stacks were introduced in \cite{GK} (see also \cite{BK} and \cite{BKP}). In the archimedean case similar spaces were introduced in
\cite{Sak}. These Schwartz spaces are defined as inductive limits of the spaces $\SS(X,|p^*\om|^\kappa)_{H(F)}$ of coinvariants, where $[X/H]\hra \Bun_G$ is an open substack
with $X$ a smooth variety, $p:X\to\Bun_G$ is the corresponding map, and $H\simeq \GL_N$ (in the archimedean case taking coinvariants involves taking the closure of
the subspace of elements of the form $hv-v$).

Recall that a $G$-bundle $P$ on $C$ is called {\it very stable} if there are no nonzero nilpotent Higgs fields $\phi\in H^0(C,P_{\fg})$ where
$P_{\fg}$ is the vector bundle corresponding to $P$ and the adjoint representation of $G$. 
Let $M_G^{vs}\sub \Bun_G$ denote the coarse moduli space of very stable bundles. Then \cite[Conjecture 3.5(1)]{BK} states in particular that for $\operatorname{Re}(\kappa)\ge 1/2$, 
every Schwartz $\kappa$-density in $\SS(\Bun_G,|\om|^\kappa)$ defines in a natural way
a smooth $\kappa$-density on $M_G^{vs}(F)$.
The main result of the present paper is a proof of this Conjecture in the case when $G=\SL_2$ in both archimedean and non-archimedean cases. 

Namely, for each line bundle $L_0$ on $C$, we consider the stacks $\Bun_{L_0}$ of rank $2$ bundles $E$ with $\det(E)\simeq L_0$ (for trivial $L_0$ this is exactly
$\Bun_{\SL_2}$), and the open DM-substack of very stable bundles $\MM_{L_0}^{vs}\sub \Bun_{L_0}$, which is dense if $C$ has genus $g\ge 2$. 
The stack $\MM_{L_0}^{vs}$ is a quotient of a smooth variety $M_{L_0}^{vs}$ by the trivial action of $\mu_2$ (so $M_{L_0}^{vs}$ is also the coarse moduli space of $\MM^{vs}_{L_0}$).
We prove
that for $\Re(\kappa)\ge 1/2$, for finite type substacks $[X/G]\sub \Bun_{L_0}$ (with $G=\GL_N$), the integrals of Schwartz $\kappa$-densities on $X(F)$ 
over $F$-points of very stable $G$-orbits are absolutely convergent (away from a subset of orbits of measure zero) and define a map
$$\pi_{\kappa}:\SS(\Bun_{L_0},|\om|^{\kappa})\to C^\infty(M_{L_0}^{vs},|\om|^{\kappa}),$$ 
where in the non-archimedean case the class $C^\infty$ consists of locally constant sections.  
More precisely, we consider push-forwards of Schwartz $\kappa$-densities on $X(F)$ (restricted to the very stable locus) to $M_{L_0}^{vs}$ in the sense of distributions,
and prove that the obtained distributions are of class $C^\infty$ (see Theorem \ref{main-thm-nice}).

In addition, in the non-archimedean case we prove a certain boundedness result (announced in \cite{BK}), Theorem \ref{main-thm-bound}, stating that in the Schwartz spaces 
$\SS(\Bun_{L_0},|\om|^{\kappa})$ one can replace $\Bun_{L_0}$ by a substack of finite type.

Note that in \cite{EFK1} the authors consider in the archimedean case
another Schwartz space of half-densities on $\Bun_G$ defined using global (twisted) differential operators on $\Bun_G$.
Conjecturally their space should coincide with the space $\SS(\Bun_G,|\om|^{1/2})$ used in our paper. This is related to the conjectural statement that the image
of $\pi_{1/2}$ is in $L^2$ (an archimedean analog of \cite[Conj.\ 3.5(3)]{BK}).

The paper is organized as follows. In Section \ref{Schw-sec}, we gather necessary facts about Schwartz spaces. In the archimedean case we refer to
results from \cite{AG}, \cite{AGS} and \cite{AG-sm-tr}. We extend some of these results to the case of Schwartz sections depending holomorphically on a complex
parameter. In Section \ref{coinv-nonarch-sec}, we prove a slight extension of \cite[Thm.\ 6.9]{BZ}, which gives a criterion for an element of the Schwartz coinvariants space
to come from an invariant open subset. 
%Namely, for an $l$-space $X$ equipped with
%action of an $l$-group $G$, a closed $G$-invariant subset $Z\sub X$, and a $G$-equivariant line bundle $L$, we give in Theorem \ref{l-space-coinv-thm}
%a criterion for the map $\SS(X-Z,L)_G\to \SS(X,L)_G$
%to be a surjection (resp., isomorphism) in terms of coinvariants (resp., smooth homology) of the stabilizer subgroups of points  $z\in Z$, with the coefficients obtained from 
%the action on $L|_z$. By allowing more complicated coefficient rings we also get a similar criterion for Schwartz sections depending holomorphically on a complex parameter
%(see Lemma \ref{surjective-lem-nonarch}).
In Section \ref{coinv-arch-sec} we prove a similar surjectivity criterion in the archimedean case, Theorem \ref{coinv-thm}, which is an extension of \cite[Thm.\ B.0.2]{AG-sm-tr}
and is proved in a similar way.
In Section \ref{orbit-int-sec} we apply the results of Sections \ref{coinv-nonarch-sec} and \ref{coinv-arch-sec} to study the push-forwards of Schwartz $\kappa$-densities on a $G$-variety $X$
to $U_0/G$, where $U_0\sub X$ is an open subset such that $G$-acts freely on $U_0$ and the geometric quotient $U_0/G$ exists. 
%of an action of a group $G$ on $X$. More precisely, assuming that there is an open subset $U_0\sub X$ 
The main results are Lemma \ref{extension-lem1} and Lemma \ref{extension-lem2} which compare properties of such push-forwards for Schwartz $\kappa$-densities on $X$ and on
a $G$-invariant open subset $U\sub X$. 
%give criteria of proving such convergence
%for $X$ given that it is known for a $G$-invariant open subset $U\sub X$.
In Section \ref{Bun-sec}, we apply these results to the stacks $\Bun_{L_0}$ and their natural filtration by substacks $\Bun_{L_0}^{\le n}$ (see \ref{filtr-sec}).
The main additional ingredient (which is hard to generalize to the case of higher rank) is finding an explicit form of possible specializations of very stable bundles (see
Sec.\ \ref{vs-lim-sec}). The main results, Theorems \ref{main-thm-nice} and \ref{main-thm-bound} are proved in Sec.\ \ref{main-thm-sec}.

\medskip

{\it Acknowledgment}. We thank Pavel Etingof for help with the proof of Lemma \ref{anal-cont-lem}.

\section{Schwartz spaces}\label{Schw-sec}

\subsection{Schwartz spaces for varieties}\label{Schwartz-var-sec}

Let $F$ be a local field. We denote by $|\cdot |:F^*\to \C^*$ the standard absolute value (so for $F=\C$ we have $|z|=z\ov{z}$, and for a non-archimedean field,
$|a|=q^{-v(a)}$, where $q$ is the number of elements in the residue field).

For any smooth $F$-variety $X$, an (algebraic) line bundle $L$ over $X$, and a continuous character $\chi:F^*\to \C^*$ we have the corresponding
complex line bundle $L^{\chi}$ over $X(F)$. In particular, for  $\kappa\in \C$ we denote  by $|L|^{\kappa}$ the complex line bundle corresponding
to the character $\chi(a)=|a|^{\kappa}$. Note that for a non-archimedean field $F$,
the topological space $X(F)$ is an $l$-space\footnote{This means that it is locally compact, Hausdorff, and every point has a basis of compact open
neighborhoods.}, and the transition functions of $|L|^{\kappa}$ are locally constant.

For example, if $\om_X$ is the algebraic line bundle of top forms on $X$ then $|\om_X|$ is the line bundle of complex densities on $X(F)$.

Assume first that $F$ is non-archimedean. Then we can define Schwartz spaces of line bundles on $X(F)$. More generally, 
if $X$ is an $l$-space and $L$ is a complex line bundle on $X$
(so the transition functions of $L$ are are locally constant), we denote by $C^\infty(X,L)$ the space of locally constant sections,
and define the Schwartz subspace $\SS(X,L)\sub C^\infty(X,L)$ as sections  
with compact support. These definitions also make sense for arbitrary sheaves of $\C$-vector spaces (see \cite[Sec.\ 1.3]{Bern} and Sec.\ \ref{G-eq-sheaves-sec} below).

Now consider the archimedean case, $F=\C$ or $\R$.
Recall that for any smooth variety $X$ over $F$, Aizenbud and Gourevitch define in \cite{AG} the space $\SS(X)$ of Schwartz functions on $X(F)$
(which agrees with the standard Schwartz space in the case $X=\A^n$).
They also define the  space of tempered functions $\TT(X)$, which are smooth functions $f$ such that for any affine open $U\sub X$ and a polynomial differential
operator $D$ on $U$, the function $D(f|_U)$ grows at most polynomially. An important property of tempered functions is that
for $f\in \TT(X)$, one has $f\cdot \SS(X)\sub \SS(X)$.
Note that for any open embedding $U\hra X$ there is a natural linear embedding $\SS(U)\to \SS(X)$.

Now let $V$ be an algebraic vector bundle over $X$, $L_1$ and $L_2$ are algebraic line bundles, and $\kappa\in\C$.
%$L_{\kappa}$ a complex line bundle over $X(F)$ of the form
%$$L_{\kappa}=|L_1|^{\kappa}\ot |L_2|,$$
We consider the corresponding complex vector bundle
$$E_{\kappa}:=V\ot_F |L_1|^{\kappa}\ot |L_2|$$
over $X(F)$.
We denote by $C^\infty(X,E_{\kappa})$ the space of smooth sections.

Similarly to \cite{AG} we can define the Schwartz subspace $\SS(X,E_{\kappa})\sub C^\infty(X,E_{\kappa})$.
Note that $E_{\kappa}$ is not a Nash bundle, so the definitions of \cite{AG} do not directly apply. However,
we can still mimic the definition of \cite{AG} by choosing a finite open covering $(U_i)$ such that $V|_{U_i}$, $L_1|_{U_i}$ and $L_2|_{U_i}$
are trivial and considering the linear span of the spaces of $C^\infty$-sections of $E_{\kappa}$ coming from the Schwartz sections of the trivial bundles
$E_{\kappa}|_{U_i}$ over $U_i$ (see Definition \ref{Sch-hol-vec-bun-def} below for a similar definition with holomorphic dependence on $\kappa$).
%Note that if $f\in \OO^*(X)$ then for any complex number $\kappa$, the function $|f|^{\kappa}$ is tempered.
One can show that the usual properties still hold for the obtained Schwartz spaces $\SS(X,E_{\kappa})$ using tempered partitions of unity
(see \cite[Thm.\ 5.2.1]{AG}) and the fact that the transition functions of $E_{\kappa}$ are tempered.

%The same definitions make sense when $X(F)$ is replaced by its Nash open subset.???

\subsection{Schwartz sections with holomorphic dependence on a parameter}

%\subsection{Affine case}

Let $X$ be a smooth variety over $F$. We can define Schwartz functions on $X$ depending holomorphically on $\kappa\in \C$.
Let $D\sub \C$ be an open region. 

\begin{definition} We define $\SS^{hol}(X\times D)$ as the space of holomorphic functions on $D$ with values in $\SS(X)$. By definition, these are functions
$f:D\to \SS(X)$, such that for every continuous linear functional (any functional if $F$ is non-archimedean) $\varphi:\SS(X)\to \C$ the function $\varphi\circ f$ is holomorphic.
%with the topology of uniform convergence on compacts.
%such that for every $P\in \OO(X)$, every algebraic 
%differential operator $D\in \DD(X)$, every compact subset $K\sub D$, and every $n\ge 0$, one has
%$$\sup_{x\in X(F),\kappa\in K} |P(x)\cdot (D\partial_{\kappa}^nf(x,\kappa))|<\infty.$$
\end{definition}

Note that if $f$ is a tempered function on $X(F)$ (locally constant in the non-archimedean case), then we have a well defined continuous operator on $\SS^{hol}(X\times D)$ given by 
$\varphi(x,\kappa)\mapsto f(x)\cdot\varphi(x,\kappa)$.

We can also talk about Schwartz sections of the trivial bundle over $X$, depending holomorphically on a parameter $\kappa\in D$
(and this notion does not change under a tempered
change of trivializations).

%The association $D\mapsto \SS^{hol}(X\times D)$
%defines a sheaf over $\C$ (with respect to the usual topology).

It is known that in the archimedean case,
\begin{equation}\label{tensor-product-eq}
\SS^{hol}(X\times D)\simeq \SS(X)\hat{\otimes} \HH(D),
\end{equation}
where $\HH(D)$ is the space of holomorphic functions on $D$, and $\hat{\otimes}$ is the topological tensor product of nuclear spaces
(see \cite[ch.II,\S 3, n.3, Ex.2,3]{Groth}).

In the non-archimedean case we have instead
$$\SS^{hol}(X\times D)\simeq \SS(X)\otimes \HH(D).$$

%\subsection{General case}

Next, we want to consider Schwartz sections of nontrivial vector bundles. Assume first that $F=\R$ or $\C$.
As before, let $V$ be an algebraic vector bundle over $X$, 
%$L_{\kappa}$ a line bundle over $X(F)$ of the form $$L_{\kappa}=|L_1|^{\kappa}\ot |L_2|,$$ where 
$L_1$ and $L_2$ are algebraic line bundles, and
$$E_{\kappa}:=V\ot_F |L_1|^{\kappa}\ot |L_2|,$$
where $\kappa\in \C$.
%L_{\kappa},$$

Then we can define Schwartz sections of $E_{\kappa}$. 
depending holomorphically on $\kappa\in D$.

%We assume it makes sense to talk about Schwartz (resp., tempered) sections of $E_{\kappa}$.

%We want to consider Schwartz sections of $E_{\kappa}$ depending holomorphically on $\kappa\in D$.

\begin{definition}\label{Sch-hol-vec-bun-def} 
We define $\SS^{hol}(X\times D,E_{\kappa})$ as the space of smooth sections of $E_{\kappa}$ over $X\times D$, 
lying in the image of 
$$\bigoplus_i \SS^{hol}(U_i\times D,E_{\kappa}|_{U_i})\to C^\infty(X\times D,E_{\kappa})$$
for some open covering $(U_i)$ of $X$ such that $V|_{U_i}$ and $L_1|_{U_i}$, $L_2|_{U_i}$ are trivial.
%such that for each fixed $\kappa$,
%$f(?,\kappa)\in\SS(X,E_{\kappa})$, and for each fixed $x$ the function $f(x,?)$ is holomorphic in $\kappa$.
%When $E$ is trivial, we just write $\SS^{hol}(X\times D)$.
\end{definition}

The existence of tempered partition of unity shows that this definition does not depend on a choice of an open covering.

If $F$ is non-archimedean, we can give
an analogous definition for the Schwartz space $\SS^{hol}(X\times D,|L_1|^{\kappa}\ot |L_2|)$ (i.e., for trivial $V$).

Note that if $U\sub X$ is Zariski open then we have a natural embedding (extension by zero),
$$\SS^{hol}(U\times D,E_{\kappa})\to \SS^{hol}(X\times D,E_{\kappa}).$$

Also, for every $\kappa\in D$ we have a continuous map
$$\SS^{hol}(X\times D,E_{\kappa})\to \SS^{hol}(X,E_{\kappa})$$
given by the evaluation at $\kappa$.

%Furthermore, if $(U_i)$ is an open covering of $X$, then the tempered partition of unity shows that the map
%$$\bigoplus_i \SS^{hol}(U_i\times D,E_{\kappa})\to \SS^{hol}(X\times D,E_{\kappa})$$
%is surjective.

%On the other hand, the association 
%$$D\mapsto \SS^{hol}(X\times D,E_{\kappa})$$
%defines a sheaf over $\C$ (with respect to the usual topology).

\begin{lemma}\label{surj-int-lem}
Let $f:Y\to X$ be a smooth morphism of algebraic varieties, surjective on $F$-points, and let $E'_{\kappa}=f^*E_{\kappa}\ot |\om_f|$ (in the nonarchimedean case
we assume $V$ to be trivial).
Then for each $D$, the integration map
$$f_*:\SS^{hol}(Y\times D,E'_{\kappa})\to \SS^{hol}(X\times D,E_{\kappa})$$
is surjective.
\end{lemma}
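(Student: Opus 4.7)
The plan is to exhibit an explicit right inverse for $f_*$ by a local-to-global construction; because the fiber-wise cut-off function will be chosen independently of $\kappa$, the holomorphic dependence on $\kappa$ will be preserved automatically.

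First I would reduce to a product situation. Since $f$ is smooth of relative dimension $k=\dim Y-\dim X$ and surjective on $F$-points, I can cover $X(F)$ by analytic open sets $U_i$ such that each $U_i$ lies in the image of an analytic open $W_i\subset Y(F)$ equipped with an analytic isomorphism $W_i\simeq U_i\times B_i$ (with $B_i\subset F^k$ open) under which $f|_{W_i}$ is the first projection, and on which $\omega_f$, $f^*V$, $f^*L_1$, $f^*L_2$ (on $W_i$) and $V$, $L_1$, $L_2$ (on $U_i$) are all simultaneously trivialized. I then fix a tempered partition of unity $\{\chi_i\}$ on $X(F)$ subordinate to $\{U_i\}$, using \cite[Thm.\ 5.2.1]{AG} in the archimedean case (in the non-archimedean case one uses a locally constant partition with disjoint open supports).

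Given $\varphi\in\SS^{hol}(X\times D,E_\kappa)$, each $\chi_i\varphi\in\SS^{hol}(U_i\times D,E_\kappa|_{U_i})$ becomes, in the chosen trivializations, a tuple of Schwartz-holomorphic scalar functions on $U_i\times D$. Fix once and for all $\rho_i\in\SS(B_i)$ with $\int_{B_i}\rho_i(y)\,|dy|=1$ and set
$$\psi_i(x,y,\kappa):=\rho_i(y)\,(\chi_i\varphi)(x,\kappa)\,|dy|$$
on $W_i\simeq U_i\times B_i$, extended by zero to $Y$. A direct check in local coordinates shows $f_*\psi_i=\chi_i\varphi$, so $\psi:=\sum_i\psi_i$ satisfies $f_*\psi=\varphi$ provided that $\psi\in\SS^{hol}(Y\times D,E'_\kappa)$.

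The main technical obstacle is exactly that last membership: one must ensure that the sum $\sum\psi_i$ remains Schwartz and jointly holomorphic in $\kappa$. Holomorphy in $\kappa$ is automatic because $\rho_i$ is $\kappa$-independent and the trivialization transition data for $|L_1|^\kappa$ depends on $\kappa$ only through tempered, $\kappa$-holomorphic factors, so multiplication by such factors preserves $\SS^{hol}$. The Schwartz property follows from the tempered character of the partition $\{\chi_i\}$ together with the fact that multiplication by tempered functions preserves the Schwartz-holomorphic spaces, which is a direct adaptation of \cite[Thm.\ 5.2.1]{AG}. In the non-archimedean case (where the statement is restricted to trivial $V$) the whole argument becomes finite and elementary, since the partition is replaced by a disjoint decomposition and every Schwartz section has compact support.
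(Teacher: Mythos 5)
Your proposal is correct in substance, but it takes a genuinely different route from the paper. The paper also localizes on $X$ by a tempered partition of unity to trivialize $E_\kappa$, but then it does \emph{not} build a section: it quotes the surjectivity of $f_*:\SS(Y)\to\SS(X)$ from \cite[Thm.\ B.2.4]{AGS} as a black box and upgrades it to the parametrized statement via the identification \eqref{tensor-product-eq}, $\SS^{hol}(X\times D)\simeq\SS(X)\hat\otimes\HH(D)$, together with the fact that a surjection of nuclear Fr\'echet spaces stays surjective after $\hat\otimes$ with a nuclear Fr\'echet space. Your argument instead constructs an explicit right inverse $\varphi\mapsto\sum_i\rho_i(y)(\chi_i\varphi)(x,\kappa)\,|dy|$, which in effect re-proves \cite[Thm.\ B.2.4]{AGS} with the holomorphic parameter carried along; this buys you a self-contained proof in which holomorphy in $\kappa$ is manifest (the fiberwise cut-off is $\kappa$-independent) and which treats the archimedean and non-archimedean cases uniformly, at the cost of redoing the local geometry. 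One point you should tighten: in the archimedean setting the Schwartz formalism and the tempered partition of unity of \cite[Thm.\ 5.2.1]{AG} live in the Nash (semialgebraic) category and require a \emph{finite} cover, so ``analytic open sets $U_i$'' is not quite the right choice. The fix is standard: a smooth surjective morphism admits, Zariski-locally on $Y$, a factorization through an \'etale map to $\A^k\times X$, and on $F$-points this yields a finite cover of $X(F)$ by open Nash subsets $U_i$ over which the product structure $W_i\simeq U_i\times B_i$ is Nash; with that adjustment your construction goes through.
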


\begin{proof}
Let us give a proof for archimedean $F$. The non-archimedean case is analogous but simpler. 
By the partition of unity, the assertion is local in $X$, so we can assume that $E_{\kappa}$ is trivial, and we just need to prove the surjectivity
of the map
$$f_*:\SS^{hol}(Y\times D)\to \SS^{hol}(X\times D).$$
But this follows from the surjectivity of $f_*:\SS(Y)\to \SS(X)$ (see \cite[Thm.\ B.2.4]{AGS}) and from isomorphisms \eqref{tensor-product-eq}
(using the fact that all the spaces involved are nuclear Frechet spaces). 
%and the exactness of the functor $?\hat{\times}\HH(D)$ on nuclear Frechet spaces.
\end{proof}

\subsection{Schwartz spaces for stacks and coinvariants}\label{schw-stacks-sec}

We consider admissible smooth algebraic stacks over $F$ (see \cite[Sec.\ 2.2]{BKP}).
Recall that they are increasing unions of open substacks of the form $[X/G]$ where $G$ is a linear algebraic group over $F$ acting on a smooth $F$-variety $X$.
The Schwartz spaces can still be defined for line bundles over such stacks. In the non-archimedean case this is done in \cite{GK}
(see also \cite[Sec.\ 2.2]{BKP}). In the archimedean case this is due to Sakellaridis (see \cite{Sak}).

In the case of a quotient stack $[X/G]$, where $G=\GL_N$, the Schwartz spaces $\SS([X/G],|\LL|^{\kappa})$ can be computed as (algebraic) coinvariants of 
$\SS(X,|L|^{\kappa}\ot |{\bigwedge}^{top}(\fg)|^{-1})$ with respect to the $G(F)$-action, where $L$ is the pull-back of $\LL$ to $X$.
In the archimedean case, the definition of coinvariants $\SS(X,|L|^{\kappa}\ot |{\bigwedge}^{top}(\fg)|^{-1})_{G(F)}$
is slightly different: one has to take the quotient by the closure of the span of elements of the form $gv-v$.
Note that we can pull out the factor $|{\bigwedge}^{top}(\fg)|^{-1}$ when taking coinvariants since $G=\GL_N$ acts trivially on it.

Similarly, for an open domain $D\sub \C$, we define $\SS^{hol}([X/G]\times D,|\LL|^{\kappa})$ as appropriate coinvariants
of $\SS(X\times D,|L|^{\kappa}\ot |{\bigwedge}^{top}(\fg)|^{-1})$.

\begin{lemma}\label{Lie-group-coinv-lem}
Assume $G=\GL_N$ and $F$ is archimedean. Then there are natural surjective maps
$$\SS(X,|L|^{\kappa})_{\fg}\to \SS(X,|L|^{\kappa})_{G(F)},$$
$$\SS^{hol}(X\times D,|L|^{\kappa})_{\fg}\to \SS^{hol}(X\times D,|L|^{\kappa})_{G(F)},$$
where we use the usual algebraic coinvariants with respect to the Lie algebra $\fg$.
%one has
%$$\SS(X(F),|L|^{\kappa})_{G(F)_0}\simeq 
%\SS(X(F),|L|^{\kappa}\ot |{\bigwedge}^{top}(\fg)|^{-1}))_{\fg},$$
%where $G(F)_0\sub G(F)$ is the connected component of $1$, and
%we use coinvariants with respect to the action of the Lie algebra $\fg$ of $G$ (defined using the closure of the span of $\xi\cdot v$, for $\xi\in\fg$).
%Similarly,
%$$\SS^{hol}(X(F)\times D,|L|^{\kappa})_{G(F)_0}\simeq 
%\SS^{hol}(X(F)\times D,|L|^{\kappa}\ot |{\bigwedge}^{top}(\fg)|^{-1}))_{\fg},$$
\end{lemma}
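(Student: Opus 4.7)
The plan is to factor the desired map through a canonical $G(F)$-equivariant identification that trivializes the twist by $|{\bigwedge}^{top}(\fg)|^{-1}$, and then to descend the identity map on $\SS(X,|L|^{\kappa})$ from Lie-algebra coinvariants to topological group coinvariants using smoothness of the Schwartz representation. The key observation is that for $G=\GL_N$ the adjoint representation satisfies $\det(\operatorname{Ad}(g))\equiv 1$ and $\tr(\operatorname{ad}(X))\equiv 0$, so the one-dimensional complex line $|{\bigwedge}^{top}(\fg)|^{-1}$ carries the trivial $G(F)$-action and trivial $\fg$-action. Choosing a Haar measure on $\fg$ thus produces a canonical isomorphism of $G(F)$-modules (and of $\fg$-modules)
$$\SS(X,|L|^{\kappa}\ot|{\bigwedge}^{top}(\fg)|^{-1})\simeq \SS(X,|L|^{\kappa}),$$
and similarly in the holomorphic setting, reducing the statement to a comparison of the two coinvariant quotients on the single space $\SS(X,|L|^{\kappa})$.

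Given this identification, it suffices to show that the identity map on $\SS(X,|L|^{\kappa})$ descends to a surjection $\SS(X,|L|^{\kappa})_{\fg}\twoheadrightarrow \SS(X,|L|^{\kappa})_{G(F)}$. Well-definedness amounts to checking that every element $Xv$ with $X\in\fg$ and $v\in\SS(X,|L|^{\kappa})$ lies in the closure of the span of $\{gv'-v'\mid g\in G(F),\ v'\in\SS(X,|L|^{\kappa})\}$. This will follow from the standard fact that the $G(F)$-action on the Schwartz Fr\'echet space is smooth, so that the orbit map $t\mapsto\exp(tX)v$ is differentiable in the Schwartz topology, yielding
$$Xv=\lim_{t\to 0}\tfrac{1}{t}\bigl(\exp(tX)v-v\bigr)$$
as an element of the closed subspace being quotiented out. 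Surjectivity of the resulting map on coinvariants is then automatic, since the quotient $\SS(X,|L|^{\kappa})\to\SS(X,|L|^{\kappa})_{G(F)}$ factors through $\SS(X,|L|^{\kappa})_{\fg}$.

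For the holomorphic variant the same argument applies verbatim once one knows that the $G(F)$-action on $\SS^{hol}(X\times D,|L|^{\kappa})$ is smooth in its natural topology; I would deduce this from smoothness on $\SS(X,|L|^{\kappa})$ via the nuclear tensor product isomorphism \eqref{tensor-product-eq}, using that $G(F)$ acts trivially on $\HH(D)$. The only mild point to track throughout is the bookkeeping around the twist by $|{\bigwedge}^{top}(\fg)|^{-1}$, and this is precisely what the unimodularity of the adjoint representation of $\GL_N$ takes care of; no further obstacle arises.
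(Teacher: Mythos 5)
Your proposal is correct and follows essentially the same route as the paper: the paper's entire proof is the observation that $\fg$ acts trivially on the $G(F)$-coinvariants (defined via the closed span of $gv-v$), which is exactly what you establish by writing $Xv=\lim_{t\to 0}\tfrac{1}{t}(\exp(tX)v-v)$ using smoothness of the action. Your explicit handling of the trivial twist $|{\bigwedge}^{top}(\fg)|^{-1}$ via unimodularity of the adjoint representation of $\GL_N$ is a detail the paper leaves implicit, but it is the right justification.
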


\begin{proof}
%Both quotients of $\SS(X(F),|L|^{\kappa}\ot |{\bigwedge}^{top}(\fg)|^{-1})$ (resp., $\SS^{hol}(X(F)\times D,|L|^{\kappa}\ot |{\bigwedge}^{top}(\fg)|^{-1})$) are universal with respect to $G(F)$-equivariant continuous maps to trivial representations of $G(F)$.
This follows from the fact that $\fg$ acts trivially on $\SS(X,|L|^{\kappa})_{G(F)}$ (resp., $\SS^{hol}(X\times D,|L|^{\kappa})_{G(F)}$).
\end{proof}

%In the non-archimedean case the subspace spanned by $gv-v$ is closed???

%An appropriate version of the above definition also makes sense for an admissible smooth stack $\XX$, see \cite{Sak}.
%For $\XX=[X/G]$, where $G=\GL_N$, this space can also be computed in terms of coinvariants.

\subsection{Borel's theorem with a holomorphic parameter}

Here we assume that $F=\R$ or $\C$. 
Let $X$ be a smooth $F$-variety, $Z\sub X$ a smooth closed subvariety, and let $N_Z$ be the normal bundle to $Z$ in $X$.
Let us consider the decreasing filtration on $\SS^{hol}(X\times D,E_{\kappa})$ defined as follows:
$F_Z^i\SS^{hol}(X\times D,E_{\kappa})$
consists of $f(x,\kappa)$ such that for any algebraic differential operator $D$ of order $\le i-1$ defined
on an open $U\sub X$, one has $Df|_{U\cap Z}=0$.

\begin{lemma}\label{filtr-lem}
One has
$$F_Z^i\SS^{hol}(X\times D,E_{\kappa})/F_Z^{i+1}\SS^{hol}(X\times D,E_{\kappa})\simeq \SS^{hol}(Z\times D,S^iN_Z^\vee\ot E|_Z).$$
\end{lemma}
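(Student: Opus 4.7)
The plan is to reduce the statement to a purely local computation in tubular coordinates around $Z$ and then invoke a Borel-type extension theorem with holomorphic parameter.

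First, using a tempered partition of unity on $X$ (as in \cite[Thm.\ 5.2.1]{AG}, together with its trivial non-archimedean analogue), I would localize the claim to the case where $V$, $L_1$, $L_2$ are trivial and where one has algebraic étale coordinates identifying a neighborhood of a point of $Z$ in $X$ with $Z\times\A^r$ (where $r=\codim_X Z$), with $Z$ corresponding to $Z\times\{0\}$. Since both the filtration $F_Z^\bullet$ and the target space on the right are local and transform correctly under tempered changes of trivializations (the transition functions of $|L_1|^\kappa\ot|L_2|$ depend holomorphically and temperedly on $\kappa$), this reduction is harmless. In these coordinates, $N_Z^\vee$ becomes trivial and $S^iN_Z^\vee$ identifies with degree $i$ homogeneous polynomials in the transverse coordinates $y_1,\dots,y_r$.

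Next, I would define the canonical map
$$F_Z^i\SS^{hol}(X\times D,E_{\kappa})\to \SS^{hol}(Z\times D,S^iN_Z^\vee\ot E|_Z)$$
by sending a section $f(x,\kappa)$ to its order-$i$ normal Taylor coefficient along $Z$, i.e. the collection of restrictions $\frac{1}{\alpha!}(\partial_y^\alpha f)|_{Z\times D}$ for $|\alpha|=i$, which patch together into a Schwartz section of $S^iN_Z^\vee\ot E|_Z$ depending holomorphically on $\kappa$ by the trivial direction of the argument. Well-definedness and the fact that the kernel is exactly $F_Z^{i+1}\SS^{hol}(X\times D,E_{\kappa})$ are built into the definition of the filtration: asking that all operators of order $\le i-1$ kill $f$ on $Z$ is precisely the vanishing of the first $i$ Taylor coefficients, and an invariant characterization shows that modulo $F_Z^{i+1}$ only the homogeneous degree $i$ part survives. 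Injectivity is then immediate.

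For surjectivity, the core input is an extension-from-$Z$ construction for Schwartz sections with holomorphic parameter. In the non-archimedean setting this is elementary: locally constant sections can be extended by a characteristic-function cutoff and the normal direction is trivialized, giving the result directly. In the archimedean case I would use the Schwartz variant of Borel's theorem from \cite{AG}/\cite{AGS}: there exists a continuous linear section
$$B:\SS(Z,S^iN_Z^\vee\ot E|_Z)\to \SS(X,E)$$
of the $i$-th Taylor coefficient map, obtained by multiplying a given symmetric tensor-valued Schwartz section on $Z$ by $y^\alpha/\alpha!$ times a fixed Schwartz cutoff $\chi(y)$ in the normal direction. Tensoring with $\HH(D)$ and using the isomorphism \eqref{tensor-product-eq} together with nuclearity of the Frechet spaces involved (so that $\hat\ot$ is exact on the relevant short exact sequences), one obtains a continuous section
$$B_D:\SS^{hol}(Z\times D,S^iN_Z^\vee\ot E|_Z)\to \SS^{hol}(X\times D,E_{\kappa})$$
landing in $F_Z^i$ and lifting the Taylor coefficient map. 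This proves surjectivity modulo $F_Z^{i+1}$.

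The main obstacle I expect is honestly treating the $\kappa$-dependence in the archimedean surjectivity step: one must check that the explicit Borel-type extension $B$ is a continuous linear operator between the appropriate nuclear Frechet Schwartz spaces, and that tensoring with $\HH(D)$ preserves the surjectivity of $F_Z^i/F_Z^{i+1}\to \SS^{hol}(Z\times D,S^iN_Z^\vee\ot E|_Z)$. This is precisely where the tensor-product description \eqref{tensor-product-eq} (and its vector-bundle version, assembled from a trivializing cover using the tempered partition of unity) is indispensable; once that formalism is in place, the rest of the argument is the standard local computation.
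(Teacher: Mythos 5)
Your proposal is correct and follows essentially the same route as the paper's proof: reduction by tempered partition of unity and local \'etale coordinates to $Z=\A^k\sub\A^n\times\A^k$ with trivial bundles, identification of the associated graded piece with degree-$i$ homogeneous polynomials in the normal coordinates with coefficients in $\SS^{hol}(Z\times D)$, and surjectivity by exhibiting explicit lifts. You are in fact slightly more careful than the paper at the last step, where you insert a Schwartz cutoff $\chi(y)$ so that the lift genuinely lies in $\SS^{hol}(X\times D,E_\kappa)$ — a point the paper's one-line surjectivity argument glosses over.
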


\begin{proof} It is clear that the natural map from the left-hand side to the right-hand side is injective. Thus, we need to prove surjectivity.
Using partition of unity and local \'etale coordinates, we can reduce to the case of a trivial bundle $E$ and of an embedding $Z=\A^k\sub \A^n\times \A^k=X$.
Let us partition coordinates on $X$ as $(y,z)$ where $z$ are coordinates on $Z$. Then we can identify $\SS^{hol}(Z\times D,S^iN_Z^\vee)$ with
functions $f(y,z,\kappa)$ which are homogeneous polynomial of degree $i$ in $y$, with coefficients that are in $\SS^{hol}(Z\times D)$. But we can view such $f$ as elements
of $F_Z^i\SS^{hol}(X\times D,E_{\kappa})$.
\end{proof}

We need the following analog of Borel's theorem.

\begin{prop}\label{Borel-thm-prop}
For each open subset $D\sub \C$ let us consider the map
$$\tau_D:\SS^{hol}(X\times D,E_{\kappa})/\SS^{hol}((X-Z)\times D,E_{\kappa})\to \varprojlim \SS^{hol}(X\times D,E_{\kappa})/F_Z^i\SS^{hol}(X\times D,E_{\kappa}).$$
Then it is injective,
%is an isomorphism.
%(where $D\sub \C$) induces an isomorphism of sheaves over $\C$.
and for any open $D'$ such that $\ov{D'}$ is compact and is contained in $D$, the natural restriction map
\begin{equation}\label{completion-res-map}
\varprojlim \SS^{hol}(X\times D,E_{\kappa})/F_Z^i\SS^{hol}(X\times D,E_{\kappa})\to \varprojlim \SS^{hol}(X\times D',E_{\kappa})/F_Z^i\SS^{hol}(X\times D',E_{\kappa})
\end{equation}
factors through the image of $\tau_{D'}$.
\end{prop}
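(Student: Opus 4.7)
The plan is to reduce both assertions to the classical (no-parameter) Schwartz theory via the tensor-product decomposition \eqref{tensor-product-eq}, handling the holomorphic parameter by a closed-subspace argument for injectivity, and by Cauchy estimates on $\ov{D'}\Subset D$ for the Borel construction. Using a tempered partition of unity and the local \'etale-coordinate reduction as in the proof of Lemma \ref{filtr-lem}, I would pass to the model $X = \A^n \times Z$, with $Z = \{0\}\times Z$ the zero locus of the normal coordinates $y$, and $E_{\kappa}$ trivial, so that $\SS^{hol}(X\times D, E_{\kappa}) \cong \SS(X)\hat{\otimes}\HH(D)$ in the archimedean case.

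For injectivity of $\tau_D$, I would take $f \in \bigcap_i F_Z^i \SS^{hol}(X\times D, E_{\kappa})$. For each $\kappa_0 \in D$, the evaluation $f(\cdot,\kappa_0)\in\SS(X,E_{\kappa_0})$ vanishes to infinite order on $Z$, hence lies in $\SS(X-Z, E_{\kappa_0})$ by the classical Schwartz extension-by-zero theorem (\cite{AG}). Since $\SS(X-Z)$ is the common kernel of the continuous family of normal-jet maps along $Z$, it is closed in $\SS(X)$, and the inclusion $\SS(X-Z)\hra\SS(X)$ is a topological embedding by the open mapping theorem. Therefore the holomorphic map $\kappa\mapsto f(\cdot,\kappa)$ into $\SS(X)$ is also holomorphic into $\SS(X-Z)$, so $f \in \SS^{hol}((X-Z)\times D, E_{\kappa})$.

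For the second assertion, start with a compatible system $(\ov{f}_i) \in \varprojlim \SS^{hol}(X\times D, E_{\kappa})/F_Z^i$ and lift it to a sequence $f_i \in \SS^{hol}(X\times D, E_{\kappa})$ with $f_{i+1}-f_i \in F_Z^i$. By Lemma \ref{filtr-lem}, $f_{i+1}-f_i$ may be represented, modulo $F_Z^{i+1}$, by an element of $\SS^{hol}(Z\times D, S^i N_Z^\vee\ot E|_Z)$; in local normal coordinates I may take it to be a homogeneous polynomial of degree $i$ in $y$ with Schwartz-in-$z$, holomorphic-in-$\kappa$ coefficients. The standard Borel construction then produces
$$F := f_0 + \sum_{i\ge 0} \chi\!\left(\tfrac{|y|}{\eps_i}\right)(f_{i+1}-f_i),$$
where $\chi\in C_c^\infty$ equals $1$ near $0$ and $\eps_i\downarrow 0$ is chosen so that the series converges in $\SS^{hol}(X\times D', E_{\kappa})$. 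A direct comparison, splitting the tail at index $n$ and noting that $(\chi_i-1)$ is supported away from $Z$, gives $F \equiv f_n \pmod{F_Z^n}$ on $X\times D'$ for every $n$, so the class of $F$ under $\tau_{D'}$ equals the restriction of $(\ov{f}_i)$ to $D'$.

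The main obstacle will be arranging the $\eps_i$ so that the Borel series converges in the Fr\'echet topology of $\SS^{hol}(X\times D', E_{\kappa})$. The classical Borel estimate gives an $\eps_i^{i-|\beta|}$-type gain against $x$-derivatives of order $|\beta|<i$, making each Schwartz seminorm summable for suitable $\eps_i$, but here the seminorms also involve suprema and $\partial_{\kappa}$-derivatives over compacta $K\subset D'$. This is precisely where $\ov{D'}\Subset D$ is indispensable: Cauchy's integral formula applied to a thin annulus in $D$ surrounding $\ov{D'}$ bounds all $\partial_{\kappa}$-derivatives of a holomorphic Schwartz family on $D'$ by suprema of fixed Schwartz seminorms on a slightly larger compact still contained in $D$, so the summability reduces to the classical Borel estimate seminorm-by-seminorm.
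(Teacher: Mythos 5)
Your proposal is correct and follows essentially the same route as the paper: injectivity via the fixed-$\kappa$ characterization of $\SS(X-Z)$ inside $\SS(X)$ upgraded to the holomorphic family (the closed-subspace/open-mapping step is exactly what makes the paper's one-line reduction work), and surjectivity onto the image of $\tau_{D'}$ via the reduction to local normal coordinates, Lemma \ref{filtr-lem}, and the standard Borel cutoff construction, with $\ov{D'}\Subset D$ used to control the $\kappa$-derivatives. Your write-up simply supplies the details the paper leaves to ``the standard proof of Borel's theorem.''
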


\begin{proof} 
%First we observe, that for each $i$, the subspaces $F_Z^i\SS^{hol}(X\times D,E_{\kappa})\sub \SS^{hol}(X\times D,E_{\kappa})$, for each $D\sub \C$, define a subsheaf over $\C$.
First, we claim that $\SS^{hol}((X-Z)\times D,E_{\kappa})$ is exactly the kernel of the map
%\begin{equation}\label{formal-exp-map}
$$\wt{\tau}_D:\SS^{hol}(X\times D,E_{\kappa})\to \varprojlim \SS^{hol}(X\times D,E_{\kappa})/F_Z^i\SS^{hol}(X\times D,E_{\kappa}).$$
%\end{equation}
Indeed, this immediately follows from the similar statement for a fixed $\kappa$.

%Should replace $\SS^{hol}(X\times D,E_{\kappa})/F_Z^i\SS^{hol}(X\times D,E_{\kappa})$ by the associated sheaf???
Thus, it remains to check that the map \eqref{completion-res-map} factors through the image of $\wt{\tau}_{D'}$.
%the map \eqref{formal-exp-map} induces a surjective map on stalks at any $\kappa_0$.
Using partition of unity and local \'etale coordinates, we reduce to the case of trivial $E$ and of an embedding $Z=\A^k\sub \A^n\times \A^k=X$.
Then the element of $\varprojlim \SS^{hol}(X\times D,E_{\kappa})/F_Z^i\SS^{hol}(X\times D,E_{\kappa})$
%the right-hand side can be identified with 
can be viewed as a formal series 
$$f(y,z,\kappa)=\sum_{i\ge 0}f_i(y,z,\kappa),$$
where $f_i(y,z,\kappa)$ is a homogeneous polynomial of degree $i$ in $y$, with coefficients that are in $\SS^{hol}(Z\times D)$.
%We can assume that $D$ is a disk of radius $r$ around $\kappa_0$. 
We claim that there exists then $F(y,z,\kappa)\in \SS^{hol}(X\times D')$,
%where $D'$ is a disk of radius $r'<r$ around $\kappa_0$, 
such that the formal expansion of $F$ in $y$ is equal to $f|_{D'}$.
Indeed, this follows from the standard proof of Borel's theorem. 
\end{proof}

%Here is the non-archimedean analog of Proposition \ref{Borel-thm-prop} (proved using local \'etale coordinates in which $Z\sub X$ has standard form).

%\begin{prop}\label{nonarch-Borel-prop} For $F$ nonarchimedean, and for $L_{\kappa}=|L_1|^{\kappa}\ot |L_2|$,
%the natural map 
%$$\SS^{hol}(X\times D,L_{\kappa})/\SS^{hol}((X-Z)\times D,L_{\kappa})\to \SS^{hol}(Z\times D,E_{\kappa}|_Z)$$
%is an isomorphism for any open $D\sub \C$.
%\end{prop}

\section{Coinvariants for Schwartz sections: non-archimedean case}\label{coinv-nonarch-sec}

\subsection{Homology for smooth representations of $l$-groups}

We use the setup of $l$-spaces and $l$-groups from \cite{BZ} (see also \cite{Bern}).

Let $G$ be an $l$-group, $\HH(G)$ the Hecke algebra of locally constant compactly supported distributions on $G$,
$\MM(G)$ the category of smooth representations of $G$. Recall that it is equivalent to the category $\MM(\HH(G))$ of nondegenerate left $\HH(G)$-modules 
(see \cite[Sec.\ 2.1]{Bern}).
It is known that this category has enough projectives (see \cite[Sec.\ 2.2]{Bern}).

\begin{lemma}\label{Hecke-proj-lem}
$\HH(G)$ is projective as a left $\HH(G)$-module.
%a projective object of $\MM(\HH(G))$.
\end{lemma}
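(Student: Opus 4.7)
The plan is to decompose $\HH(G)$ as a direct sum of cyclic projective left modules generated by idempotents, exploiting the standard approximate identity in $\HH(G)$ built from normalized characteristic functions of compact open subgroups. This bypasses the difficulty caused by $\HH(G)$ not having a unit.

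First, I would choose a decreasing sequence $K_1 \supset K_2 \supset \ldots$ of compact open subgroups of $G$ forming a fundamental system of neighborhoods of the identity (which exists since $G$ is an $l$-group). Set $e_n := \vol(K_n)^{-1} \mathbf{1}_{K_n} \in \HH(G)$ and put $p_1 := e_1$, $p_n := e_n - e_{n-1}$ for $n \geq 2$. A direct convolution computation shows $e_n * e_m = e_{\min(n,m)}$ (the larger of the two groups, i.e., the smaller index), from which it follows that $\{p_n\}_{n \geq 1}$ is a family of mutually orthogonal idempotents satisfying the telescoping identity $\sum_{n=1}^N p_n = e_N$.

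Next, I would establish the direct sum decomposition
\[
\HH(G) = \bigoplus_{n \geq 1} \HH(G) p_n
\]
as left $\HH(G)$-modules. Any $\phi \in \HH(G)$ is right-$K$-invariant for some compact open $K \subset G$, and since $K_N \subset K$ for all sufficiently large $N$, one gets $\phi = \phi * e_N \in \HH(G) e_N$; combined with the identity $\HH(G) e_N = \sum_{n=1}^N \HH(G) p_n$, this yields $\HH(G) = \sum_n \HH(G) p_n$. Directness follows by right-multiplying any relation $\sum h_n p_n = 0$ by $p_m$ and using the orthogonality $p_n p_m = \delta_{nm} p_n$ to conclude $h_m p_m = 0$ for each $m$.

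Finally, I would verify that each summand $\HH(G) p_n$ is projective in the category of non-degenerate $\HH(G)$-modules. Since $p_n = p_n^2 \in \HH(G) p_n$, evaluation at $p_n$ gives a natural isomorphism $\Hom_{\HH(G)}(\HH(G) p_n, V) \cong p_n V$: it is injective because $\HH(G)$-linearity forces $\phi(h p_n) = h \phi(p_n)$, and surjective because any $v \in p_n V$ satisfies $p_n v = v$, so the assignment $h p_n \mapsto h v$ is well-defined on $\HH(G) p_n$. The functor $V \mapsto p_n V$ is exact because $p_n$ is an idempotent: surjectivity onto $p_n V''$ follows by lifting and applying $p_n$, and exactness in the middle from the fact that $p_n v \in V'$ implies $p_n v = p_n(p_n v) \in p_n V'$. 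Since arbitrary direct sums of projectives are projective, the lemma follows.

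The main obstacle is largely bookkeeping around the non-unital nature of $\HH(G)$: one must check carefully that $\HH(G) p_n$ (built via right multiplication by $p_n$) is a genuine left submodule, that the decomposition above is algebraically direct rather than a mere filtered union, and that $p_n \in \HH(G) p_n$ so that evaluation at $p_n$ makes sense. Once these points are settled, the argument reduces to the standard fact that $Re$ is projective for an idempotent $e$ in a (possibly non-unital) ring $R$.
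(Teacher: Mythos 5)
Your proof is correct, but it takes a genuinely different route from the paper's. The paper argues functorially: it writes $\HH(G)=\varinjlim_K \HH(G)e_K$, identifies $\Hom_{\HH(G)}(\HH(G),M)$ with $\varprojlim_K e_KM$, observes that the transition maps $e_KM\to e_{K'}M$ are surjective, and invokes the Mittag--Leffler condition to conclude that $\Hom_{\HH(G)}(\HH(G),-)$ is exact. You instead exhibit $\HH(G)$ explicitly as an internal direct sum $\bigoplus_n \HH(G)p_n$ of cyclic modules generated by the mutually orthogonal idempotents $p_n=e_n-e_{n-1}$, and then check directly that each $\HH(G)p_n$ is projective in the category of nondegenerate modules via the isomorphism $\Hom_{\HH(G)}(\HH(G)p_n,V)\simeq p_nV$. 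All the computations you cite ($e_n\ast e_m=e_{\min(n,m)}$, orthogonality and idempotency of the $p_n$, directness of the sum, exactness of $V\mapsto p_nV$) are correct. Your argument buys slightly more than the lemma asserts: it realizes $\HH(G)$ as a direct sum of (differences of) the standard projective generators rather than just proving abstract projectivity, and it avoids any appeal to $\varprojlim$-exactness. Both arguments rest on the same implicit hypothesis, namely the existence of a countable decreasing cofinal chain of compact open subgroups (you need it to choose the $K_n$; the paper needs it because the Mittag--Leffler criterion for exactness of $\varprojlim$ is a statement about sequential inverse systems). This holds for the groups relevant to the paper, which are $F$-points of linear algebraic groups over a non-archimedean local field, hence second countable.
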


\begin{proof} $\HH(G)$ is the direct limit of the projective modules $\HH(G)e_K$, where $e_K$ is the idempotent corresponding to a compact open subgroup $K\sub G$.
Hence, for any $M\in \MM(G)$ we have
$$\Hom_{\HH(G)}(\HH(G),M)\simeq \varprojlim_K \Hom(\HH(G)e_K,M)=\varprojlim_K e_KM,$$
where the maps $e_KM\to e_{K'}M$ for $K\sub K'$ are given by the multiplication with $e_{K'}$. But these maps are surjective (since $e_{K'}=e_{K'}e_K$),
so for an exact sequence $0\to M_1\to M_2\to M_3\to 0$ we get an exact sequence of inverse systems satisfying the Mittag-Leffler condition, which implies
that the functor $\Hom_{\HH(G)}(\HH(G),\cdot)$ is exact.
\end{proof}

For a smooth $G$-representation $M$, we denote by $M_G$ the space of $G$-coinvariants.
The integration defines a homomorphism $\HH(G)\to \C$, and we view $\C$ as a right $\HH(G)$-module via this homomorphism.
%\begin{lemma}\label{coinv-tensor-lem}
It is easy to see that for $M\in \MM(G)$ there is a natural isomorphism
\begin{equation}\label{coinv-tensor-eq}
M_G\simeq \C\ot_{\HH(G)} M.
\end{equation}
%\end{lemma}

\begin{definition}
We define smooth homology $H^{sm}_i(G,M)$ of $G$ with coefficients in $M\in \MM(G)$ as the left derived functors of the functor of coinvariants $M\mapsto M_G$
on the category $\MM(G)$.
\end{definition}

It follows from \eqref{coinv-tensor-eq} that 
$$H^{sm}_i(G,M)\simeq \Tor_i^{\HH(G)}(\C,M).$$

\begin{example}
If $G$ is a union of compact subgroups then the functor $M\mapsto M_G$ is exact, so $H^{sm}_{>0}(G,M)=0$.
\end{example}

\begin{lemma}\label{sm-hom-char-lem}
Let $F$ be a local non-archimedean field, $G=F^*\ltimes F^n$, where $F^*$ acts linearly on $F^n$.
Let $\chi:F^*\to \C^*$ be a character, trivial on some open subgroup of $F^*$. We denote by $\C_\chi$ the smooth $1$-dimensional
representation of $G$, where $\chi$ extends to $G$ via the projection $G\to F^*$. Assume $\chi\neq 1$. Then
$$H^{sm}_*(G,\C_\chi)=0.$$
\end{lemma}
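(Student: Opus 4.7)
The plan is to reduce $H^{sm}_*(G,\C_\chi)$ to a rank-one Koszul computation, by successively stripping off normal subgroups that are unions of compact open subgroups (for which coinvariants is exact, by the Example preceding the lemma). Fix a uniformizer $\pi\in F^*$, so that $F^*\cong\OO_F^*\times\pi^{\Z}$, and set $N:=F^n\rtimes\OO_F^*\triangleleft G$, with $G/N\cong\Z$. Since $\OO_F^*$ is compact, its linear action on $F^n$ preserves some $\OO_F$-lattice $L\subset F^n$, and $F^*$ being abelian means each scaling $\pi^{-k}L$ is again $\OO_F^*$-stable; these exhaust $F^n$, so $N$ is the union of the compact open subgroups $(\pi^{-k}L)\rtimes\OO_F^*$. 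Hence $(-)_N$ is exact and $H^{sm}_{>0}(N,\cdot)=0$; the Grothendieck (Hochschild--Serre) spectral sequence associated with the composition $(-)_G=(-)_{\Z}\circ(-)_N$ therefore degenerates and yields
$$H^{sm}_*(G,\C_\chi)\;\cong\;H^{sm}_*\!\bigl(\Z,\,(\C_\chi)_N\bigr).$$

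Next I would compute $(\C_\chi)_N$. As $F^n\subset\ker\chi$, this equals $(\C_\chi)_{\OO_F^*}$. If $\chi|_{\OO_F^*}\neq 1$ the latter is already zero, by averaging over the finite quotient of $\OO_F^*$ through which $\chi|_{\OO_F^*}$ factors, and we are done. Otherwise $\chi$ descends to a character $\chi_\Z:\Z\to\C^*$ which is nontrivial (because $\chi\neq 1$), and $(\C_\chi)_N=\C_{\chi_\Z}$. For the remaining rank-one calculation, apply the Koszul resolution
$$0\to\HH(\Z)\xrightarrow{\,\cdot(t-1)\,}\HH(\Z)\to\C\to 0$$
of the trivial right $\HH(\Z)=\C[t,t^{-1}]$-module; tensoring with $\C_{\chi_\Z}$ collapses it to a two-term complex whose differential is multiplication by $\chi_\Z(1)-1\neq 0$, whence $\Tor^{\HH(\Z)}_*(\C,\C_{\chi_\Z})=0$.

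The main obstacle I foresee is a clean justification of the Grothendieck/Hochschild--Serre spectral sequence in the smooth $l$-group setting: concretely, one needs to check that $(-)_N$ carries projective $\HH(G)$-modules to $(-)_{\Z}$-acyclic $\HH(\Z)$-modules. This can be verified on the generating projectives $\HH(G)e_K$ (with $K\subset G$ compact open), whose $N$-coinvariants, using the splitting $G\cong N\rtimes\Z$ coming from the uniformizer, can be identified with free (hence $\HH(\Z)$-acyclic) $\C[\Z]$-modules.
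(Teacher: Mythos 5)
Your proof is correct and follows essentially the same route as the paper: strip off the part of $G$ that is a union of compact open subgroups (where coinvariants is exact), reduce to a nontrivial character of $\Z$, and conclude with the Koszul resolution of $\C$ over $\C[t,t^{-1}]$. The only difference is organizational — you remove $F^n\rtimes\OO_F^*$ in one step and are more explicit about justifying the composite-functor spectral sequence, whereas the paper first removes $F^n$, then passes to the discrete quotient $F^*/U$ and handles the finite part separately.
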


\begin{proof}
Since $F^n$ is a union of compact subgroups, this reduces to the case $G=F^*$. Moreover, we can replace $G$ by
its quotient $G_0$ by an open compact subgroup. Note that $G_0$ is an extension of $\Z$ by a finite commutative group $K$,
and we just need to compute the usual homology $H_*(G_0,\C_\chi)$. If $\chi|_K\neq 1$, the assertion follows from the spectral sequence.
In the case $\chi|_K=1$, we reduce to the case $G_0=\Z$. Then we immediately compute the homology using the resolution 
$$0\to \C[t,t^{-1}]\rTo{t-1}\C[t,t^{-1}]\to \C\to 0$$
of the trivial module over $\C[\Z]=\C[t,t^{-1}]$.
\end{proof}

Now let $H\sub G$ be a closed subgroup. We denote by
$$\ind_H^G:\MM(H)\to \MM(G)$$
the functor of compact induction. It is known that it is an exact functor.

Note that $\HH(G)$ has a natural $\HH(G)$-bimodule structure, hence, we can view it
as an $\HH(G)-\HH(H)$-bimodule.

For a locally compact group $H$ we denote by $\De_H$ the {\it modular character} $H\to \R^*_{>0}$ (see \cite[0.5]{Cartier}), \cite[Sec.\ 1.19]{BZ}).
In the case when $H=\und{H}(F)$, where $F$ is a local field, and $\und{H}$ is a linear algebraic group over $F$,
we have an {\it algebraic modular character} $\De^{alg}_H:H\to \G_m$
given by the adoint action of $H$ on ${\bigwedge}^{top}\Lie(H)^*$. Then $\De_H$ is the composition of the $\De^{alg}_H(F):H(F)\to F^*$ with $|\cdot|:F^*\to \R^*_{>0}$.
%$\fh$ is the Lie algebra of $H$ (it also coincides with the modular character of $H(F)$).

\begin{lemma}\label{ind-tensor-lem}(\cite[Thm.\ 1.4]{Cartier})
For a closed subgroup $H\sub G$ and $M\in \MM(H)$, one has a natural isomorphism
$$\ind_H^G(M)\simeq \HH(G)\ot_{\HH(H)} (M\cdot (\De_H/\De_G|_H)),$$
\end{lemma}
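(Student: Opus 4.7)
The plan is to construct an explicit $\HH(G)$-linear natural transformation
$$\Phi:\HH(G)\ot_{\HH(H)}(M\cdot\chi)\to \ind_H^G(M), \qquad \chi=\De_H/\De_G|_H,$$
and verify it is a bijection, following Cartier's original argument. I first fix right Haar measures on $G$ and $H$, which turn $\HH(G)$ and $\HH(H)$ into convolution algebras and convert the right $\HH(H)$-module structure on $\HH(G)$ into concrete integration. The map $\Phi$ is then given on generators by a formula of the shape
$$\Phi(\phi\ot m)(g)=\int_H \phi(gh)(h\cdot m)\,d\mu(h),$$
where $d\mu$ is a Haar measure on $H$, and the modular factor $\chi=\De_H/\De_G|_H$ enters because of the mismatch between the restriction of the right Haar measure on $G$ to an $H$-coset and the chosen Haar measure on $H$.

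First, I would verify that $\Phi(\phi\ot m)$ genuinely lies in $\ind_H^G M$: compact support of $\phi$ together with smoothness of $m$ yield an integrand compactly supported in $h$, and the resulting function of $g$ is smooth, compactly supported modulo $H$, and transforms correctly under $H$-translations, precisely because of the twist by $\chi$ in the $\HH(H)$-action on $M\cdot\chi$. Second, I would check that $\Phi$ descends to the tensor product over $\HH(H)$, i.e., $\Phi((\phi\cdot\psi)\ot m)=\Phi(\phi\ot(\psi\cdot m))$ for $\psi\in\HH(H)$; this is a direct Fubini computation using the convolution formulas for the right $\HH(H)$-action on $\HH(G)$ and for the $\HH(H)$-action on $M\cdot\chi$.

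Third, I would show $\Phi$ is bijective. For surjectivity, given $f\in\ind_H^G M$ I pick a compact open subgroup $K\sub G$ leaving $f$ right $K$-invariant, cover the compact image of $\supp f$ in $H\backslash G$ by finitely many cosets with representatives $g_1,\ldots,g_r$, and observe that $f$ is an explicit finite sum $\sum \Phi(\mathbf{1}_{g_iK}\ot f(g_i))$ up to a normalizing volume factor. For injectivity, given a finite sum $\sum\phi_i\ot m_i$ in the kernel, a partition-of-unity refinement so that each $\phi_i$ is supported in a single $H$-orbit neighborhood of a coset representative reduces each summand to a form where the defining $\HH(H)$-relation on the tensor product forces it to vanish.

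The main obstacle is keeping track of all the modular twists consistently. Cartier's theorem is fundamentally a reformulation of the relation between Haar measures on $G$, $H$, and $H\backslash G$ encoded by $\De_H/\De_G$, but deriving that exactly $\chi=\De_H/\De_G|_H$ (rather than its inverse, a different power, or a square root) forces one to be disciplined about left versus right Haar measures and about the conventions for ``left'' compact induction versus ``right'' convolution module structures. Once these conventions are fixed in agreement with \cite{Cartier}, each of the three verification steps becomes a direct check.
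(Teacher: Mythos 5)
The paper offers no proof of this lemma: it is quoted verbatim as \cite[Thm.\ 1.4]{Cartier}, so there is nothing internal to compare your argument against. Your outline — an explicit intertwiner $\Phi(\phi\ot m)(g)=\int_H\phi(gh)(h\cdot m)\,d\mu(h)$, well-definedness over $\HH(H)$ by Fubini, surjectivity via right-$K$-invariance and finitely many coset representatives, injectivity via refining supports to single cosets, with the factor $\De_H/\De_G|_H$ arising from the comparison of Haar measures — is precisely the standard proof of Cartier's theorem, and the steps you identify are the right ones.
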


%\begin{proof}
%By definition, $\ind_H^G(M)$ is the space of functions $f:G\to M$ such that $f(hg)=hf(g)$, right invariant under some compact subgroup in $G$ and
%with compact support modulo $H$. We have a natural map
%$$\HH(G)\ot_{\HH(H)} (M\cdot (\De_H/\De_G))\to \ind_H^G(M)$$
%sending ??? 
%\end{proof}

\begin{lemma}\label{ind-proj-lem}
The functor $\ind_H^G$ takes projectives to projectives.
\end{lemma}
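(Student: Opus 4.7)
The plan is to prove this via an adjoint functor argument: $\ind_H^G$ is left adjoint to an exact functor, and any left adjoint of an exact functor automatically preserves projectives. The only real work is in identifying the right adjoint.

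First, I would establish Frobenius reciprocity. Using Lemma \ref{ind-tensor-lem}, one has $\ind_H^G(M) \simeq \HH(G)\ot_{\HH(H)}(M\cdot\chi)$ with $\chi=\De_H/\De_G|_H$. For $N\in\MM(G)$, the standard Hom-tensor adjunction for modules over associative algebras gives
$$\Hom_{\HH(G)}\bigl(\HH(G)\ot_{\HH(H)}(M\cdot\chi),\,N\bigr)\simeq \Hom_{\HH(H)}(M\cdot\chi,\,N),$$
where on the right $N$ is viewed as an $\HH(H)$-module by restriction. Absorbing $\chi$ into $N$, this reads
$$\Hom_G(\ind_H^G M,\,N)\simeq \Hom_H\bigl(M,\,\Res_H^G N\cdot\chi^{-1}\bigr),$$
so the right adjoint of $\ind_H^G$ is the functor $R\colon N\mapsto \Res_H^G N\cdot\chi^{-1}$.

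Next I would observe that $R$ is exact: the restriction functor $\Res_H^G$ does not change the underlying complex vector space, hence preserves short exact sequences, and tensoring with a one-dimensional character is an invertible operation on vector spaces and therefore exact.

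Finally, the categorical conclusion: if $P\in\MM(H)$ is projective, then
$$\Hom_G(\ind_H^G P,\,-)\simeq \Hom_H\bigl(P,\,R(-)\bigr)$$
is the composition of the exact functor $R$ with the exact functor $\Hom_H(P,-)$, hence is exact; so $\ind_H^G P$ is projective in $\MM(G)$. The only subtle point is keeping track of the modular character in the Frobenius reciprocity identity, but this is already built into Lemma \ref{ind-tensor-lem} and so requires no new input.
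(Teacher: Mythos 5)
Your overall strategy is the same as the paper's: exhibit $\ind_H^G$ as the left adjoint of an exact functor. But there is a genuine gap in the identification of the right adjoint. The ``standard Hom--tensor adjunction'' applied to the bimodule $\HH(G)$ gives
$$\Hom_{\HH(G)}\bigl(\HH(G)\ot_{\HH(H)}M',\,N\bigr)\simeq \Hom_{\HH(H)}\bigl(M',\,\Hom_{\HH(G)}(\HH(G),N)\bigr),$$
not $\Hom_{\HH(H)}(M',N)$. For a unital ring one has $\Hom_A(A,N)=N$ and your simplification would be valid, but $\HH(G)$ is a non-unital idempotented algebra, and for a nondegenerate module $N$ one only gets $\Hom_{\HH(G)}(\HH(G),N)\simeq\varprojlim_K e_KN$, which in general strictly contains $N$. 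Equivalently: the naive Frobenius reciprocity $\Hom_G(\ind_H^G M,N)\simeq\Hom_H(M,\Res^G_H N\cdot\chi^{-1})$ is simply false for a general closed subgroup $H$ (it holds when $H$ is open, when $H\backslash G$ is compact, or — by Bernstein's second adjointness theorem, which is a deep result and not a formal consequence — for parabolic subgroups). Already for $H=\{1\}$ one has $\ind_1^G\C\simeq\HH(G)$ and $\Hom_G(\HH(G),N)=\varprojlim_K e_KN\neq N$ in general.

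The consequence is that you have proved exactness of the wrong functor, and the step you dismiss as trivial (``restriction does not change the underlying vector space'') is precisely where the content of the lemma lies. The actual right adjoint is $N\mapsto\Hom_{\HH(G)}(\HH(G),N)$ with the $\HH(H)$-structure coming from the right action of $\HH(H)$ on $\HH(G)$, and its exactness is not automatic: it is equivalent to the projectivity of $\HH(G)$ as a left module over itself, which the paper establishes separately in Lemma \ref{Hecke-proj-lem} by a Mittag--Leffler argument on the inverse system $(e_KN)_K$. To repair your proof, replace the twisted restriction functor by $\Hom_{\HH(G)}(\HH(G),-)$ and invoke Lemma \ref{Hecke-proj-lem} for its exactness; the final categorical step (a left adjoint of an exact functor preserves projectives) is then fine as you stated it.
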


\begin{proof} The right adjoint functor to $\ind_H^G$ is given by
$N\mapsto \Hom_{\HH(G)}(\HH(G),N)$, where the $\HH(H)$-structure is induced by the right action of $\HH(H)$ on $\HH(G)$.
The assertion follows from the fact that the latter functor is exact since $\HH(G)$ is projective as an object of $\MM(G)$ by Lemma \ref{Hecke-proj-lem}.
\end{proof}

\begin{lemma}\label{hom-ind-rep-lem}
For $M\in \MM(H)$, one has a natural isomorphism
$$H^{sm}_i(G,\ind_H^G M)\simeq H^{sm}_i(H,M\cdot (\De_H/\De_G|_H)).$$
\end{lemma}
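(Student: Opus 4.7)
The plan is to reduce the computation to a standard change-of-rings argument using the Hecke-algebra description of compact induction provided by Lemma \ref{ind-tensor-lem}. First I would apply Lemma \ref{ind-tensor-lem} to rewrite $\ind_H^G M\simeq \HH(G)\ot_{\HH(H)} N$ as a left $\HH(G)$-module, where $N:=M\cdot(\De_H/\De_G|_H)$. Combined with the identification $H^{sm}_i(G,V)=\Tor^{\HH(G)}_i(\C,V)$ coming from \eqref{coinv-tensor-eq}, the task becomes computing $\Tor_i^{\HH(G)}\bigl(\C,\HH(G)\ot_{\HH(H)}N\bigr)$.

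Next, pick a projective resolution $P_\bullet\to N$ in $\MM(H)$. Since $\ind_H^G$ is exact and preserves projectives (Lemma \ref{ind-proj-lem}), applying it yields a projective resolution $\HH(G)\ot_{\HH(H)}P_\bullet\to \ind_H^G M$ in $\MM(G)$. Using associativity of the tensor product I obtain
\begin{align*}
\Tor_i^{\HH(G)}(\C,\ind_H^G M)
 &= H_i\bigl(\C\ot_{\HH(G)}\HH(G)\ot_{\HH(H)}P_\bullet\bigr) \\
 &= H_i\bigl(\C\ot_{\HH(H)}P_\bullet\bigr),
\end{align*}
where on the right $\C$ carries the right $\HH(H)$-module structure inherited from the canonical right $\HH(H)$-module structure on $\HH(G)$ via the identification $\C\ot_{\HH(G)}\HH(G)\simeq \C$.

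The remaining step, and the main obstacle, is to identify this induced right $\HH(H)$-action on $\C$ with the standard one coming from integration over $H$; once this is done the last expression equals $\Tor_i^{\HH(H)}(\C,N)=H^{sm}_i(H,N)$, yielding the claimed isomorphism. This compatibility is precisely what is engineered by the modular twist $\De_H/\De_G|_H$ in Cartier's isomorphism of Lemma \ref{ind-tensor-lem}: applied to the trivial character of $H$, that lemma identifies the induced $\HH(H)$-action on $\C\ot_{\HH(G)}\HH(G)$ with the trivial action, and the general case follows by functoriality of the resolution. The underlying verification is a matter of unwinding the measure-theoretic conventions relating the right Haar measures on $G$ and $H$, which I do not expect to introduce any genuine difficulty beyond what is already encoded in Lemma \ref{ind-tensor-lem}.
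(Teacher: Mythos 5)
Your proposal is correct and follows essentially the same route as the paper, which establishes the $i=0$ case via Lemma \ref{ind-tensor-lem} together with \eqref{coinv-tensor-eq} and then handles higher degrees using the exactness of $\ind_H^G$ and Lemma \ref{ind-proj-lem}; your version merely makes the derived-functor step explicit via the resolution $\HH(G)\ot_{\HH(H)}P_\bullet$ and tensor associativity. The one point you flag as remaining — that the right $\HH(H)$-action on $\C\ot_{\HH(G)}\HH(G)\simeq\C$ is the integration augmentation — is indeed immediate, since $\int_G(\phi*\psi)=\int_G\phi\cdot\int_H\psi$ with no further modular twist.
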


\begin{proof}
For $i=0$, this follows from \cite[Prop.\ 2.29]{BZ}, or can be deduced from Lemma \ref{ind-tensor-lem} together with \eqref{coinv-tensor-eq}. 
The general case follows from Lemma \ref{ind-proj-lem}.
\end{proof}

\subsection{Equivariant sheaves on $l$-spaces}\label{G-eq-sheaves-sec}

Let $G$ be an $l$-group acting on an $l$-space $X$. Let $\MM(G,X)$ denote the category of $G$-equivariant sheaves on $X$
(see \cite[Sec.\ 1.3]{Bern}). Recall that a $G$-equivariant sheaf $F$ on $X$ is equipped with an isomorphism of sheaves on $G\times X$,
$$\a:p^*F\to a^*F,$$
where $p:G\times X\to X$ is the projection and $a:G\times X\to X$ is the action map, satisfying the natural cocycle condition on $G\times G\times X$.

If $X$ is a point then $\MM(G,pt)$ is naturally equivalent to $\MM(G)$, the category of smooth representations of $G$ (see \cite[Sec.\ 1.3]{Bern}).

If $\pi:X\to Y$ is a continuous map of $l$-spaces, compatible with actions of $G$ on $X$ and $Y$, then we have an exact functor
$$\pi_!:\MM(G,X)\to \MM(G,Y).$$
Furthermore, a $G$-equivariant version of the base change holds, similar to \cite[Sec.\ 1.3, Prop.\ 3]{Bern}.

In the case of the projection to the point $\pi:X\to pt$, we have $\pi_!(F)=\SS(X,F)$, the Schwartz  space of compactly supported sections of $F$, with the natural
(smooth) action of $G$.

If $Y$ has a trivial $G$-action, then by the base change, for any point $y\in Y$ and a $G$-equivariant sheaf $F$ on $X$, we have
an isomorphism in $\MM(G)$,
\begin{equation}\label{push-forward-stalk-eq}
(\pi_!F)_y\simeq \SS(\pi^{-1}(y), F|_{\pi^{-1}(y)}).
\end{equation}

If $H\sub G$ is a closed subgroup, then for $M\in \MM(H)$, 
the compactly induced representation $\ind_H^G M$ can be identified with the Schwartz space of
the corresponding $G$-equivariant sheaf on $G/H$. Hence, we get the following Corollary from Lemma \ref{hom-ind-rep-lem}.

\begin{cor}\label{G/H-cor}
Let $H\sub G$ is a closed subgroup, and let $L$ be a $G$-equivariant complex line bundle on $G/H$.
Then
$$H^{sm}_i(G,\SS(G/H,L))\simeq H^{sm}_i(H,\chi_L\cdot (\De_H/\De_G|_H)),$$
where $\chi_L:H\to \C^*$ is the character given by the action of $H$ on the fiber of $L$ at the point with the stabilizer $H$.
\end{cor}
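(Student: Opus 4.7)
The plan is to reduce the statement directly to Lemma \ref{hom-ind-rep-lem}. The key observation, already recorded in the paragraph preceding the corollary, is that for a $G$-equivariant line bundle $L$ on $G/H$ the Schwartz space of sections is canonically isomorphic as a smooth $G$-representation to the compactly induced representation $\ind_H^G(\chi_L)$. Here the one-dimensional smooth $H$-representation is precisely the character $\chi_L$ by which the stabilizer $H$ acts on the fiber $L_{eH}$: a compactly supported smooth section $s$ of $L$ pulls back along $\pi:G\to G/H$ to a compactly supported (mod $H$) smooth function $\tilde{s}:G\to L_{eH}\simeq\C$ satisfying $\tilde{s}(gh)=\chi_L(h)^{-1}\tilde{s}(g)$, which is the defining datum of an element of $\ind_H^G(\chi_L)$.

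Given this identification, I would simply apply Lemma \ref{hom-ind-rep-lem} with $M=\chi_L$, yielding
$$H^{sm}_i(G,\SS(G/H,L))\simeq H^{sm}_i(G,\ind_H^G\chi_L)\simeq H^{sm}_i(H,\chi_L\cdot(\De_H/\De_G|_H)),$$
which is exactly the stated formula.

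Since both ingredients are already in place, there is no substantive obstacle; the corollary is essentially a specialization of Lemma \ref{hom-ind-rep-lem} to the case of an induced line-bundle representation. The only minor point worth verifying is that the conventions are consistent --- in particular, that the character entering $\ind_H^G\chi_L$ in Lemma \ref{hom-ind-rep-lem} matches the character defined by the action on the fiber $L_{eH}$, rather than, say, its inverse. This is a one-line compatibility check against the definition of compact induction adopted in Section \ref{G-eq-sheaves-sec}.
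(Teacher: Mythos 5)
Your proposal is correct and is exactly the paper's argument: the corollary is obtained by identifying $\SS(G/H,L)$ with the compactly induced representation $\ind_H^G(\chi_L)$ and then applying Lemma \ref{hom-ind-rep-lem}. The convention check you flag (whether the fiber character or its inverse appears) is a reasonable point of care but does not affect the validity of the approach.
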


If $U\sub X$ is an open subset, $Z=X\setminus U$ the complementary closed subset, then for any sheaf $F$ on $X$ we have an exact sequence
$$0\to \SS(U,F)\to \SS(X,F)\to \SS(Z,F|_Z)\to 0$$
(see \cite[Sec.\ 1.3, Prop.\ 4]{Bern}). We also have the following co-Cech complexes associated with open coverings.

\begin{lemma}\label{co-Cech-lem}
For any open covering $(U_i)$ of $X$ and any $G$-equivariant sheaf $F$ on $X$, where each $U_i$ is $G$-invariant, we have an exact sequence
of smooth $G$-representations
$$\ldots \bigoplus_{i,j} \SS(U_{ij},F)\to \bigoplus_i \SS(U_i,F)\to \SS(X,F)\to 0,$$
in which the differentials are defined in a dual way to the Cech complex (see \cite[Sec.\ 21.8]{stacks}).
\end{lemma}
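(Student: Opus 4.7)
The plan is to establish exactness by constructing an explicit contracting homotopy for the co-\v{C}ech complex, exploiting the fact that on an $l$-space any finite open cover of a compact set admits a disjoint refinement by compact open sets.

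First I would check that the differentials, given by alternating sums of the extension-by-zero maps $\SS(U_{I'},F)\hra \SS(U_I,F)$ for $I\sub I'$, are well-defined and $G$-equivariant; this is immediate, since each intersection $U_I = U_{i_0}\cap\cdots\cap U_{i_n}$ is $G$-invariant and extension by zero commutes with the $G$-action. Next, I would reduce to finite subcovers: any element $s$ of a \v{C}ech chain group $\bigoplus_{i_0,\ldots,i_n} \SS(U_{i_0\cdots i_n},F)$ has only finitely many nonzero components, whose supports have compact union $K\sub X$, so the question of whether $s$ is a cycle or boundary involves only indices in some finite set $J'\sub J$ which one may take to cover $K$. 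Using the $l$-space property, I would then refine $(U_j\cap K)_{j\in J'}$ to a finite disjoint clopen partition $K=\bigsqcup_\alpha W_\alpha$ together with an assignment $\iota$ such that $W_\alpha\sub U_{\iota(\alpha)}$.

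Writing $\chi_\alpha$ for the characteristic function of $W_\alpha$, I would then define a degree-raising operator $h$ on chains supported in $K$ by
$$(hs)_{j_0,i_0,\ldots,i_n} := \sum_{\alpha:\ \iota(\alpha)=j_0} \chi_\alpha\cdot s_{i_0,\ldots,i_n},$$
where $\chi_\alpha\cdot s_{i_0,\ldots,i_n}$ means first restrict to the clopen set $W_\alpha\cap U_{i_0\cdots i_n}$ and then extend by zero to $U_{j_0,i_0,\ldots,i_n}$; this preserves Schwartzness because the $W_\alpha$ are compact and clopen. A standard face-map computation should then give the identity $dh+hd=\id$ on the chain group supported in $K$, establishing exactness in positive \v{C}ech degree. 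At the augmentation, the same partition yields surjectivity onto $\SS(X,F)$: any $s\in \SS(X,F)$ with compact support $K$ decomposes as $s=\sum_\alpha \chi_\alpha s$, and each $\chi_\alpha s$ lies in the image of $\SS(U_{\iota(\alpha)},F)$.

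The main obstacle, such as it is, is the bookkeeping of signs and face maps in the verification of $dh+hd=\id$; the essential geometric input---the existence of disjoint clopen partitions refining finite open covers on compact subsets of an $l$-space---is immediate from the definition. Alternatively one could invoke the general principle that co-\v{C}ech complexes are acyclic for sheaves admitting partitions of unity, as developed in \cite[Sec.\ 21.8]{stacks}, applied to the compactly-supported sections of the $G$-equivariant sheaf $F$.
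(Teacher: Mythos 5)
Your argument is correct, but it takes a genuinely different route from the paper's. The paper's proof is sheaf-theoretic: it observes that the co-\v{C}ech sequence is obtained by applying the functor $\SS(X,\cdot)$ of compactly supported sections to the complex of sheaves $\cdots\to\bigoplus_{i,j}j_!F|_{U_{ij}}\to\bigoplus_i j_!F|_{U_i}\to F\to 0$, that this complex of sheaves is exact (on stalks it is the augmented chain complex of a nonempty simplex), and that $\SS(X,\cdot)$ is exact on $l$-spaces by \cite[Sec.\ 1.3]{Bern}. You instead build an explicit contracting homotopy at the level of sections, with characteristic functions of compact open sets playing the role of a partition of unity; this is in effect the section-level incarnation of the same stalkwise contraction, and it is a valid, self-contained alternative. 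Two small points of care: first, for a general sheaf $F$ the support $K$ of a chain is compact but need not be open, so rather than partitioning $K$ itself into subsets clopen in $X$ you should cover it by finitely many pairwise disjoint compact open subsets $W_\alpha\sub U_{\iota(\alpha)}$ of $X$ (disjointification of a finite cover by compact opens works because compact subsets of a Hausdorff space are closed); the identity $dh+hd=\id$ only needs $\sum_\alpha\chi_\alpha=1$ on $K$. Second, for a general sheaf, ``multiplication by $\chi_\alpha$'' must be read via the sheaf decomposition over the disjoint open cover $U=(U\cap W_\alpha)\sqcup(U\setminus W_\alpha)$, which is legitimate precisely because $W_\alpha$ is clopen. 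The paper's route is shorter because it delegates the homotopy and the bookkeeping to standard sheaf theory; yours is more elementary and makes the role of the totally disconnected topology explicit.
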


\begin{proof} This follows from the exactness of the functor $\SS(X,\cdot)$ and the exactness of the corresponding sheafified complex,
where we use the direct sums of sheaves of the form $j_!F|_U$, for $j:U\hra X$ with $U=U_{i_1\ldots i_k}$.
\end{proof}

It is well known that sheaves of $\C$-vector spaces on an $l$-space $Y$ correspond to nondegenerate $\SS(Y)$-modules (see \cite[Sec.\ 1.3]{Bern}), while
smooth $G$-representations correspond to nondegenerate $\HH(G)$-modules, where $\HH(G)$ is the Hecke algebra.
We can get a similar description for the category of $G$-equivariant sheaves on $Y$, where $G$ acts trivially on $Y$.

\begin{definition}
We say that an $\HH(G)\ot\SS(Y)$-module $M$ is {\it strongly nondegenerate}
if $\HH(G)M=M$ and $\SS(Y)M=M$.
\end{definition}

\begin{lemma}
If $Y$ has a trivial $G$-action then the functor $F\mapsto \SS(Y,F)$ gives an equivalence
of $\MM(G,Y)$ with the category of strongly nondegenerate $\HH(G)\ot \SS(Y)$-modules $M$. 
Under this equivalence the functor of push-forward to the point $F\mapsto \SS(Y,F)\in \MM(G)$ corresponds to restricting the module structure
to $\HH(G)$. Also, for each point $y\in Y$, the stalk functor $F\mapsto F_y\in \MM(G)$ corresponds to the functor 
$M\mapsto M\ot_{\SS(Y)}\C_y$, where $\C_y$ is the $\SS(Y)$-module corresponding to the homomorphism $ev_y:\SS(Y)\to \C$
of evaluation at $y$. 
\end{lemma}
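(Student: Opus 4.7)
The plan is to construct the functor $F\mapsto \SS(Y,F)$, exhibit a quasi-inverse, and then deduce the two supplementary assertions (on push-forward and on stalks) by reducing to the $G$-equivariant base change already cited and to the non-equivariant Bernstein correspondence $F\leftrightarrow \SS(Y,F)$ between sheaves on the $l$-space $Y$ and nondegenerate $\SS(Y)$-modules.

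First, given $F\in \MM(G,Y)$, forgetting the equivariance produces, via Bernstein's correspondence, a nondegenerate $\SS(Y)$-module structure on $\SS(Y,F)$. Since $G$ acts trivially on $Y$, the action map $a\colon G\times Y\to Y$ coincides with the projection $p$, so the equivariance datum $\a\colon p^*F\to a^*F$ is an automorphism of $p^*F$ on $G\times Y$ whose cocycle condition expresses a pointwise $G$-action on each stalk $F_y$, and thus on $\SS(Y,F)$. Smoothness of this action holds because $\a$, being a morphism of sheaves on the $l$-space $G\times Y$, is locally constant in the $G$-direction, so any compactly supported section of $F$ is stabilized by a compact open subgroup of $G$. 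The two actions commute since $\SS(Y)$ acts by pointwise multiplication on stalks while $G$ acts fiberwise with $Y$ fixed. Strong nondegeneracy of both actions is then inherited from each one-sided Bernstein-type equivalence.

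For the quasi-inverse, given a strongly nondegenerate $\HH(G)\ot \SS(Y)$-module $M$, the $\SS(Y)$-structure yields a sheaf $F$ on $Y$. To endow $F$ with an equivariance isomorphism, I use that the $\HH(G)$-action commutes with $\SS(Y)$, so every smooth operator on $M$ is $\SS(Y)$-linear and defines an automorphism of $F$ as a sheaf on $Y$. These fit together into a candidate $\a$ on $G\times Y$; here the main obstacle is to upgrade the individual automorphisms $g\cdot(-)$ (or more precisely, the action of idempotents $e_K\in \HH(G)$) to a genuine morphism of sheaves on $G\times Y$ rather than a family of automorphisms parametrized by $G$ set-theoretically. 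This is the point at which I would invoke strong nondegeneracy: for every local section $s$ of $F$ and every $y\in Y$, the map $g\mapsto g\cdot s$ is locally constant in $g$ because $s$ lies in $e_K M$ for a compact open $K$, so $\a$ is indeed locally constant on $G\times Y$, i.e.\ a morphism of sheaves; the cocycle condition on $G\times G\times Y$ follows from the associativity of the $\HH(G)$-action.

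That these two constructions are mutually quasi-inverse is immediate from the non-equivariant Bernstein equivalence, since the functors are inverse on underlying $\SS(Y)$-modules and preserve the equivariance datum by construction. The remaining statements are then formal: the push-forward to the point is by definition $\SS(Y,F)$ with its smooth $G$-action, and under the equivalence this is precisely the restriction of the $\HH(G)\ot\SS(Y)$-module structure to $\HH(G)$; for the stalk at $y$, the base change isomorphism \eqref{push-forward-stalk-eq} gives $F_y\simeq \SS(\{y\},F|_{\{y\}})$ in $\MM(G)$, and under the equivalence this corresponds to $M\ot_{\SS(Y)}\C_y$, the $\HH(G)$-action descending to the tensor product because $\HH(G)$ commutes with $\SS(Y)$ on $M$.
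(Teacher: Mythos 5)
Your proof is correct and follows essentially the same route as the paper's: both directions combine the non-equivariant Bernstein correspondence between sheaves on $Y$ and nondegenerate $\SS(Y)$-modules with the equivalence between smooth $G$-representations and nondegenerate $\HH(G)$-modules, using smoothness of the $G$-action (via the idempotents $e_K$, or equivalently the induced $\SS(G)$-module map $\a$) together with commutation with $\SS(Y)$ to pass between the commuting $\HH(G)$-action and the equivariance datum on $G\times Y$. The supplementary assertions on push-forward and stalks are then formal consequences of the construction, exactly as in the paper.
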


\begin{proof} 
Let us denote by $\MM'(\HH(G)\ot\SS(Y))$ the category of nondegenerate $\HH(G)\ot\SS(Y)$-modules.
First, we observe that the functor $F\mapsto \SS(Y,F)$ can be enriched to a functor 
$$\MM(G,Y)\to \MM'(\HH(G)\ot\SS(Y)).$$
Indeed, the nondegenerate $\SS(Y)$-module $\SS(Y,F)$ has a $G$-action commuting with the $\SS(Y)$-module structure, and
$\SS(Y,F)$ is a smooth $G$-representation. 
%(since it corresponds to the push-forward of $F$ from $Y$ to the point, see \cite[Sec.\ 1.3]{Bern}).
Hence, $\SS(Y,F)$ has a nondegenerate $\HH(G)$-module structure commuting with the $\SS(Y)$-module structure, i.e., it becomes
a strongly nondegenerate $\HH(G)\ot\SS(Y)$-module.

Conversely, starting with $M\in \MM'(\HH(G)\ot\SS(Y))$, we get a sheaf $F$ on $Y$ with $\SS(Y,F)=M$, and we can use the smooth $G$-action on $M$
to get a $G$-equivariant structure on $F$. Namely, we know that $M$ corresponds to a $G$-equivariant sheaf on a point, which is given by a map of $\SS(G)$-modules,
$$\a:\SS(G)\ot M\to \SS(G)\ot M$$ 
satisfying a certain cocycle condition. But the map $\a$ commutes with the $\SS(Y)$-module structures, hence, it corresponds to a map $p^*F\to p^*F$ of sheaves on $G\times X$,
so it gives a $G$-equivariant structure on $F$.
\end{proof}

Assume $Y$ has a trivial $G$-action. For each open compact subgroup $K\sub G$ and each open compact $L\sub Y$,
let $P_{K,L}$ denote the object of $\MM(G,Y)$ corresponding to the strongly nondegenerate $\HH(G)\ot\SS(Y)$-module $\HH(G)e_K\ot \SS(Y)e_L$,
where $e_K\in \HH(G)$ (resp., $e_L\in \SS(Y)$) is the idempotent corresponding to $K$ (resp., $L$).

\begin{lemma}\label{P-KL-lem}
The objects $P_{K,L}\in \MM(G,Y)$ are projective. For every object $F$ of $\MM(G,Y)$ there is a surjective from a direct sum of some $P_{K,L}$ to $F$.
For every point $y\in Y$, the stalk $(P_{K,L})_y$ is a projective $\HH(G)$-module. Also, $\SS(Y,P_{K,L})$ is a projective $\HH(G)$-module.
%object of $\MM(G)$.
\end{lemma}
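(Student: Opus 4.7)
The plan is to work throughout under the equivalence from the preceding lemma, which identifies $\MM(G,Y)$ with the category of strongly nondegenerate $\HH(G)\ot\SS(Y)$-modules and sends $P_{K,L}$ to $\HH(G)e_K\ot\SS(Y)e_L$. A morphism from this module to any strongly nondegenerate $M$ is determined by the image $m$ of $e_K\ot e_L$; idempotency of $e_K$ and $e_L$ forces $e_K m = m$ and $e_L m = m$, while conversely any such $m$ defines a morphism. Thus $\Hom_{\MM(G,Y)}(P_{K,L},-)$ is the functor $M\mapsto e_K e_L M$. Since $e_K$ and $e_L$ are commuting idempotents coming from different tensor factors, each of $M\mapsto e_K M$ and $M\mapsto e_L M$ is a direct-summand (hence exact) functor on $\HH(G)\ot\SS(Y)$-modules, and exactness is inherited by the subcategory of strongly nondegenerate modules. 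So $P_{K,L}$ is projective.

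For the surjection assertion, I would invoke strong nondegeneracy of the module $M$ corresponding to a given $F\in\MM(G,Y)$: each $m\in M$ is fixed by some idempotent $e_K$ (for $K$ a sufficiently small compact open subgroup of $G$) and by some $e_L$ (for $L$ a sufficiently large compact open subset of $Y$ containing the $\SS(Y)$-support of $m$). Each such $m$ then gives a morphism $P_{K,L}\to F$ sending $e_K\ot e_L$ to $m$, and the direct sum of these morphisms over a generating set of $m$'s is surjective.

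For the stalk, the preceding lemma identifies $(P_{K,L})_y$ with $(\HH(G)e_K\ot\SS(Y)e_L)\ot_{\SS(Y)}\C_y$; since the $\SS(Y)$-action lives only on the second tensor factor, this equals $\HH(G)e_K\ot_{\C}(\SS(Y)e_L\ot_{\SS(Y)}\C_y)$, and a direct calculation shows $\SS(Y)e_L\ot_{\SS(Y)}\C_y$ is $\C$ when $y\in L$ and $0$ otherwise, so the stalk is either $\HH(G)e_K$ or zero. In either case it is a direct summand of $\HH(G)$, hence projective by Lemma \ref{Hecke-proj-lem}. Finally, $\SS(Y,P_{K,L})$ viewed as an $\HH(G)$-module is simply $\HH(G)e_K\ot_{\C}\SS(Y)e_L$ with $\HH(G)$ acting on the first factor; choosing any $\C$-basis of $\SS(Y)e_L$ presents this as a direct sum of copies of the projective module $\HH(G)e_K$.

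The only step requiring real care is the first one, namely verifying that the functor $M\mapsto e_K e_L M$ genuinely computes $\Hom_{\MM(G,Y)}(P_{K,L},M)$ and that its exactness persists upon restriction to the full subcategory of strongly nondegenerate modules; once this is in place, the remaining three assertions reduce to transparent manipulations with the identifications supplied by the preceding lemma.
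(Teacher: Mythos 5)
Your proposal is correct and follows essentially the same route as the paper: projectivity via the identification $\Hom(P_{K,L},M)=(e_K\ot e_L)M$ and exactness of that functor, the surjection from strong nondegeneracy, the stalk computation giving $\HH(G)e_K$ or $0$, and the identification of $\SS(Y,P_{K,L})$ with a direct sum of copies of $\HH(G)e_K$. The only cosmetic difference is that you deduce projectivity of $\HH(G)e_K$ as a summand of $\HH(G)$ via Lemma \ref{Hecke-proj-lem}, where one can also see it directly from exactness of $M\mapsto e_KM$.
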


\begin{proof}
It is clear that the $\HH(G)\ot\SS(Y)$-module $\HH(G)e_K\ot \SS(Y)e_L$ is projective since $\Hom(\HH(G)e_K\ot \SS(Y)e_L,M)=(e_K\ot e_L)M$.
Furthermore, since for a strongly nondegenerate $\HH(G)\ot \SS(Y)$-module $M$, every $x\in M$ is contained in $(e_K\ot e_L)M$ for
some $(K,L)$, there is a surjection from a direct sum of some $\HH(G)e_K\ot \SS(Y)e_L$ to $M$.

The stalks of $P_{K,L}$ are given by
$$(P_{K,L})_y\simeq (\HH(G)e_K)\ot(\SS(Y)e_L)\ot_{\SS(M)}\C_y)\simeq \begin{cases} \HH(G)e_K, & y\in L\\ 0, &y\not\in L,\end{cases}$$
so they are projective $\HH(G)$-modules. 

Finally, viewed as an $\HH(G)$-module, $\HH(G)e_K\ot \SS(Y)e_L$ is a direct sum of copies of $\HH(G)e_K$, hence it is projective.
\end{proof}

Still assuming that $Y$ has a trivial $G$-action let us consider the functor of $G$-coinvariants
$$\MM(G,Y)\to \MM(Y): F\mapsto F_G,$$
which corresponds to the functor $M\mapsto \C\ot_{\HH(G)}M$ on strongly nondegenerate $\HH(G)\ot\SS(M)$-modules.
Thus, by definition $\SS(Y,F_G)=\SS(Y,F)_G$.
We denote by $F\mapsto H_i(G,F)$ the left derived functors $\MM(G,Y)\to \MM(Y)$.

\begin{lemma}\label{trivial-G-action-lem} 
Assume $Y$ has a trivial $G$-action. Then for $F\in \MM(G,Y)$ and $y\in Y$, we have
natural isomorphisms
$$H_i(G,F)_y\simeq H_i(G,F_y),$$
$$\SS(Y,H_i(G,F))\simeq H_i(G,\SS(Y,F)).$$
\end{lemma}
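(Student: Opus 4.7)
My overall plan is to prove both isomorphisms simultaneously by constructing a single projective resolution of $F$ in $\MM(G,Y)$ whose terms behave well under the two auxiliary functors in question. First, using Lemma \ref{P-KL-lem}, I would choose a resolution $P_\bullet\to F$ in $\MM(G,Y)$ in which every term is a direct sum of projectives of the form $P_{K,L}$; by definition of the derived functor, $H_i(G,F)$ is then the homology of $(P_\bullet)_G$ in $\MM(Y)$.

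For the first isomorphism I would apply the stalk functor at $y\in Y$. Under the equivalence of $\MM(G,Y)$ with the category of strongly nondegenerate $\HH(G)\ot\SS(Y)$-modules, this functor is simply $M\mapsto M\ot_{\SS(Y)}\C_y$; it is exact (exactness in $\MM(G,Y)$ coincides with exactness of underlying vector spaces) and it commutes with the $G$-coinvariants functor $\C\ot_{\HH(G)}(-)$, since the two tensor products act on opposite sides of a $(\HH(G),\SS(Y))$-bimodule. Lemma \ref{P-KL-lem} tells me that each $(P_{K,L})_y$ is a projective $\HH(G)$-module, so $(P_\bullet)_y\to F_y$ is a projective resolution in $\MM(G)$. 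Combining these observations,
$$H_i(G,F)_y=H_i\bigl((P_\bullet)_G\bigr)_y\simeq H_i\bigl(((P_\bullet)_y)_G\bigr)=H^{sm}_i(G,F_y).$$

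The second isomorphism would be proved in precisely the same fashion, replacing the stalk functor by $\SS(Y,-)$. This functor is exact on $\MM(G,Y)$ (it corresponds to forgetting the $\SS(Y)$-module structure), it commutes with $G$-coinvariants by the same commutation-of-tensor-products argument (the $\HH(G)$- and $\SS(Y)$-actions commute on a strongly nondegenerate bimodule), and by the last clause of Lemma \ref{P-KL-lem} it sends each $P_{K,L}$ to a projective $\HH(G)$-module. Therefore $\SS(Y,P_\bullet)\to\SS(Y,F)$ is a projective resolution in $\MM(G)$, and
$$H^{sm}_i(G,\SS(Y,F))=H_i\bigl(\SS(Y,P_\bullet)_G\bigr)=\SS(Y,H_i((P_\bullet)_G))=\SS(Y,H_i(G,F)).$$

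I do not anticipate a genuine obstacle here: all the substantive ingredients (projective resolutions by the $P_{K,L}$, exactness of stalks and of $\SS(Y,-)$ on the bimodule category, and projectivity of the stalks and global sections of $P_{K,L}$ over $\HH(G)$) have been set up in the preceding lemmas. The only point where one might want to be careful is the verification that stalks at $y$ and the functor $\SS(Y,-)$ really do commute with $G$-coinvariants when the $G$-action on $Y$ is trivial, but this is immediate from the bimodule description and the associativity of tensor products.
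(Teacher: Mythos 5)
Your proof is correct and follows essentially the same route as the paper's: a resolution by direct sums of the $P_{K,L}$ from Lemma \ref{P-KL-lem}, combined with exactness of the stalk and $\SS(Y,-)$ functors and their commutation with $G$-coinvariants. The only cosmetic difference is that the paper quotes \cite[Prop.\ 2.36]{BZ} for the $i=0$ case of the first isomorphism, whereas you derive it directly from the bimodule description; both are fine.
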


\begin{proof} For $i=0$ the first isomorphism is \cite[Prop.\ 2.36]{BZ} and the second holds by definition. 
The general case follows from Lemma \ref{P-KL-lem} by considering projective resolutions 
consisting of direct sums of objects $P_{K,L}$.
\end{proof}

Now assume we have an $l$-space $X$ with an action of an $l$-group $G$ and a continuous map of $l$-spaces
$\pi:X\to Y$, such that $\pi(gx)=\pi(x)$ (so it is $G$-equivariant, where $G$ acts trivially on $Y$).

\begin{lemma}\label{sh-G-hom-lem}
For a $G$-equivariant sheaf $F$ on $X$ and a point $y\in Y$, we have natural isomorphisms
$$H_i(G,\pi_!F)_y\simeq H_i(G,\SS(\pi^{-1}(Y),F|_{\pi^{-1}(y)})).$$
\end{lemma}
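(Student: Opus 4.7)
The plan is to deduce this as a formal consequence of the two general facts about $G$-equivariant sheaves on $l$-spaces established earlier in the subsection: the $G$-equivariant base change \eqref{push-forward-stalk-eq} and the derived stalk formula of Lemma \ref{trivial-G-action-lem}.

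First I would observe that since $\pi\colon X\to Y$ is $G$-equivariant with trivial $G$-action on $Y$, the pushforward $\pi_!F$ naturally lies in $\MM(G,Y)$. Applying Lemma \ref{trivial-G-action-lem} to this object produces a natural isomorphism
$$H_i(G,\pi_!F)_y\simeq H_i(G,(\pi_!F)_y),$$
where the right-hand side is the smooth homology of $G$ with coefficients in the stalk $(\pi_!F)_y\in\MM(G)$. This reduces the problem to identifying this stalk as an object of $\MM(G)$.

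Second, the $G$-equivariant base change \eqref{push-forward-stalk-eq}, stated just above in the setup where $Y$ carries the trivial $G$-action, identifies $(\pi_!F)_y$ with $\SS(\pi^{-1}(y),F|_{\pi^{-1}(y)})$ precisely as objects of $\MM(G)$. Substituting this identification into the previous display and using functoriality of $H_i(G,\cdot)$ on $\MM(G)$ yields the asserted isomorphism.

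The only things to check are that each of the input isomorphisms is natural and genuinely lives in $\MM(G)$ (as opposed to merely $\MM(\C)$), which is how Lemma \ref{trivial-G-action-lem} and \eqref{push-forward-stalk-eq} were formulated, and in particular is built into the proof of Lemma \ref{trivial-G-action-lem} via the projective resolutions by objects $P_{K,L}$ from Lemma \ref{P-KL-lem}. No further analytic or homological input is needed, so I do not anticipate any serious obstacle beyond organizing these two ingredients.
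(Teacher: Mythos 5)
Your proposal is correct and is essentially identical to the paper's own proof: both apply Lemma \ref{trivial-G-action-lem} to $\pi_!F\in\MM(G,Y)$ to pass to the stalk, and then invoke the equivariant base-change isomorphism \eqref{push-forward-stalk-eq} to identify that stalk with $\SS(\pi^{-1}(y),F|_{\pi^{-1}(y)})$ in $\MM(G)$. No further comment is needed.
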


\begin{proof}
Set $F':=\pi_!F\in\MM(G,Y)$. By Lemma \ref{trivial-G-action-lem}, we have
$$H_i(G,F')_y\simeq H_i(G,F'_y).$$
It remains to apply the isomorphism \eqref{push-forward-stalk-eq}. 
\end{proof}

Recall that the graph of a $G$-action on $X$ is the image of the map $G\times X\to X\times X: (g,x)\mapsto (x,gx)$.

\begin{definition}\label{loc-sep-def}
An action of an $l$-group $G$ on an $l$-space $X$ is called {\it locally separated} (resp., {\it separated}) if the graph of the action in $X\times X$
is locally closed (resp., closed). By \cite[Lem.\ 6.2]{BZ}, this is equivalent to requiring that the diagonal in $(X/G)\times(X/G)$ is locally closed (resp., closed).
\end{definition}

Note that in \cite{BZ} separated group actions are called regular. By \cite[Lem.\ 6.4]{BZ}, if the action of $G$ on $X$ is separated then the quotient $X/G$ is an $l$-space.

\begin{prop}\label{hom-van-prop} 
Assume that the action of $G$ on $X$ is locally separated, and let $F$ be a $G$-equivariant sheaf on $X$.
Fix $i_0\ge 0$. Assume that for every $G$-orbit $\Om\sub X$, one has
$H_i(G,\SS(\Om,F|_{\Om}))=0$ for $i\le i_0$. Then $H_i(G,\SS(X,F))=0$ for $i\le i_0$.
\end{prop}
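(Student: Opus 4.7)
The plan is to reduce to the hypothesis on single orbits by a dévissage: combine the open/closed short exact sequence from \cite[Sec.\ 1.3, Prop.\ 4]{Bern}, the stalkwise computation of $H_i(G,-)$ after pushing forward to a quotient $l$-space via Lemmas \ref{trivial-G-action-lem} and \ref{sh-G-hom-lem}, and a Zorn's lemma argument exhausting $X$ by $G$-invariant open subsets on which the conclusion is known.

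The key technical ingredient is the \emph{separated} case. Suppose $V\sub X$ is a $G$-invariant open subset on which the induced action is separated. By \cite[Lem.\ 6.4]{BZ} the quotient $V/G$ is then an $l$-space, and the projection $\pi:V\to V/G$ is $G$-equivariant for the trivial $G$-action on $V/G$. Lemma \ref{sh-G-hom-lem} applied to $F|_V$ gives
$$H_i(G,\pi_!F|_V)_y\simeq H_i(G,\SS(\pi^{-1}(y),F|_{\pi^{-1}(y)}))\quad\text{for every }y\in V/G,$$
and each fiber $\pi^{-1}(y)$ is a single $G$-orbit of $X$ contained in $V$, so by hypothesis the right-hand side vanishes for $i\le i_0$. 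Thus $H_i(G,\pi_!F|_V)=0$ as a sheaf on $V/G$, and Lemma \ref{trivial-G-action-lem} yields
$$H_i(G,\SS(V,F|_V))\simeq \SS(V/G,H_i(G,\pi_!F|_V))=0\quad\text{for }i\le i_0.$$

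With this, I consider the poset $\Sigma$ of $G$-invariant open subsets $W\sub X$ satisfying $H_i(G,\SS(W,F|_W))=0$ for all $i\le i_0$. It contains $\emptyset$ and is closed under filtered unions: since Schwartz sections have compact support, $\SS(W,F|_W)=\varinjlim \SS(W_\alpha,F|_{W_\alpha})$ for $W=\bigcup_\alpha W_\alpha$; because $M\mapsto M_G=\C\ot_{\HH(G)}M$ commutes with all colimits and filtered colimits are exact, the functors $H_i(G,-)=\Tor_i^{\HH(G)}(\C,-)$ commute with filtered colimits, yielding the vanishing for $W$. By Zorn's lemma $\Sigma$ has a maximal element $W_0$. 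If $W_0\neq X$, then the $G$-action on $Z_0:=X\setminus W_0$ is still locally separated, and using local closedness of the graph of the action one extracts a nonempty $G$-invariant subset $V\sub Z_0$, open in $Z_0$ and on which the induced action is separated. Writing $V=Z_0\cap V''$ with $V''\sub X$ open and $G$-invariant, the set $W_1:=W_0\cup V''=W_0\sqcup V$ is open and $G$-invariant in $X$, with $V$ closed in $W_1$ and $W_0$ open. The short exact sequence
$$0\to \SS(W_0,F|_{W_0})\to \SS(W_1,F|_{W_1})\to \SS(V,F|_V)\to 0$$
and its long exact sequence of $G$-homology, combined with the vanishing of the outer terms for $i\le i_0$ (by $W_0\in \Sigma$ and by the separated-case argument applied to $V$), force $W_1\in\Sigma$, contradicting maximality of $W_0$. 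Hence $W_0=X$, proving the proposition.

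The main obstacle is the geometric step of extracting a $G$-invariant open $V\sub Z_0$ on which the induced action is separated; this is essentially the geometric core of \cite[Thm.\ 6.9]{BZ} and must be reworked in the present relative setting, where $Z_0$ is a closed $G$-invariant subset of the locally separated $X$. The homological bookkeeping—most notably that $\Tor^{\HH(G)}_*(\C,-)$ commutes with filtered colimits—is routine but genuinely needed to close the Zorn-style induction, since $X/G$ need not satisfy any Noetherian-type descending chain condition.
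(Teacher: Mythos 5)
Your proof is correct, and the separated case is handled exactly as in the paper (push forward to $X/G$, apply Lemmas \ref{sh-G-hom-lem} and \ref{trivial-G-action-lem} to see that the sheaf $H_i(G,\pi_!F)$ has vanishing stalks). Where you genuinely diverge is in the reduction from the locally separated case to the separated one. The paper covers $X$ by $G$-invariant open subsets $U_i$ on which the action is separated (this is possible because the diagonal of $(X/G)\times(X/G)$ is locally closed, hence closed near each of its points), notes that all the intersections $U_{i_1\ldots i_k}$ again carry separated actions, and concludes via the co-Cech resolution of Lemma \ref{co-Cech-lem}. You instead run a Zorn-type exhaustion by $G$-invariant opens, peeling off at each stage a $G$-invariant subset $V$ of the remaining closed set $Z_0$ that is open in $Z_0$ and has separated action, and closing the induction with the open/closed short exact sequence. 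Two remarks. First, the step you flag as the ``main obstacle'' is not really one: since $Z_0$ is $G$-invariant, the graph of the action on $Z_0$ is $\Ga\cap(Z_0\times Z_0)$ with $\Ga$ the graph on $X$, so the action on $Z_0$ is again locally separated, and the paper's own one-line argument (a neighborhood $\ov{V}\times\ov{V}$ of a diagonal point of $(Z_0/G)^2$ in which the diagonal is closed) produces the required $V$; your passage from $V$ to a $G$-invariant open $V''\sub X$ with $V=Z_0\cap V''$ via $V''=X\setminus(Z_0\setminus V)$ is also fine. Second, your route needs the additional (standard, but not free) fact that $H_i(G,-)=\Tor_i^{\HH(G)}(\C,-)$ commutes with filtered colimits in order to bound chains in the Zorn argument, whereas the paper's route needs the exactness of the co-Cech complex and the ensuing d\'evissage of an unbounded resolution; the two costs are comparable, and both arguments are valid.
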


\begin{proof} Assume first that the action of $G$ on $X$ is separated, set $Y=X/G$, and let $\pi:X\to Y$ be the natural projection,
so that the fibers of $\pi$ are exactly $G$-orbits.

Set $F':=\pi_!F\in \MM(G,Y)$.
By Lemma \ref{sh-G-hom-lem}, our assumption implies that the sheaf $H_i(G,F')$ has zero stalks, hence
it is zero. Now using Lemma \ref{trivial-G-action-lem}, we get
$$H_i(G,\SS(X,F))\simeq H_i(G,\SS(Y,F'))\simeq \SS(Y,H_i(G,F'))=0.$$

Now consider the general case when the action of $G$ on $X$ is locally separated. Then 
the diagonal in $(X/G)\times(X/G)$ is closed in a neighborhood of every point $(\ov{x},\ov{x})$. Hence, there exists an open neighborhood
$\ov{U}$ of $\ov{x}$ in $X/G$, such that the diagonal is closed in $\ov{U}\times \ov{U}$. The preimage $U\sub X$ of $\ov{U}$ is a $G$-equivariant
open subset such that the action of $G$ on $U$ is separated.
Hence, there exists an open covering $(U_i)$ of $X$ such that $U_i$ are $G$-invariant and the action of $G$ on each $U_i$ is separated.
It follows that the action of $G$ on each $U_{i_1\ldots i_k}=U_{i_1}\cap\ldots U_{i_k}$ is separated, and so by the first part of the argument,
$$H_i(G,\SS(U_{i_1\ldots i_k},F))=0$$
for $i\le i_0$. Now the similar vanishing for $H_i(G,\SS(X,F))$ follows from the long exact sequence of smooth $G$-representations
$$\ldots \bigoplus_{i,j} \SS(U_{ij},F)\to \bigoplus_i \SS(U_i,F)\to \SS(X,F)\to 0$$
(see Lemma \ref{co-Cech-lem}).
\end{proof}

\subsection{Adding a coefficient ring}\label{sh-coef-sec}

Let $X$ be an $l$-space, $G$ an $l$-group.
Instead of considering $G$-equivariant sheaves of $\C$-vector spaces on $X$ as before, we can consider
the category $\MM(G,X;R)$ of $G$-equivariant sheaves of $R$-modules on $X$, where $R$ is a commutative $\C$-algebra.

All the results of Section \ref{G-eq-sheaves-sec} extend naturally to this setup.
For example, in the case when $G$ acts trivially on $X$, the category $\MM(G,X;R)$ is equivalent to the category of strongly nondegenerate
$R\ot\HH(G)\ot\SS(X)$-modules.

\begin{definition}
An {\it $R$-line bundle} over $X$ is a sheaf $\LL$ of $R$-modules such that for some open covering $(U_i)$ we have
an isomorphism of $\LL|_{U_i}$ with the sheaf of $R$-valued locally constant functions.
Thus, such $\LL$ is given by the transition functions $f_{ij}$ which are locally constant $R^*$-valued functions on $U_i\cap U_j$.
For an extension of rings $R\hra R'$ and an $R$-line bundle $\LL$, we have a natural $R'$-line bundle $\LL\ot_R R'$, given by
the same transition functions via the embedding $R\hra R'$.
\end{definition}

Let us describe the main example of an $R$-line bundle we are interested in (for the coefficent ring $R=\C[t,t^{-1}]$). 
Consider the homomorphism
$$F^*\to \C[t,t^{-1}]^*: x\mapsto |x|_t:=t^{-v(x)},$$
where $|x|=q^{-v(x)}$. Suppose
$X=\und{X}(F)$, where $\und{X}$ is an $F$-variety, and $L$ is an algebraic line bundle on $\und{X}$. 
Then we can construct an $\C[t,t^{-1}]$-line bundle $|L|_t$ on $X$ as follows.
Let $(\und{U_i})$ be an open covering such that $L$ is glued from trivial line bundles on $\und{U_i}$ using transition functions
$f_{ij}\in \OO^*(\und{U_i})$. Then we glue $|L|_t$ out of sheaves of locally constant $\C[t,t^{-1}]$-valued functions on $U_i=\und{U_i}(F)$
using $|f_{ij}|_t$ as $\C[t,t^{-1}]^*$-valued transition functions.

For any open region $D\sub \C$, we have a natural embedding of $\C$-algebras, 
$$\C[t,t^{-1}]\hra\HH(D): t\mapsto q^{\kappa},$$
and the corresponding embedding 
$$\SS(X,|L|_t)\hra \SS(X,|L|_t\ot\HH(D))\simeq \SS^{hol}(X\times D,|L|^{\kappa}).$$

For $c\in \C^*$, we can consider the localization 
$$R_c:=\C[t,t^{-1}][\frac{1}{t-c}].$$
Then if $q^{\kappa}\neq c$ on the region $D\sub \C$, the above embeddings extend to
$$R_c\hra \HH(D), \ \ \SS(X,|L|_t\ot_{\C[t,t^{-1}]}R_c)\hra \SS^{hol}(X\times D,|L|^{\kappa}).$$

We have the following analog of Lemma \ref{sm-hom-char-lem}.

\begin{lemma}\label{sm-hom-char-R-lem}
Let $\chi:G\to \C[t,t^{-1}]^*$ be a locally constant homomorphism. Assume that there exists a closed embedding $F^*\hra G$, such that
%Let $\chi:F^*\to \C[t,t^{-1}]^*$ denote the character given by
$$\chi|_{F^*}(x)=t^{-v(x)}\cdot c^{v(x)},$$
for some $c\in \C^*$. Let $R$ be an $R_c$-algebra. Then viewing 
$\chi$ as a homomorphism $G\to R^*$, we have
$$H^{sm}_0(G,R_\chi)=0,$$
where $R_\chi$ is $R$ with the $G$-action given by $\chi$.
Assume in addition that
$G=F^*\ltimes F^n$, and $\chi$ is trivial on $F^n$.
Then $H^{sm}_*(G,R_\chi)=0$.
\end{lemma}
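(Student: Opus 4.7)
The plan is to exploit that under the hypothesis, for a uniformizer $\pi \in F^* \subset G$ with $v(\pi)=1$, the element $\chi(\pi) - 1 = t^{-1}c - 1 = (c-t)/t$ is a unit in $R_c = \C[t,t^{-1}][1/(t-c)]$, hence in any $R_c$-algebra $R$. This single observation drives both parts of the lemma.

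For the first claim, I would observe that the $G$-coinvariants $(R_\chi)_G$ are a quotient of the $F^*$-coinvariants $(R_\chi)_{F^*}$ via the closed embedding $F^* \hra G$, so it suffices to show the latter vanishes. For any $r \in R_\chi$, setting $v := (t/(c-t)) \cdot r$ one computes $\pi v - v = ((c-t)/t) \cdot v = r$, so every element of $R_\chi$ lies in the span of $\{gw - w : g \in F^*, w \in R_\chi\}$.

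For the second claim I would follow the structure of the proof of Lemma \ref{sm-hom-char-lem} essentially verbatim. Since $\chi|_{F^n} = 1$ and $F^n$ is a union of compact open subgroups, the functor $M \mapsto M_{F^n}$ is exact (by the Example following the definition of smooth homology), $(R_\chi)_{F^n} = R_\chi$, and the standard spectral sequence collapses to give $H^{sm}_*(G, R_\chi) \simeq H^{sm}_*(F^*, R_\chi)$. Similarly, $\OO_F^* \subset F^*$ is a compact open subgroup acting trivially on $R_\chi$ (because $v = 0$ on units forces $\chi|_{\OO_F^*} = 1$), so the same argument reduces the computation to $H_*(\Z, R_\chi)$, where $\Z = F^*/\OO_F^*$ is generated by the class of $\pi$. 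Using the resolution $0 \to \C[t,t^{-1}] \to \C[t,t^{-1}] \to \C \to 0$ of the trivial $\C[\Z]$-module (with the first map being multiplication by $t-1$), this homology is computed by the two-term complex $R_\chi \to R_\chi$ whose differential is $T - 1$, where $T$ is multiplication by $\chi(\pi) = t^{-1}c$. Since $T - 1 = (c-t)/t$ is a unit in $R$, this complex is acyclic and $H_*(\Z, R_\chi) = 0$.

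There is no substantial obstacle; the one point requiring some care is the reduction via exact coinvariants functors, but this is precisely the spectral sequence argument already invoked (implicitly) in Lemma \ref{sm-hom-char-lem}, and it applies equally well with coefficient ring $R$ in place of $\C$ since $R_\chi$ is a smooth $G$-representation (the character $\chi$ being locally constant). The entire proof boils down to the invertibility of $(c-t)/t$ in every $R_c$-algebra.
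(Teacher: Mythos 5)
Your proof is correct and follows essentially the same route as the paper, which simply declares the argument ``completely parallel'' to Lemma \ref{sm-hom-char-lem} (reduce via the exactness of $F^n$- and $\OO_F^*$-coinvariants to $H_*(\Z,R_\chi)$, computed by the standard two-term resolution) with the key input being the invertibility of $\chi(\pi)-1$ in any $R_c$-algebra. Your explicit treatment of the $H_0$ statement for general $G$, via the surjection $(R_\chi)_{F^*}\to (R_\chi)_G$, is exactly the intended argument.
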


\begin{proof}
The proof is completely parallel to that of Lemma \ref{sm-hom-char-lem}.
We just have to use the fact that $t/c-1$ is invertible in $R_c$.
\end{proof}

We also have the following versions of Corollary \ref{G/H-cor} and Proposition \ref{hom-van-prop}.
The proofs are similar, so we omit them.

\begin{cor}\label{G/H-R-cor}
Let $R$ be a commutative $\C$-algebra.
For a closed subgroup $H\sub G$ a $G$-equivariant $R$-line bundle $L$ on $G/H$, one has
$$H^{sm}_i(G,\SS(G/H,L))\simeq H^{sm}_i(H,\chi_L\cdot (\De_H/\De_G|_H)),$$
where $\chi_L:H\to R^*$ is the character given by the action of $H$ on the fiber of $L$ at the point with the stabilizer $H$
(and we view $\De_H/\De_G|_H$ as taking values in $\C^*\sub R^*$).
\end{cor}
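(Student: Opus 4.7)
The plan is to transport the chain of reasoning used to deduce Corollary \ref{G/H-cor} from Lemma \ref{hom-ind-rep-lem} verbatim into the $R$-coefficient setting. The category $\MM(G;R)$ of smooth $G$-representations in $R$-modules is equivalent to the category of nondegenerate modules over $R\ot\HH(G)$, and the $G$-coinvariants functor corresponds to $M\mapsto R\ot_{R\ot\HH(G)}M$, whose left derived functors give $H^{sm}_i(G,-)$ with $R$-coefficients.

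The first step is to show that $R\ot\HH(G)$ is projective as a left module over itself, which is the analog of Lemma \ref{Hecke-proj-lem}. The proof there applies unchanged: $R\ot\HH(G)$ is the filtered colimit of the projective summands $R\ot\HH(G)e_K$ for compact open $K\sub G$, and for any short exact sequence in $\MM(G;R)$ the corresponding inverse system $(e_K M)_K$ has surjective transition maps (since $e_{K'}=e_{K'}e_K$ for $K\sub K'$), so the Mittag--Leffler argument continues to apply.

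The second step is to establish the $R$-coefficient analog of Lemma \ref{hom-ind-rep-lem}. For $M\in\MM(H;R)$ the compact induction still satisfies
\[ \ind_H^G(M)\simeq (R\ot\HH(G))\ot_{R\ot\HH(H)}\bigl(M\cdot(\De_H/\De_G|_H)\bigr), \]
because the identification of Lemma \ref{ind-tensor-lem} is given by an explicit $\HH(G)$-linear integration formula that does not involve the coefficient ring. The right adjoint of $\ind_H^G$ is $\Hom_{R\ot\HH(G)}(R\ot\HH(G),-)$, which is exact by the first step, so $\ind_H^G$ preserves projectives. Applying $\ind_H^G$ to a projective resolution of $M\cdot(\De_H/\De_G|_H)$ in $\MM(H;R)$ yields a projective resolution of $\ind_H^G M$ in $\MM(G;R)$, and combining this with the tensor product description of $G$-coinvariants gives
\[ H^{sm}_i(G,\ind_H^G M)\simeq H^{sm}_i\bigl(H,M\cdot(\De_H/\De_G|_H)\bigr) \]
for all $i\ge 0$.

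To finish, I would identify $\SS(G/H,L)$ as a smooth $G$-representation in $R$-modules with the compact induction $\ind_H^G(R_{\chi_L})$, where $R_{\chi_L}$ denotes $R$ with the $H$-action twisted by $\chi_L$. This identification is the standard one and carries over to the $R$-linear setting with no change, since it depends only on the definition of an $R$-line bundle through transition functions valued in $R^*$. Applying the isomorphism of the second step to $M=R_{\chi_L}$ yields the corollary. There is no substantive obstacle; the only point to track is that every natural transformation in the argument is $R$-linear, which holds automatically because each isomorphism base-changes from $\C$ to $R$ along identities of $\HH(G)$-linear formulas.
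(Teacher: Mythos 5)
Your proposal is correct and follows exactly the route the paper intends: the authors omit the proof of Corollary \ref{G/H-R-cor} precisely because it is the same chain (projectivity of the Hecke algebra over itself, the Cartier induction--tensor identity, preservation of projectives by compact induction, and the identification of $\SS(G/H,L)$ with a compactly induced representation) transported to coefficients in $R$, which is what you do. No gaps.
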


\begin{prop}\label{hom-van-R-prop}
Assume that the action of $G$ on $X$ is locally separated, and let $F$ be a $G$-equivariant sheaf of $R$-modules on $X$ (where $R$ is a commutative ring). 
Fix $i_0\ge 0$. Assume that for every $G$-orbit $\Om\sub X$, one has
$H_i(G,\SS(\Om,F|_{\Om}))=0$ for $i\le i_0$. Then $H_i(G,\SS(X,F))=0$ for $i\le i_0$.
\end{prop}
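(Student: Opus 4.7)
The plan is to follow the proof of Proposition \ref{hom-van-prop} verbatim, after verifying that the structural lemmas of Section \ref{G-eq-sheaves-sec} survive the addition of the coefficient ring $R$. The authors announce this in Section \ref{sh-coef-sec} (``All the results of Section \ref{G-eq-sheaves-sec} extend naturally to this setup''), so the proof is essentially a bookkeeping exercise, but the steps need to be spelled out.

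First, I would record the $R$-coefficient analogues of the auxiliary results. For $G$ acting trivially on $Y$, the category $\MM(G,Y;R)$ is equivalent to strongly nondegenerate $R\otimes\HH(G)\otimes\SS(Y)$-modules. I would introduce the projective objects $P_{K,L}^{R}$ corresponding to $R\otimes\HH(G)e_K\otimes\SS(Y)e_L$; the argument of Lemma \ref{P-KL-lem} shows these are projective in $\MM(G,Y;R)$, admit surjections onto any object, and their stalks $(P_{K,L}^R)_y$ are projective as $R\otimes\HH(G)$-modules (since tensoring over $\C$ with $R$ preserves projectivity of $\HH(G)e_K$). Taking a resolution by such objects gives the $R$-coefficient version of Lemma \ref{trivial-G-action-lem}: for $F\in\MM(G,Y;R)$ and $y\in Y$,
$$H_i(G,F)_y\simeq H_i(G,F_y),\qquad \SS(Y,H_i(G,F))\simeq H_i(G,\SS(Y,F)).$$
The analogue of Lemma \ref{sh-G-hom-lem} is then immediate from the $G$-equivariant base change isomorphism \eqref{push-forward-stalk-eq}, which only uses the sheaf structure and works identically over $R$.

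Next, in the separated case I would set $Y=X/G$, let $\pi:X\to Y$ be the projection, and form $F':=\pi_!F\in\MM(G,Y;R)$. The fibers of $\pi$ are precisely the $G$-orbits $\Omega$, so the $R$-version of Lemma \ref{sh-G-hom-lem} gives
$$H_i(G,F')_y\simeq H_i(G,\SS(\pi^{-1}(y),F|_{\pi^{-1}(y)}))=H_i(G,\SS(\Omega,F|_\Omega))=0$$
for $i\leq i_0$ by hypothesis. Hence $H_i(G,F')$ has vanishing stalks, so it is the zero sheaf, and the $R$-version of Lemma \ref{trivial-G-action-lem} yields
$$H_i(G,\SS(X,F))\simeq H_i(G,\SS(Y,F'))\simeq\SS(Y,H_i(G,F'))=0.$$

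Finally, to pass from separated to locally separated, I would invoke the $R$-coefficient version of the co-Cech resolution (Lemma \ref{co-Cech-lem}): exactly as in the original proof, local separation supplies an open covering $(U_i)$ of $X$ by $G$-invariant subsets on which the action is separated, and every finite intersection $U_{i_1\ldots i_k}$ also carries a separated $G$-action. Applying the separated case to each $U_{i_1\ldots i_k}$ gives $H_i(G,\SS(U_{i_1\ldots i_k},F))=0$ for $i\leq i_0$, and the long exact sequence of smooth $G$-representations associated with the co-Cech complex produces the same vanishing for $H_i(G,\SS(X,F))$. The only genuine point requiring care is verifying that the derived-functor formalism (projective resolutions, spectral sequences from truncations of the co-Cech complex) is available in $\MM(G,Y;R)$ despite the presence of $R$; this reduces to the observation that $\HH(G)e_K$ remains projective after extension of scalars to $R$, which is automatic since $\C\to R$ is flat as a map of $\C$-algebras.
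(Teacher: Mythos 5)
Your proposal is correct and is exactly the argument the paper intends: the authors omit the proof of Proposition \ref{hom-van-R-prop} with the remark that it is ``similar'' to that of Proposition \ref{hom-van-prop}, and your write-up is precisely that proof transported to $R$-coefficients, with the only genuinely new point (that $R\ot\HH(G)e_K$ remains projective over $\HH(G)$ because $R$ is free over $\C$) correctly identified and handled.
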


\subsection{An extension result for coinvariants of Schwartz sections}

%Let $F$ be a non-archimedean local field.
We formulate a general result in the framework of $l$-spaces and $l$-groups, which extends the classical \cite[Thm.\ 6.9]{BZ} on invariant distributions.

\begin{theorem}\label{l-space-coinv-thm}
Let $X$ be an $l$-space equipped with action of an $l$-group $G$, $Z\sub X$ be a $G$-invariant closed subset.
Assume that the action of $G$ on $Z$ is constructible (i.e., the graph of the action in $Z\times Z$ is constructible).
%there exists a finite $G$-invariant stratification $(Z_i)$ of $Z$ 
%such that the action of $G$ on each $Z_i$ is separated (i.e., the graph of the action is closed).
%smooth algebraic variety equipped with an action of an algebraic group $G$ (all over $F$),
Let $R$ be a commutative ring, $L$ a $G$-equivariant $R$-line bundle on $X$.
%Let $L_1,L_2$ be $G$-equivariant algebraic line bundles over $X$, and consider $L_{\kappa}:=|L_1|^{\kappa}\ot |L_2|$, where $\kappa\in\C$,
%as a $G(F)$-equivariant line bundle over $X(F)$.
Assume that for every $z\in Z$, one has
\begin{equation}\label{hom-van-assumption-eq}
H_{i}^{sm}(G_z,\chi_{L,z}\cdot \De_{G_z}/\De_G|_{G_z})=0
%H_{i}^{sm}(G_z,\chi_{L,z}\cdot (\De_G)|_{G_z}/\De_{G_z})=0
\end{equation}
for $i=0,1$ (resp., for $i=0$), where $G_z$ is the stabilizer of $z\in Z$.
Then the natural map
\begin{equation}\label{X-Z-L-G-map}
\SS(X-Z,L)_{G}\to \SS(X,L)_{G}
\end{equation}
is an isomorphism (resp., surjective).
\end{theorem}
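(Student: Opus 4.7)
The plan is to exploit the short exact sequence of smooth $G$-representations coming from the closed embedding $Z\hra X$,
\[
0\to \SS(X-Z,L)\to \SS(X,L)\to \SS(Z,L|_Z)\to 0,
\]
and its long exact sequence in smooth $G$-homology $H^{sm}_*(G,-)$. The relevant portion reads
\[
H^{sm}_1(G,\SS(Z,L|_Z))\to \SS(X-Z,L)_G\to \SS(X,L)_G\to \SS(Z,L|_Z)_G\to 0,
\]
so the map \eqref{X-Z-L-G-map} is surjective as soon as $H^{sm}_0(G,\SS(Z,L|_Z))=0$, and an isomorphism provided in addition $H^{sm}_1(G,\SS(Z,L|_Z))=0$. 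The task is thus reduced to establishing these vanishings for $\SS(Z,L|_Z)$.

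Next I would stratify $Z$. Since by hypothesis the graph of the $G$-action on $Z$ is constructible, one produces a finite $G$-invariant filtration
\[
Z=Z_0\supset Z_1\supset\ldots\supset Z_n=\emptyset
\]
in which each inclusion $Z_{k+1}\hra Z_k$ is closed and each open stratum $Y_k:=Z_k\setminus Z_{k+1}$ carries a separated $G$-action. Running through the long exact sequences of smooth $G$-homology attached to the open/closed decompositions $Z_{k+1}\hra Z_k\hookleftarrow Y_k$ and descending induction on $k$, the desired vanishing on $Z$ is reduced to the analogous vanishing on each stratum $Y_k$.

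On a stratum $Y$ with separated $G$-action, Proposition \ref{hom-van-R-prop} (applied to $F$ the sheaf of sections of $L|_Y$) further reduces matters to the orbit-wise statement $H^{sm}_i(G,\SS(\Om,L|_\Om))=0$ for every $G$-orbit $\Om\sub Y$. Choosing $z\in \Om$ identifies $\Om\simeq G/G_z$ equivariantly, and Corollary \ref{G/H-R-cor} then gives
\[
H^{sm}_i(G,\SS(\Om,L|_\Om))\simeq H^{sm}_i\bigl(G_z,\chi_{L,z}\cdot \De_{G_z}/\De_G|_{G_z}\bigr),
\]
which vanishes in the prescribed range ($i=0$ in the surjective case, $i=0,1$ in the isomorphism case) by the hypothesis \eqref{hom-van-assumption-eq}.

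The main step that will require care is the stratification in the second paragraph: extracting, from the constructibility of the graph of the $G$-action on $Z$, a finite filtration by $G$-invariant locally closed subsets on whose open strata the action is separated. The key observation is that, by the analog of \cite[Lem.\ 6.2]{BZ}, constructibility of the graph in $Z\times Z$ corresponds to constructibility of the diagonal in $(Z/G)\times(Z/G)$; stratifying this diagonal into locally closed pieces and pulling back yields the required $G$-invariant strata. Once this is done, the remainder is a routine d\'evissage combining Proposition \ref{hom-van-R-prop} with Corollary \ref{G/H-R-cor}.
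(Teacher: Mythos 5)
Your proposal is correct and follows essentially the same route as the paper's proof: the open/closed long exact sequence reducing to the vanishing of $H^{sm}_{0}$ (and $H^{sm}_1$) of $\SS(Z,L|_Z)$, a d\'evissage via a finite $G$-invariant filtration extracted from constructibility (the paper's Lemma \ref{constructible-lem}), Proposition \ref{hom-van-R-prop} to pass to orbits, and Corollary \ref{G/H-R-cor} plus the hypothesis \eqref{hom-van-assumption-eq}. The only cosmetic difference is that the filtration actually yields \emph{locally} separated actions on the strata rather than separated ones, which is all that Proposition \ref{hom-van-R-prop} requires.
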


Note that by  \cite[Sec.\ 6.15]{BZ}, the condition of constructibility of action is satisfied if $G=\und{G}(F)$, $X=\und{X}(F)$, where $\und{X}$ is a variety over a local field $F$,
$\und{G}$ is a linear algebraic group over $F$ acting algebraically on $\und{X}$.

\begin{lemma}\label{constructible-lem} 
Assume that the action of $G$ on $Z$ is constructible. Then there exists a filtration $Z=Z^0\supset Z^1\supset\ldots\supset Z^k=\emptyset$
by closed $G$-invariant subsetss, such that the action of $G$ on $Z^i\setminus Z^{i+1}$ is locally separated (see Def.\ \ref{loc-sep-def}).
\end{lemma}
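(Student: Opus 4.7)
The plan is to construct the filtration by Noetherian-type induction, at each stage peeling off a proper closed $G$-invariant subset on whose complement the action is locally separated (in fact, separated). In the setting where this lemma is actually applied, namely when $G = \und{G}(F)$ and $Z = \und{Z}(F)$ come from an algebraic action of a linear algebraic group $\und{G}$ on an $F$-variety $\und{Z}$ (as noted by the authors in the remark citing \cite[Sec.\ 6.15]{BZ}), closed $G$-invariant subsets arising as $F$-points of Zariski-closed $\und{G}$-invariant subvarieties of $\und{Z}$ satisfy the descending chain condition, so the induction will terminate.

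For the inductive step, I would invoke Rosenlicht's theorem on the generic existence of geometric quotients: there exists a nonempty $\und{G}$-invariant Zariski open subvariety $\und{V} \subset \und{Z}$ admitting a geometric quotient $\pi : \und{V} \to \und{V}/\und{G}$ as a separated variety. Setting $V = \und{V}(F) \subset Z$, the graph of the induced $G$-action on $V$ coincides with the fibered product $V \times_{(\und{V}/\und{G})(F)} V$, which is closed in $V \times V$ since the diagonal of the Hausdorff space $(\und{V}/\und{G})(F)$ is closed. Hence the action of $G$ on $V$ is separated, and in particular locally separated in the sense of Definition \ref{loc-sep-def}.

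I would then set $Z^1 = Z \setminus V = (\und{Z} \setminus \und{V})(F)$, which is a proper closed $G$-invariant subset. The restriction of the $G$-action to $Z^1$ remains algebraic and hence constructible, so one iterates the argument on $Z^1$; Noetherian descent on $\dim \und{Z}$ guarantees termination in finitely many steps, producing the required filtration $Z = Z^0 \supset Z^1 \supset \cdots \supset Z^k = \emptyset$.

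The main obstacle I foresee is that this argument leans on the algebraic input via Rosenlicht; a proof formulated purely in the $l$-space framework would instead need to track and strictly decrease a combinatorial complexity of a Boolean decomposition $\Gamma = \bigsqcup_i L_i$ of the graph into locally closed pieces, for instance by isolating a single stratum $L_{i_0}$ whose closure is not meet elsewhere and showing that its image under the projections $p_1,p_2 : Z\times Z \to Z$ generates a proper closed $G$-invariant subset of $Z$. The algebraic route via Rosenlicht is cleaner and suffices for all applications of the lemma in the paper.
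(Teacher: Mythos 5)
Your argument diverges from the paper's, and it has two genuine problems. The more serious one is the step asserting that ``the graph of the induced $G$-action on $V$ coincides with the fibered product $V \times_{(\und{V}/\und{G})(F)} V$''. This is false in general: two $F$-points lying over the same point of the geometric quotient lie in the same \emph{geometric} $\und{G}$-orbit, but the $F$-points of a single geometric orbit typically break up into several $G(F)$-orbits (the splitting is controlled by Galois cohomology of the stabilizer; already $\G_m$ acting on $\A^1$ by $t\cdot x=t^2x$ exhibits this). So the graph of the $G(F)$-action is in general a \emph{proper} subset of the fibered product, and closedness of the fibered product does not yield closedness of the graph. The conclusion (separatedness of the action on $V$) can probably be rescued by showing the graph is open and closed inside the $F$-points of the fibered product --- which requires knowing that $G(F)$-orbits are open in $\und{\Om}(F)$, uniformly over the family of geometric orbits --- but as written the key step is unjustified.

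The second problem is one of generality: the lemma is stated, and proved in the paper, as a purely topological statement about an arbitrary $l$-group acting constructibly on an arbitrary $l$-space, whereas your proof only treats the case of $F$-points of an algebraic action (you acknowledge this, but the general statement is what is being claimed). The paper's argument is both more general and more elementary: it passes to the diagonal $\De\simeq \ov{Z}$ in $\ov{Z}\times\ov{Z}$, which is constructible by hypothesis and \cite[Lem.\ 6.2]{BZ}, and defines $\De^{i+1}\sub\De^i$ as the set of points near which $\De^i$ fails to be closed. Constructibility guarantees, as in \cite[Sec.\ 6.7]{BZ}, that this recursion terminates after finitely many steps, and by construction $\De^i\setminus\De^{i+1}$ is locally closed in $\ov{Z}\times\ov{Z}$, which is precisely local separatedness of the action on the stratum $Z^i\setminus Z^{i+1}$. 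No algebraic input (in particular no Rosenlicht) is needed, and no quotient variety has to be produced.
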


\begin{proof}
Set $\ov{Z}=Z/G$ and let us consider the diagonal $\De\sub \ov{Z}\times \ov{Z}$. As in \cite[Sec.\ 6.7]{BZ}, let us consider the finite filtration
$\De\supset \De^1\supset\ldots\supset \De^k=\emptyset$ by closed subsets, defined recursively as follows: $\De^{i+1}$ is the set of points $(\ov{z},\ov{z})\in \De^i$ such that
$\De^i$ is not closed in any neighborhood of $(\ov{z},\ov{z})$ (in $\ov{Z}\times \ov{Z}$).
By the identification $\De\simeq \ov{Z}$ we get a filtration $\ov{Z}\supset \ov{Z}^1\supset\ldots$ of $\ov{Z}$.
We claim that the corresponding filtration of $Z$ by $G$-invariant closed subspaces satisfies our requirement.
Indeed, by definition, $\De^i\setminus \De^{i+1}$ is locally closed in $\ov{Z}\times \ov{Z}$, hence, in $(Z^i\setminus Z^{i+1})\times (Z^i\setminus Z^{i+1})$. 
Therefore, the diagonal is locally closed in $(Z^i\setminus Z^{i+1})\times (Z^i\setminus Z^{i+1})$, and so the action of $G$ on $Z^i\setminus Z^{i+1}$ is
locally separated.
\end{proof}

\begin{proof}[Proof of Theorem \ref{l-space-coinv-thm}] 
The exact sequence of smooth $G$-representations
$$0\to \SS(X-Z,L)\to \SS(X,L)\to \SS(Z,L|_Z)\to 0$$
and the long exact sequence of smooth $G$-homology show that it is enough to prove the vanishing
$$H_i^{sm}(G,\SS(Z,L|_Z))=0$$
for $i=0,1$ (resp., $i=0$).

Using Lemma \ref{constructible-lem} we reduce to the case when the action of $G$ on $Z$ is locally separated.
Now applying Proposition \ref{hom-van-R-prop}, we are reduced to showing that for each $G$-orbit $\Om\sub Z$, one has
$$H_i^{sm}(G,\SS(\Om,L|_\Om))=0$$
for $i=0,1$ (resp., $i=0$).
%Namely, suppose we have a filtration $\emptyset=Z_0\sub Z_1\sub\ldots\sub Z_r=Z$ by $G$-invariant closed subvarieties.
%Then the embedding $X-Z\to X$ factors as the composition
%$$X-Z=X-Z_r\to X-Z_{r-1}\to\ldots \to X-Z_0=X,$$
%so it is enough to prove the assertion for each pair $(X-Z_{i-1},Z_i-Z_{i-1})$ instead of $(X,Z)$.
%Thus, we can assume that the action of $G$ on $Z$ is factorizable.
%Now applying the long exact sequence of smooth homology to the exact sequence
%$$0\to \SS(X-Z,L_{\kappa})\to \SS(X,L_{\kappa})\to \SS(Z,L_{\kappa})\to 0$$ 
%we are reduced to the following result.
It remains to apply Corollary \ref{G/H-R-cor} and the vanishing \eqref{hom-van-assumption-eq} for $i=0,1$ (resp., $i=0$).
\end{proof}

%\begin{remark}
%The surjectivity of the map \eqref{X-Z-L-G-map} can be proved similarly to \cite[Thm.\ 6.9]{BZ} assuming
%the vanishings \eqref{hom-van-assumption-eq} for $i=0$, under the weaker assumption that the action of $G$ on $Z$ is constructible 
%(see \cite[Sec.\ 6.6]{BZ}).
%\end{remark}

\subsection{Corollary for admissible stacks}

Let $\XX=[X/G]$ be an admissible stack of finite type over $F$ (where $F$ is a nonarchimedean local field, $G=\GL_N$),  $\LL$ a line bundle on $\XX$, $L$ the corresponding
$G$-equivariant line bundle on $X$.

We can associate with $L$ and with a number $\kappa\in\C$, a $G(F)$-equivariant $\C$-line bundle $|L|^{\kappa}$ on $X(F)$, and by passing to $G(F)$-coinvariants, the Schwartz
space $\SS(\XX,|\LL|^{\kappa})$ (see Sec.\ \ref{schw-stacks-sec}). On the other hand, we can define a $G(F)$-equivariant $\C[t,t^{-1}]$-line bundle $|L|_t$ on $X(F)$ and a $\C[t,t^{-1}]$-module
$\SS(\XX,|\LL|_t)$ defined as $G(F)$-coinvariants of $\SS(X(F),|L|_t)$ (see Sec.\ \ref{sh-coef-sec}). 
%We can also extend scalars using an extension of algebras $\C[t,t^{-1}]\hra R$, and consider an $R$-module
%$\SS(\XX,|\LL|_t\ot R)$.

For an open substack $\UU\sub\XX$ and $\kappa\in\C$, there is a natural map
$$j_{\UU\to \XX,\LL,\kappa}:\SS(\UU,|\LL|^{\kappa})\to \SS(\XX,|\LL|^{\kappa}).$$
Also, for every commutative $\C[t,t^{-1}]$-algebra $R$, we can consider a natural map
$$j^R_{\UU\to \XX,\LL}:\SS(\UU,|\LL|_t\ot R)\to \SS(\XX,|\LL|_t\ot R).$$

For every point $x\in \XX$, the action of the group $\Aut(x)$ on the line bundle $\LL$ gives a character
$$\chi_{\LL,x}:\Aut(x)\to \G_m.$$
On the other hand, we have the algebraic modular character $\De^{alg}_{\Aut(x)}:\Aut(x)\to \G_m$.

For a character $\chi:\Aut(x)\to \G_m$ and a cocharacter $\la^\vee:\G_m\to \Aut(x)$, we define an integer $m=\lan\chi,\la^\vee\ran$ so that
$$\chi(\la^\vee(a))=a^m.$$

\begin{lemma}\label{surjective-lem-nonarch}
(i) Fix $\kappa\in\C$.
Assume that for every point $x\in \XX(F)\setminus \UU(F)$, 
there exists an algebraic homomorphism $\la^\vee:\G_m\to \Aut(x)$ (defined over $F$), such that
$$q^{\kappa\cdot \lan \chi_{\LL,x},\la^\vee\ran}\neq q^{-\lan\De^{alg}_{\Aut(x)},\la^\vee\ran},$$
where $q$ is the number of elements in the residue field. Then
$j_{\UU\to \XX,\LL,\kappa}$ is surjective. Assume in addition that for each $x\in \XX(F)\setminus \UU(F)$,
we have $\Aut(x)\simeq \G_m\ltimes \G_a^n$, where $\la^\vee:\G_m\to \Aut(x)$ given by the natural embedding. 
%satisfies the above condition. 
Then $j_{\UU\to \XX,\LL,\kappa}$ is an isomorphism.

\noindent
(ii) Let $R$ be a commutative $\C[t,t^{-1}]$-algebra. Assume that for every point  $x\in \XX(F)\setminus \UU(F)$, 
there exists an algebraic homomorphism $\la^\vee:\G_m\to \Aut(x)$, such that
$$t^{\lan \chi_{\LL,x},\la^\vee\ran}-q^{-\lan\De^{alg}_{\Aut(x)},\la^\vee\ran}\in R^*.$$
Then the map $j^R_{\UU\to \XX,\LL}$ is surjective. If in addition $\Aut(x)\simeq \G_m\ltimes \G_a^n$ and $\la^\vee:\G_m\to \Aut(x)$ is given by the natural embedding,
then  $j^R_{\UU\to \XX,\LL}$ is an isomorphism.

\noindent
(iii) Let $D\sub \C$ be an open region. Assume that for every point  $x\in \XX(F)\setminus \UU(F)$, 
there exists an algebraic homomorphism $\la^\vee:\G_m\to \Aut(x)$, such that
$$\operatorname{Re}(\kappa)\cdot \lan\chi_{\LL,x},\la^\vee\ran\neq \lan-\De^{alg}_{\Aut(x)},\la^\vee\ran$$
for every $\kappa\in D$. Then the map
$$j^{hol}_{\UU\to \XX,\LL}:\SS^{hol}(\UU\times D,|\LL|^{\kappa})\to\SS^{hol}(\XX\times D,|\LL|^{\kappa})$$
is surjective.
\end{lemma}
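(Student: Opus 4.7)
The unified plan is to apply Theorem~\ref{l-space-coinv-thm} to the $l$-space $X(F)$ with the $G(F)$-action ($G=\GL_N$) and $Z=X(F)\setminus U(F)$, where $U\sub X$ is the preimage of $\UU$; this action is constructible by the remark following the theorem. For every $z\in Z$ the stabilizer $G(F)_z$ equals $\Aut(x)(F)$, where $x$ is the image of $z$ in $\XX$. Since $\GL_N$ is unimodular, $\De_G|_{G_z}=1$, the factor $|\bigwedge^{top}\fg|^{-1}$ appearing in the Schwartz-space description contributes the trivial character, and the combined character $\chi_{L,z}\cdot\De_{G_z}/\De_G|_{G_z}$ equals $|\chi_{\LL,x}|^{\kappa}\cdot|\De^{alg}_{\Aut(x)}|$ in part (i), with $|\cdot|^{\kappa}$ replaced by $|\cdot|_t$ in parts (ii) and (iii). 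The three parts then differ only in the choice of coefficient ring.

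For part (i), take $R=\C$. At $g=\la^{\vee}(\pi)$ for a uniformizer $\pi$ (with $|\pi|=q^{-1}$), the character takes the value $q^{-\kappa\lan\chi_{\LL,x},\la^{\vee}\ran-\lan\De^{alg}_{\Aut(x)},\la^{\vee}\ran}$, which differs from $1$ precisely under the stated hypothesis; since $(\C_{\chi})_{G_z}=\C/(\chi(g)-1)\C$, one obtains $H_0^{sm}=0$, hence surjectivity by Theorem~\ref{l-space-coinv-thm}. For the ``in addition'' isomorphism, one invokes Lemma~\ref{sm-hom-char-lem}: $\Aut(x)(F)\simeq F^{*}\ltimes F^{n}$, and our character is automatically trivial on $F^{n}$ (algebraic characters of $\G_{a}^{n}$ vanish) and nontrivial on $F^{*}$ by the same computation, yielding $H_{*}^{sm}=0$. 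Part (ii) is handled identically with coefficient ring $R$ and $R$-line bundle $|L|_{t}\ot R$; the character at $\la^{\vee}(\pi)$ is $t^{-m}q^{-n}$ with $m=\lan\chi_{\LL,x},\la^{\vee}\ran$ and $n=\lan\De^{alg}_{\Aut(x)},\la^{\vee}\ran$, and $\chi(g)-1\in R^{*}$ is equivalent (after multiplication by the unit $-t^{m}q^{n}$) to $t^{m}-q^{-n}\in R^{*}$, which is the hypothesis. The ``in addition'' isomorphism uses Lemma~\ref{sm-hom-char-R-lem}.

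For part (iii), apply Theorem~\ref{l-space-coinv-thm} directly with $R=\HH(D)$ and the $\HH(D)$-line bundle $|L|_{t}\ot\HH(D)$ obtained by base change along $\C[t,t^{-1}]\hra\HH(D)$, $t\mapsto q^{\kappa}$. Under the identifications $\SS(X,|L|_{t}\ot\HH(D))\simeq\SS^{hol}(X\times D,|L|^{\kappa})$ and its $G(F)$-coinvariants analog, surjectivity of the coinvariants map is exactly surjectivity of $j^{hol}_{\UU\to\XX,\LL}$. At $g=\la^{\vee}(\pi)$ the character takes the value $q^{-m\kappa-n}$; the hypothesis $\Re(\kappa)m\neq-n$ for all $\kappa\in D$ forces $|q^{-m\kappa-n}|\neq 1$, hence $q^{-m\kappa-n}\neq 1$, throughout $D$, so the holomorphic function $q^{-m\kappa-n}-1$ is nowhere-vanishing on $D$ and therefore invertible in $\HH(D)$. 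As in parts (i)--(ii) this yields $H_{0}^{sm}(G_{z},\HH(D)_{\chi})=0$. The main point requiring care is that the internal machinery of Theorem~\ref{l-space-coinv-thm} (Proposition~\ref{hom-van-R-prop}, Corollary~\ref{G/H-R-cor}) carries through cleanly with the infinite-dimensional coefficient ring $\HH(D)$; however those proofs rely only on formal properties of smooth homology over a commutative $\C$-algebra, and the sole algebraic input---invertibility of $\chi(\la^{\vee}(\pi))-1$ in $R$---is exactly what the hypothesis of (iii) provides.
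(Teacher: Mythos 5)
Your proposal is correct and follows essentially the same route as the paper: reduce via Theorem \ref{l-space-coinv-thm} to vanishing of smooth homology of the stabilizers with coefficients in the character $|\chi_{\LL,x}|^{\kappa}\cdot|\De^{alg}_{\Aut(x)}|$ (resp.\ its $R$-valued analogue), verify nontriviality/invertibility of $\chi(\la^\vee(\pi))-1$ from the stated hypotheses, use Lemmas \ref{sm-hom-char-lem} and \ref{sm-hom-char-R-lem} for the higher homology needed for the isomorphism statements, and obtain (iii) from the $R=\HH(D)$ case via $t\mapsto q^{\kappa}$. Your write-up is in fact somewhat more explicit than the paper's on the character computations and on why $q^{-m\kappa-n}-1$ is invertible in $\HH(D)$.
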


\begin{proof}
(i) Let $U\sub X$ be the $G$-invariant open subset corresponding to $\UU\sub\XX$. For $x\in X(F)$, we have $\Aut_{\XX}(x)=G_x\sub G$, the stabilizer of $x$.
Since $\De_{G(F)}=1$, by Theorem \ref{l-space-coinv-thm}, to check that $j_{\UU\to \XX,\LL,\kappa}$ surjective (resp., an isomorphism), we need to know that
$$H_i^{sm}(G_x(F),|\chi_{\LL,x}|^{\kappa}\cdot |\De^{alg}_{G_x}|)=0$$
for $x\not\in U(F)$ and $i=0$ (resp., $i\le 1$).
For $i=0$ we are just computing the coinvariants, and the assumption implies that the corresponding character $G_x(F)\to \C^*$ is nontrivial.
To compute higher homology we use the extra assumption on the structure of $\Aut(x)$ and apply Lemma \ref{sm-hom-char-lem}.

\noindent
(ii) This is parallel to (i) using Lemma \ref{sm-hom-char-R-lem}.

\noindent
(iii) This follows from (ii) with $R=\HH(D)$, viewed as a $\C[t,t^{-1}]$-algebra via $t\mapsto q^{\kappa}$.
\end{proof}

\section{Coinvariants for Schwartz sections: archimedean case}\label{coinv-arch-sec}

\subsection{Formulation of the extension result and the first reduction}

Now we assume that $F=\R$ or $\C$.
We want to prove the following version of \cite[Thm.\ B.0.2]{AG-sm-tr}\footnote{The modular characters appear in \cite[Thm.\ B.0.2]{AG-sm-tr} with the opposite sign
due to a different convention.}. 

\begin{theorem}\label{coinv-thm} 
Let $X$ be the set of $F$-points of a smooth algebraic $F$-variety equipped with an algebraic action of a connected linear algebraic group $G$ (over $F$) with
the Lie algebra $\fg$,
and let $Z\sub X$ be a $G$-invariant closed subvariety. Let $V$ (resp., $L_1,L_2$) be an algebraic $G$-equivariant vector bundle (resp., line bundles) over $X$.
As in Sec.\ \ref{Schwartz-var-sec}, consider $E_{\kappa}=V\ot |L_1|^{\kappa}\ot |L_2|$, where $\kappa\in \C$.
For $z\in Z$ we denote by $G_z\sub G$ the stabilizer subgroup, and by $\fg_z$ its Lie algebra.
Let $\chi_{L_i,z}:G_z\to \G_m$ denote the character of action on $L|_z$.
%We denote 
%by $d\chi_{L_i,z}:\fg_{z}\to F$ the differentials of the characters $\chi_{L_i,z}$. 
Let $D$ be either an open region in $\C$ or a single point $D=\{\kappa_0\}$.
Assume that for any $z\in Z$, $\kappa\in D$ and $m\ge 0$ we have
$$[|\chi_{L_1,z}|^{\kappa}\ot |\chi_{L_2,z}|\ot V|_z\ot S^m(N^\vee_{Gz})|_z\ot \De_{G_z}/\De_G|_{G_z}]_{\fg_{z,r}}=0$$
%$$[|\chi_{L_1,z}|^{\kappa}\ot |\chi_{L_2,z}|\ot V|_z\ot S^m(N^\vee_{Gz})|_z\ot ((\De_G)|_{G_z}/\De_{G_z})]_{\fg_{z,r}}=0$$
for some reductive subalgebra $\fg_{z,r}\sub \fg_z$, where $N^\vee_{Gz}$ is the conormal bundle to $Gz$
Then for any open $D'\sub D$ such that $\ov{D'}$ is compact and is contained in $D$, the map on coinvariants
%the sheaf associated with the presheaf on $D_0$,
%$$D\mapsto [\SS^{hol}(X\times D,E_{\kappa})/\SS^{hol}((X-Z)\times D,E_{\kappa})]_{\fg},$$
$$[\SS^{hol}(X\times D,E_{\kappa})/\SS^{hol}((X-Z)\times D,E_{\kappa})]_{\fg}\to [\SS^{hol}(X\times D',E_{\kappa})/\SS^{hol}((X-Z)\times D',E_{\kappa})]_{\fg}$$
is zero. In the case $D=\{\kappa_0\}$, the conclusion is
$$[\SS(X,E_{\kappa_0})/\SS(X-Z,E_{\kappa})]_{\fg}=0$$
\end{theorem}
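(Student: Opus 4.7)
The plan is to adapt the proof of \cite[Thm.\ B.0.2]{AG-sm-tr} to the setting with a holomorphic parameter, using Proposition \ref{Borel-thm-prop} in place of the classical Borel theorem. First, I would use the injectivity of $\tau_D$ from Proposition \ref{Borel-thm-prop}: an element of $\SS^{hol}(X\times D, E_\kappa)/\SS^{hol}((X-Z)\times D, E_\kappa)$ is determined by its image in $\varprojlim \SS^{hol}(X\times D, E_\kappa)/F_Z^i\SS^{hol}(X\times D, E_\kappa)$, and after restricting to $D'$ this image lies in the image of $\tau_{D'}$. Consequently, it suffices to show that modulo the closure of the $\fg$-action, each graded piece
\[
F_Z^m\SS^{hol}(X\times D', E_\kappa)/F_Z^{m+1}\SS^{hol}(X\times D', E_\kappa) \simeq \SS^{hol}(Z\times D', S^m N_Z^\vee \otimes E_\kappa|_Z)
\]
(identified via Lemma \ref{filtr-lem}) has vanishing $\fg$-coinvariants, and then combine these layer by layer via the fact that $\overline{D'}\subset D$ is compact to promote the graded vanishings to a vanishing of the whole image in $[\SS^{hol}(X\times D', E_\kappa)/\SS^{hol}((X-Z)\times D', E_\kappa)]_\fg$.

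Second, I would stratify $Z$ into $G$-invariant locally closed Nash subvarieties consisting of orbits of constant dimension, and, using existence of Nash partitions of unity (as in \cite[App.\ B]{AG-sm-tr}, \cite[Thm.\ 5.2.1]{AG}) and induction on the dimension of the open stratum, reduce the problem to the case where $Z = Gz$ is a single homogeneous space $G/G_z$. Here I also use that the Schwartz space depending holomorphically on $\kappa$ behaves well with respect to partitions of unity and tempered change of frame.

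Third, on a single orbit $Gz \simeq G/G_z$, the Schwartz sections $\SS^{hol}(Gz\times D', \cdot)$ of a $G$-equivariant bundle form a (smooth, holomorphically parametrized) induced representation. By the archimedean analog of Frobenius reciprocity for coinvariants, the $\fg$-coinvariants of this induced representation are naturally isomorphic to $\HH(D')\,\hat\otimes$ the $\fg_z$-coinvariants of the fiber at $z$ twisted by $\Delta_{G_z}/\Delta_G|_{G_z}$. Explicitly, on each graded piece one arrives at the $\fg_z$-coinvariants of the finite-dimensional module $|\chi_{L_1,z}|^\kappa \otimes |\chi_{L_2,z}| \otimes V|_z \otimes S^m(N^\vee_{Gz})|_z \otimes \Delta_{G_z}/\Delta_G|_{G_z}$, tensored with the holomorphic parameter space. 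The hypothesis assures that the coinvariants of this finite-dimensional module with respect to the reductive subalgebra $\fg_{z,r}\subset \fg_z$ vanish; since coinvariants with respect to a reductive Lie algebra are exact on finite-dimensional modules and every $\fg_z$-coinvariant is in particular a $\fg_{z,r}$-coinvariant, the $\fg_z$-coinvariants vanish too. This kills the graded pieces.

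The main obstacle is the topological subtlety in the first step: in the archimedean setting $\fg$-coinvariants are by definition quotients by the \emph{closure} of the span of $\{Xv : X\in\fg\}$, so vanishing of each successive graded piece only gives that the image of $\tau_{D'}$ is in the closure of the $\fg$-action after passing to any finite order jet, and one must upgrade this to the vanishing of the actual element in the coinvariants space for $D'$. This is exactly where the Borel-type factorization statement of Proposition \ref{Borel-thm-prop} (with the shrinking from $D$ to $D'$) becomes essential: it lets one realize a convergent successive approximation as an honest Schwartz section over $D'$, thereby converting the formal graded vanishing into the required coinvariant vanishing, in the same way that the archimedean arguments in \cite{AG-sm-tr} use completeness of Schwartz spaces together with Borel's theorem to pass from formal jets to genuine sections. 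The case $D = \{\kappa_0\}$ is a routine specialization of this argument with $\SS^{hol}$ replaced by $\SS$ throughout.
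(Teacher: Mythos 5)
Your outer framework (Borel's theorem with the holomorphic parameter, i.e.\ Proposition \ref{Borel-thm-prop} plus Lemma \ref{filtr-lem}, to reduce to the vanishing of $\fg$-coinvariants of the graded pieces $\SS^{hol}(Z\times D, S^mN_Z^\vee\ot E_\kappa|_Z)$, with the shrinking from $D$ to $D'$ absorbing the topological issue of closures) matches the paper. The genuine gap is in your middle step: you propose to stratify $Z$ by orbit dimension and then ``reduce to a single orbit $Z=Gz$'' using partitions of unity. This reduction is not available in the archimedean setting. A partition of unity is subordinate to an \emph{open} cover, and individual $G$-orbits are not open in a stratum of constant orbit dimension (the orbit space is a continuum, typically non-Hausdorff); there is no archimedean analogue of the $l$-space/sheaf machinery (stalks, base change, Proposition \ref{hom-van-prop}) that makes the orbit-by-orbit reduction legitimate in the non-archimedean case. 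This is precisely why the paper (following \cite{AG-sm-tr}) stratifies $Z$ so that the action on each stratum is \emph{factorizable} --- the schematic image $Y$ of $(g,x)\mapsto(x,gx)$ in $Z\times Z$ is smooth and $G\times Z\to Y$ is smooth --- and then runs a \emph{relative} homogeneous-space argument treating all orbits of a stratum simultaneously: $G\times Z$ is a torsor over $Y$, the coinvariants are described via Proposition \ref{coinv-descr-prop}, and the needed surjectivity is Corollary \ref{torsor-cor} (built on Proposition \ref{H-equiv-prop} for group schemes over a base). Your appeal to ``Frobenius reciprocity for coinvariants'' on a single orbit is the fiber of this relative statement, but without the family version over $Y$ it does not yield the vanishing for the whole stratum.

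A secondary inaccuracy: you assert that the graded pieces produce the module $S^m(N^\vee_{Gz})|_z$ appearing in the hypothesis. In fact the filtration with respect to $Z$ (or a stratum of it) produces $S^m(N^\vee_Z)|_z$, which is only the symmetric power of a \emph{subspace} of $N^\vee_{Gz}|_z$. The hypothesis controls the $\fg_{z,r}$-coinvariants of $S^m(N^\vee_{Gz})|_z$, and one needs the reductivity of $\fg_{z,r}$ to split off $S^m(N^\vee_Z)|_z$ as a direct summand and deduce the vanishing for it; this is the actual role of the reductive subalgebra in the statement (as the paper notes in the footnote about the corresponding gap in \cite{AG-sm-tr}). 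Your use of reductivity --- only to pass from $\fg_{z,r}$-coinvariants to $\fg_z$-coinvariants --- misses this point; that implication holds for any subalgebra since $\fg_{z,r}M\sub\fg_zM$.
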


We follow the same steps as in \cite{AG-sm-tr}. 
%We give a proof in the archimedean case, pointing out necessary changes for the non-archimedean case when needed.
%Note also that in the non-archimedean case one can considerably simplify the argument using \cite[Thm.\ 6.9]{BZ}.

First, we observe that if we have a filtration $Z_1\sub Z_2\sub\ldots Z_r=Z$ by $G$-invariant closed subvarieties then it is enough
to prove the result for each pair $(X-Z_{i-1},Z_i-Z_{i-1})$.

\begin{definition} An action of an algebraic group $G$ on an algebraic variety $Z$ is called {\it factorizable} if $Z$ is smooth, the schematic image $Y$ of the map 
$G\times X\to X\times X:(g,x)\mapsto (x,gx)$ is a smooth variety, and the map $G\times Z\to Y$ is smooth.
\end{definition}

%For an $F$-variety (not necessarily irreducible), equipped with an action of a connected linear algebraic group $G$.
There exists a filtration $Z_1\sub\ldots\sub Z_r=Z$ be $G$-invariant closed subvarieties, such that the action of $G$ on $Z_i-Z_{i-1}$ is factorizable
(see \cite[Thm.\ B.0.11]{AG-sm-tr}). 
Thus, we can assume that $Z$ is smooth and the action of $G$ on $Z$ is {\it factorizable}.

By an analog of Borel's theorem (see Prop.\ \ref{Borel-thm-prop}) together with Lemma \ref{filtr-lem},
% (resp., by Prop.\ \ref{nonarch-Borel-prop} in the non-archimedean case) 
Theorem \ref{coinv-thm} now reduces to the following result (which is an analog of \cite[Thm.\ B.0.12]{AG-sm-tr}), which has to be applied to
$X=Z$ and the bundle $S^m(N^\vee_Z)\ot (V\ot |L_1|^{\kappa}\ot |L_2|)|_Z$.
% (resp., $(|L_1|^{\kappa}\ot |L_2|)|_Z$ in the non-archimedean case). 
Note that to deduce Theorem \ref{coinv-thm} from Theorem \ref{coinv-fact-thm} we use reductivity of
$\fg_{z,r}$, as we have to pass from the space $N^\vee_{Gz}|_z$ to its subspace $N^{\vee}_Z|_z$.\footnote{There seems to be a gap in the proof of \cite[Thm.\ B.0.2]{AG-sm-tr} at a similar step.}
%case $Z=X$, with an additional assumption that $G$ acts on $X$ factorizably.

\begin{theorem}\label{coinv-fact-thm}
Let $(X,G,V,L_1,L_2,D)$ be as in Theorem 
and assume in addition that the action of $G$ on $Z$ is factorizable.
Assume that for any $z\in Z$ and $\kappa\in D$ we have
$$[|\chi_{L_1,z}|^{\kappa}\ot |\chi_{L_2,z}|\ot V|_z\ot \De_{G_z}/\De_G|_{G_z}]_{\fg_{z}}=0,$$
%$$[|\chi_{L_1,z}|^{\kappa}\ot |\chi_{L_2,z}|\ot V|_z\ot ((\De_G)|_{G_z}/\De_{G_z})]_{\fg_{z}}=0,$$
where $\fg_z$ is the Lie algebra of $G_z$.
Then 
$$\SS^{hol}(X\times D,E_{\kappa})_{\fg}=0.$$
%the sheaf associated with the presheaf on $D_0$,
%\begin{equation}\label{D-coinv-presheaf}
%D\mapsto \SS^{hol}(X\times D,E_{\kappa})_{\fg}
%\end{equation}
%is zero.
%$$[(\kappa\cdot d\chi_{L_{1,z}})\ot (d\chi_{L_{2,z}})\ot V|_z\ot S^m(N^\vee_{Gz})|_z\ot ((\De_G)|_{G_z}/\De_{G_z})]_{\fg_{z,r}}=0$$
In the case $D=\{\kappa_0\}$, the conclusion is
$$\SS^{hol}(X,E_{\kappa_0})_{\fg}=0.$$
\end{theorem}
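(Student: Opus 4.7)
The strategy is to use factorizability of the action to reduce the global vanishing to a single-orbit statement, and then to invoke a Frobenius-reciprocity-type identity for $\fg$-coinvariants.

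First, by factorizability, the image $Y$ of the map $G \times X \to X \times X$, $(g,x) \mapsto (x,gx)$, is smooth and $G \times X \to Y$ is smooth. This implies that around every point $z \in X$ with stabilizer $H := G_z$, the space $X$ looks locally (in the smooth topology) like a tube $G \times_H S$, where $S$ is a transverse slice at $z$. Using a tempered partition of unity (as in \cite[Thm.\ 5.2.1]{AG}, noting that the transition functions of $E_\kappa$ are tempered), the vanishing of $\fg$-coinvariants of $\SS^{hol}(X \times D, E_\kappa)$ is reduced to the analogous vanishing on each tube $U = G \times_H S$.

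On such a tube, I would combine Lemma \ref{surj-int-lem} (applied to the smooth surjection $G \times S \to U$) with an archimedean version of the Shapiro-type identity of Lemma \ref{ind-tensor-lem} to get
\[
\SS^{hol}(U \times D, E_\kappa)_\fg \simeq \SS^{hol}(S \times D, E_\kappa|_S \ot \De_H/\De_G|_H)_\fh.
\]
Factorizability forces the stabilizer to be locally constant along the orbit space, so $S$ can be chosen so that $H$ acts trivially on it; the $\fh$-action on the right-hand side is then purely fiberwise, all fibers being isomorphic as $\fh$-representations to the fiber over $z$. Taking $\fh$-coinvariants distributes, yielding
\[
\SS^{hol}(S \times D) \hat{\ot} \bigl[V|_z \ot |\chi_{L_1,z}|^\kappa \ot |\chi_{L_2,z}| \ot \De_{G_z}/\De_G|_{G_z}\bigr]_{\fg_z},
\]
whose second tensor factor vanishes by hypothesis. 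The holomorphic dependence on $\kappa$ is handled via the identification \eqref{tensor-product-eq}, which transports pointwise vanishing in $\kappa$ to vanishing of the full family; the case $D = \{\kappa_0\}$ is formally similar.

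The main obstacle is the archimedean topological subtlety that $\fg$-coinvariants are defined modulo the \emph{closure} of the span of $\xi \cdot v$ for $\xi \in \fg$; the slice-and-Frobenius reduction must produce a genuinely closed subspace rather than a merely dense one, and the tensor factorization in the last display must be valid at the level of topological tensor products. The further claim that the stabilizer is locally constant along a slice also warrants careful verification from the definition of factorizability; should it fail, $S$ must be stratified and the Borel expansion of Proposition \ref{Borel-thm-prop} invoked to reduce to jets at $z$. I would follow the structure of the proof of \cite[Thm.\ B.0.12]{AG-sm-tr} closely to navigate these technicalities.
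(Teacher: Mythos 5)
Your reduction hinges on a slice theorem: that near each $z\in X$ the action locally has the form of a tube $G\times_{G_z}S$, and moreover that $S$ can be chosen with $G_z$ acting trivially (equivalently, locally constant stabilizers along a transversal). Neither of these is supplied by factorizability, and both fail in general. Factorizability only says that the graph $Y\sub X\times X$ of the action (the schematic image of $(g,x)\mapsto(x,gx)$) is smooth and that $G\times X\to Y$ is smooth; it provides no geometric quotient, no transversal meeting nearby orbits exactly once, and no local triviality of the stabilizer group scheme. In the intended applications the stabilizers are non-reductive (e.g.\ $\G_m\ltimes\G_a^n$), so no Luna-type \'etale slice is available, and the stabilizer group scheme over $X$, while smooth, need not be locally constant even up to conjugacy. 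This is not a technicality one can patch by stratifying $S$ and invoking Proposition \ref{Borel-thm-prop}: without a tube there is no $S$ to stratify, and the Frobenius-reciprocity step that carries the whole argument has nothing to apply to.

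The paper's proof (following \cite[Thm.\ B.0.12]{AG-sm-tr}) is built precisely to avoid slices. It first identifies $\fg\cdot\SS^{hol}(X\times D,E_\kappa)$ concretely as the image under $a_*\circ\a$ of the kernel of $p_*$ on $\SS^{hol}(G\times X\times D,p^!E_\kappa)$ (Proposition \ref{coinv-descr-prop}), so that vanishing of coinvariants becomes a surjectivity statement. It then factors the action map $a$ through the smooth surjection $b:G\times X\to Y$ onto the graph of the action, views $G\times X$ as a family of torsors over $Y$ (this is exactly what factorizability buys), and proves the required surjectivity by choosing local sections of $b$ and applying Proposition \ref{H-equiv-prop} to the resulting smooth group schemes (Corollary \ref{torsor-cor}); the pointwise hypothesis enters only through the surjectivity of the fiberwise action map $\fh|_x\ot V_{2,\kappa}|_x\to V_{2,\kappa}|_x$. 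If you want to keep the spirit of your argument, the correct slice-free substitute for your tube is this family of torsors over $Y$. Separately, your appeal to \eqref{tensor-product-eq} to ``transport pointwise vanishing in $\kappa$ to vanishing of the family'' is not a valid step as stated: coinvariants (which involve closures) do not commute with $\hat{\ot}\,\HH(D)$ for free, and the paper instead carries the parameter through every surjectivity statement (Lemma \ref{surj-int-lem}, Proposition \ref{H-equiv-prop}), using Lemma \ref{Frechet-image-lem} where a completed tensor product must be commuted with taking the image of the action map.
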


\subsection{Description of coinvariants}

\begin{lemma}\label{Frechet-image-lem} 
Let $\fg$ be a finite dimensional Lie algebra (over $\R$ or $\C$), and
let $V$ be a nuclear Frechet space with a continuous $\fg$-action, such that $\fg\cdot V$ is closed in $V$.
Then for a nuclear Frechet space $W$ (with the trivial $\fg$-action), we have
$$\fg\cdot (V\hat{\ot} W)=(\fg\cdot V)\hat{\ot} W.$$
\end{lemma}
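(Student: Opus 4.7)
Since $\fg$ is finite dimensional, fix a basis $e_1,\dots,e_n$ and consider the continuous linear map
$$T:V^n\to V,\qquad (v_1,\dots,v_n)\mapsto\sum_{i=1}^n e_i\cdot v_i.$$
By construction the image of $T$ equals $\fg\cdot V$, which is closed in $V$ by hypothesis. By the open mapping theorem for Fr\'echet spaces, the induced surjection $V^n\twoheadrightarrow\fg\cdot V$ is strict, so we obtain a strict short exact sequence of Fr\'echet spaces
$$0\to \fg\cdot V\to V\to V/(\fg\cdot V)\to 0.$$

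The first step is to identify both sides with the image of a single map. Using the natural isomorphism $V^n\hat{\otimes}W\simeq(V\hat{\otimes}W)^n$ (finite direct sums commute with $\hat{\otimes}$), the map $T\hat{\otimes}\id_W$ becomes $(u_1,\dots,u_n)\mapsto\sum_i e_i\cdot u_i$, where $e_i$ acts on $V\hat{\otimes}W$ through the first factor (this uses that $W$ carries the trivial action). Hence
$$\im(T\hat{\otimes}\id_W)=\sum_i e_i\cdot(V\hat{\otimes}W)=\fg\cdot(V\hat{\otimes}W).$$

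The second step invokes exactness of $(-)\hat{\otimes}W$ for nuclear Fr\'echet $W$ (see e.g.\ Grothendieck's results cited earlier in the paper). Applying this to the strict short exact sequence above yields another strict short exact sequence
$$0\to(\fg\cdot V)\hat{\otimes}W\to V\hat{\otimes}W\to (V/(\fg\cdot V))\hat{\otimes}W\to 0,$$
in which $(\fg\cdot V)\hat{\otimes}W$ embeds as a closed subspace of $V\hat{\otimes}W$. Comparing with the right-exact sequence obtained from $V^n\xrightarrow{T}V\to V/(\fg\cdot V)\to 0$, the image of $T\hat{\otimes}\id_W$ coincides with the kernel of the projection to $(V/(\fg\cdot V))\hat{\otimes}W$, which is precisely $(\fg\cdot V)\hat{\otimes}W$. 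Combining with the identification from the previous step gives $\fg\cdot(V\hat{\otimes}W)=(\fg\cdot V)\hat{\otimes}W$.

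The only nontrivial ingredient is the exactness of $(-)\hat{\otimes}W$ on strict exact sequences of Fr\'echet spaces when $W$ is nuclear Fr\'echet, which is standard; the closedness assumption on $\fg\cdot V$ is what guarantees that the map $V^n\to\fg\cdot V$ is \emph{strict}, so that exactness can be applied. I expect no real obstacle beyond bookkeeping of the topological tensor product conventions.
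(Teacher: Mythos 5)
Your proposal is correct and follows essentially the same route as the paper: factor the action map through the strict surjection onto the closed subspace $\fg\cdot V$ followed by the closed embedding into $V$, then use that completed tensor product with a nuclear Fr\'echet space $W$ preserves surjectivity and (closed) injectivity. Choosing a basis to replace $\fg\ot V$ by $V^n$ and phrasing the second step via exactness on strict short exact sequences are only cosmetic differences from the paper's argument.
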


\begin{proof}
By definition $\fg\cdot (V\hat{\ot} W)$ is the image of the action map
$$\fg\ot (V\hat{\ot} W)\to V\hat{\ot} W,$$
which can be identified with the map $(\fg\ot V)\hat{\ot} W\to V\hat{\ot} W$
induced by the action map $\fg\ot V\to V$. By assumption, the latter map factors as the composition
of the surjection $\fg\ot V\to V_0:=\fg\cdot V$ and a closed embedding $V_0\to V$.
Now the assertion follows from the fact that the induced maps
$(\fg\ot V)\hat{\ot} W\to V_0\hat{\ot} W$ and $V_0\hat{\ot} W\to V\hat{\ot} W$ are
still surjective and injective, respectively.
\end{proof}

Let $a:G\times X\to X$ be the action map $(g,x)\mapsto gx$, $p:G\times X\to X$ the projection.
The $G$-equivariant structure on $E_{\kappa}$ gives a (tempered) map of bundles on $G\times X$,
$p^*(E_{\kappa})\rTo{\sim} a^*(E_{\kappa})$, and hence a map
$$\a:p^!(E_{\kappa})\rTo{\sim} a^!(E_{\kappa}),$$ 
where for a smooth morphism $f$ we set $f^!(F):=f^*(F)\ot |\om_f|$
(here we use the identifications $\om_p\simeq \om_a\simeq p_G^*\om_G$).

Similar to \cite{AG-sm-tr}, we consider the kernel of the integration map,
$$\SS^{hol}(G\times X\times D,p^!E_{\kappa})_{0,X}:=\ker(p_*:\SS^{hol}(G\times X\times D,p^!E_{\kappa})\to \SS^{hol}(X\times D,E_{\kappa})).$$
and then we consider the image of this subspace under the above map $\a$,
$$\SS^{hol}(G\times X\times D,a^!E_{\kappa})_{0,X,a}:=\a(\SS^{hol}(G\times X\times D,p^!E_{\kappa})_{0,X}).$$

%We claim that the sheaf associated with \eqref{D-coinv-presheaf} coincides with the sheaf associated with
%$$D\mapsto \SS^{hol}(X\times D,E_{\kappa})/a_*(\SS^{hol}(G\times X\times D,a^!E_{\kappa})_{0,X,a}),$$
%where we consider the integration map along $a:G\times X\to X$.
%The proof should be similar to the one in \cite{AG-sm-tr} using \eqref{tensor-product-eq}.

\begin{prop}\label{coinv-descr-prop} 
%Let $E_{\kappa}$ of a $G$-equivariant bundle on $X$ of the form ???
We have
$$\fg\cdot \SS^{hol}(X\times D,E_{\kappa})=a_*(\SS^{hol}(G\times X\times D,a^*E_{\kappa}\ot|\om_a|)_{0,X,a}).$$
%In the non-archimedean case (with $V$ trivial) we have
%$$\sspan(gv-v \ |\ g\in G(F), v\in \SS^{hol}(X\times D,E_{\kappa}))= a_*(\SS^{hol}(G\times X\times D,a^*E_{\kappa}\ot|\om_a|)_{0,X,a}).$$
\end{prop}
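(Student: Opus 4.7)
The plan is to establish both inclusions using the $G$-action on $G\times X$ defined by $h\cdot(g,x) = (hg,x)$. Under this action, $p$ is $G$-equivariant with trivial target action, while $a$ is equivariant with the original action on $X$; the infinitesimal action by left-translation derivatives $L_\xi$ in the $g$ variable satisfies $p_*(L_\xi\Phi) = 0$ automatically (by left-invariance of Haar) and $a_*\circ\alpha$ is $\fg$-equivariant, i.e.\ $a_*(\alpha(L_\xi\Phi)) = \xi\cdot a_*(\alpha\Phi)$, where $\xi\cdot$ on the right denotes the natural $\fg$-action on $V := \SS^{hol}(X\times D, E_\kappa)$. On product-form elements $\Phi(g,x) = \psi(g)f(x)\,dg$ with $\psi\in C_c^\infty(G)$ and $f\in V$, one computes $p_*\Phi = (\int_G\psi\,dg)\,f$ and $a_*(\alpha\Phi) = T_\psi f := \int_G \psi(g)(g\cdot f)\,dg$, the smeared $G$-action.

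For the inclusion $\fg\cdot V \subseteq a_*(\SS^{hol}(\cdots)_{0,X,a})$ I would apply the Dixmier--Malliavin theorem to the smooth Frechet $G$-representation $V$: it provides, for each $f\in V$, a finite presentation $f = \sum_i T_{\psi_i}f_i = a_*(\alpha\Phi)$ with $\Phi = \sum_i \psi_i(g)f_i(x)\,dg$. For $\xi\in\fg$, the element $\Phi' := L_\xi\Phi$ then lies in the $p_* = 0$ subspace and satisfies $a_*(\alpha\Phi') = \xi\cdot f$, placing $\xi\cdot f$ in the desired image. For the reverse inclusion I would identify $\SS^{hol}(G\times X\times D, p^!E_\kappa) \cong \SS(G)\,\hat\otimes\, V$ as nuclear Frechet spaces via \eqref{tensor-product-eq} after a tempered partition of unity trivializing the bundles; under this identification $p_*$ corresponds to $\int_G\otimes \id_V$ so the $p_* = 0$ subspace becomes $\SS(G)_0\,\hat\otimes\, V$, where $\SS(G)_0 := \ker\int_G$, while $a_*\circ\alpha$ becomes the smeared-action map $\psi\otimes v \mapsto T_\psi v$. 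The classical identity $\SS(G)_0 = \fg\cdot\SS(G)$ for the connected unimodular group $G = \GL_N$ (with $\fg$ acting by left-invariant vector fields), together with the fact that this subspace is closed as the kernel of integration, permits Lemma~\ref{Frechet-image-lem} to promote the statement to $\fg\cdot(\SS(G)\,\hat\otimes\, V) = \SS(G)_0\,\hat\otimes\, V$. The $\fg$-equivariance of $a_*\circ\alpha$ then sends every element of the $p_* = 0$ subspace into $\fg\cdot V$.

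The two analytic facts requiring care are Dixmier--Malliavin for the smooth Frechet $G$-representation $V$ (upgrading the smeared-action map from dense image to genuinely surjective) and the Poincar\'e-type identity $\SS(G)_0 = \fg\cdot\SS(G)$ for $G = \GL_N(F)$; the latter, handled separately on each connected component, reduces to the statement that the augmentation ideal in $\SS(G)$ is exhausted by derivatives along left-invariant vector fields. Both are standard for connected unimodular Lie groups. The holomorphic dependence on $\kappa\in D$ poses no additional obstacle: although the $\fg$-action on $V$ depends on $\kappa$ through the $G$-equivariant structure of $E_\kappa$, it does so tempered-holomorphically, so the tensor product identifications above commute with $\hat\otimes\,\HH(D)$.
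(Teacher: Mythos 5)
Your proof is correct and is, at its core, the paper's proof. Both arguments transport the problem through the $G$-equivariant isomorphism $\a$ to the $p^!$-side, identify $\SS^{hol}(G\times X\times D,p^!E_{\kappa})$ with $\SS(G,|\om_G|)\hat{\ot}V$, where $V:=\SS^{hol}(X\times D,E_{\kappa})$ and $G$ acts only by left translation on the first factor, and then combine the identity $\fg\cdot\SS(G,|\om_G|)=\ker\int_G$ with Lemma \ref{Frechet-image-lem} and the exactness of $\hat{\ot}$ applied to $0\to\SS(G,|\om_G|)_0\to\SS(G,|\om_G|)\to\C\to 0$ to identify $\ker p_*$ with $\fg\cdot(\SS(G,|\om_G|)\hat{\ot}V)$; the $\fg$-equivariance of $a_*\circ\a$ then yields $a_*\bigl(\SS^{hol}(G\times X\times D,a^!E_{\kappa})_{0,X,a}\bigr)\sub\fg\cdot V$ exactly as you argue. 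The one genuine divergence is in the opposite inclusion: you produce a preimage of $f$ by Dixmier--Malliavin, writing $f=\sum_i T_{\psi_i}f_i$ and differentiating, whereas the paper uses the surjectivity of $a_*$ (Lemma \ref{surj-int-lem}, resting on \cite[Thm.\ B.2.4]{AGS}) together with the purely algebraic fact that a surjection of $\fg$-modules carries $\fg\cdot(\mathrm{source})$ onto $\fg\cdot(\mathrm{target})$. The paper's route avoids having to verify that $V$ is a differentiable Fr\'echet representation of the Lie group $G(F)$ in the sense required by Dixmier--Malliavin (a routine but nontrivial extra check in this holomorphic-family setting) and reuses an ingredient already established; yours is more hands-on and makes the mechanism explicit on product-form elements. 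One caveat, shared with the paper, which your own phrasing makes visible: the identity $\fg\cdot\SS(G,|\om_G|)=\ker\int_G$ concerns a connected group, and the component-by-component reduction you propose literally yields only the sections integrating to zero on \emph{each} connected component of $\GL_N(F)$; for $F=\R$ this is a proper subspace of $\ker\int_G$, so this step (taken in the same form from \cite{AG-sm-tr} in the paper) needs care when $G(F)$ is disconnected.
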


\begin{proof}
The integration map 
$$a_*:\SS^{hol}(G\times X\times D,a^*E_{\kappa}\ot |\om_a|)\to \SS^{hol}(X\times D,E_{\kappa})$$
is surjective (see Lemma \ref{surj-int-lem}) and is compatible with $G$-actions, where $G$ acts on the $G$-coordinate in $G\times X$.
It follows that $\fg\cdot \SS^{hol}(X\times D,E_{\kappa})$ is the image of
$\fg\cdot \SS^{hol}(G\times X\times D,a^*E_{\kappa}\ot|\om_a|)$.
Furthermore, the isomorphism
$$p^*E_{\kappa}\ot |\om_p|\simeq p^*E_{\kappa}\ot |p_G^*\om_G|\simeq a^*E_{\kappa}\ot |\om_a|$$
is compatible with $G$-actions, where $p^*E_{\kappa}$ has a trivial $G$-action.
Now we have an isomorphism of $\fg$-representations,
$$\SS^{hol}(G\times X\times D,p^*E_{\kappa}\ot |p_G^*\om_G|)\simeq \SS(G,|\om_G|)\hat{\ot} \SS^{hol}(X,E_{\kappa}).$$
Recall that $\fg\cdot \SS(G,|\om_G|)$ coincides with the (closed) subspace $\SS(G,|\om_G|)_0$ of $f$ with $\int_G f=0$
(see \cite{AG-sm-tr}).
This gives an identification
$$\fg\cdot \SS^{hol}(G\times X\times D,p^*E_{\kappa}\ot |p_G^*\om_G|)\simeq \SS(G,|\om_G|)_0\hat{\ot} \SS^{hol}(X,E_{\kappa})$$
(see Lemma \ref{Frechet-image-lem}). 
It remains to observe that the completed tensored product of the exact sequence
$$0\to \SS(G,|\om_G|)_0\to \SS(G,|\om_G|)\rTo{\int_G} \C\to 0$$
with $\SS^{hol}(X,E_{\kappa})$ gives the exact sequence
$$0\to \SS^{hol}(G\times X\times D,p^*E_{\kappa}\ot |p_G^*\om_G|)_{0,X}\to \SS^{hol}(G\times X\times D,p^*E_{\kappa}\ot |p_G^*\om_G|)\rTo{p_*}
\SS^{hol}(X\times D,E_{\kappa})\to 0.$$
\end{proof}

%We denote by $p_2^*H\to Y$ the pull-back of this group scheme to $Y$ via the projection $p_2:Y\to X$.
%We have a natural action of $p_2^*H$ on $G\times X$, 
%$$p_2^*H\times_Y (G\times X)=H\times G\to G\times X: ((h,x),g)\mapsto (hg,x)$$
%(where $hgx=gx$),
%making $G\times X$ into a $p_2^*H$-torsor over $Y$.

Next, we will consider the case of a smooth group scheme over $X$.

\begin{prop}\label{H-equiv-prop}
Let $\pi:H\to X$ be a smooth group scheme over $X$, $V_{\kappa}$ a family of $H(F)$-equivariant vector bundles over $X(F)$ of the form $P\ot |L_1|^{\kappa}\ot |L_2|$, where
$\kappa\in D_0$.
Assume that for every $x\in X$ and $\kappa\in D\sub D_0$, one has $(V_{\kappa}|_x)_{\fh|_x}=0$. Then the composition
$$\SS^{hol}(H\times D,\pi^*V_{\kappa}\ot |\om_\pi|)_{0,X}\rTo{\a}\SS^{hol}(H\times D,\pi^*V_{\kappa}\ot |\om_\pi|)\rTo{\pi_*}\SS^{hol}(X\times D,V_{\kappa}),$$
induced by the $H$-action map $\a:\pi^*V_{\kappa}\to \pi^*V_{\kappa}$ followed by the integration along $\pi:H\to X$,
is surjective.
\end{prop}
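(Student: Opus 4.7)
The proposition is the relative analog over $X$ of Proposition \ref{coinv-descr-prop}, in which the smooth group scheme $\pi:H\to X$ replaces a fixed group acting on $X$, combined with the fiberwise coinvariants hypothesis that promotes the Lie-algebra-action identification into actual surjectivity. The plan splits into two steps: (a) identify the image $\pi_*\bigl(\alpha(\SS^{hol}(H\times D,\pi^*V_{\kappa}\otimes|\om_\pi|)_{0,X})\bigr)$ inside $\SS^{hol}(X\times D,V_{\kappa})$ with the image of the fiberwise action of $\fh:=\Lie(H/X)$; and (b) use the fiberwise vanishing hypothesis to conclude that this image fills $\SS^{hol}(X\times D,V_{\kappa})$.

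For step (a), I would mimic the argument of Proposition \ref{coinv-descr-prop} in a fibered manner. Using the exponential/logarithm map, \'etale-locally on $X$ one obtains a tempered isomorphism between an open neighborhood $U\sub H$ of the identity section and $X\times W$ for some open $W$ in a model Lie algebra $\fh_0$; tempered partitions of unity then reduce the problem to the case of a constant family $H=X\times H_0$. In that situation Proposition \ref{coinv-descr-prop}, applied to the trivial $H_0$-action on $X$, produces the identification
$$\fh_0\cdot\SS^{hol}(X\times D,V_{\kappa})=\pi_*\bigl(\alpha(\SS^{hol}(H\times D,\pi^*V_{\kappa}\otimes|\om_\pi|)_{0,X})\bigr),$$
with the completed tensor product manipulations of Lemma \ref{Frechet-image-lem} carrying over verbatim by nuclearity of the spaces involved. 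Reassembling these local identifications yields the global statement with $\fh$ acting fiberwise on $V_{\kappa}$.

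For step (b), the fiberwise action differentiates to a morphism of smooth vector bundles $\mu:\fh\ot_{\OO_X}V_{\kappa}\to V_{\kappa}$ depending holomorphically on $\kappa$. The hypothesis $(V_{\kappa}|_x)_{\fh|_x}=0$ at every $x\in X$ is precisely the statement that $\mu$ is fiberwise surjective, so $\mu$ is a surjection of smooth vector bundles and admits a tempered right inverse. Taking Schwartz sections with holomorphic dependence on $\kappa$ preserves this surjectivity, giving $\fh\cdot\SS^{hol}(X\times D,V_{\kappa})=\SS^{hol}(X\times D,V_{\kappa})$. Combined with step (a), this yields the desired surjectivity.

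The main obstacle is step (a): the fiber Lie algebras $\fh|_x$ vary with $x$, so transplanting the single-group argument of Proposition \ref{coinv-descr-prop} requires careful local trivializations and verification that the Lie-algebra-action identification glues consistently under tempered partitions of unity, all while preserving holomorphic dependence on $\kappa$ and the nuclear Frechet structure used in the completed tensor product computations. Step (b) is more routine, amounting to a Nakayama-type argument on smooth vector bundles together with the preservation of surjectivity upon passage to Schwartz sections.
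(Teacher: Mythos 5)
Your overall architecture matches the paper's: step (b) is exactly the paper's Step 3 (the fiberwise vanishing makes $\a_{\fh}:\fh\ot V_{\kappa}\to V_{\kappa}$ a surjection of bundles, which after local trivialization admits a section and hence induces a surjection $\SS(X,\fh)\ot\SS^{hol}(X\times D,V_{\kappa})\to\SS^{hol}(X\times D,V_{\kappa})$), and step (a) plays the role of the paper's Steps 1--2. But the execution you propose for step (a) has a genuine gap. A smooth group scheme $\pi:H\to X$ is \emph{not} \'etale-locally a constant family of groups: the exponential map only identifies a neighborhood of the identity section with a neighborhood of the zero section of the vector bundle $\fh$, and the group law (indeed the isomorphism type of the fibers $H_x$) still varies with $x$. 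So you cannot reduce to $H=X\times H_0$ and invoke Proposition \ref{coinv-descr-prop}, which is stated for a single fixed group acting on $X$. The relative nature of the problem is irreducible here, and your closing paragraph essentially concedes that this is the hard point without resolving it.

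The paper handles step (a) without any trivialization of the group scheme, and proves only the containment that is actually needed (image $\supseteq$ image of the $\SS(X,\fh)$-action), not the exact identification you assert. Concretely: (Step 1) the composition $\pi_*\circ\a$ on all of $\SS^{hol}(H\times D,\pi^*V_{\kappa}\ot|\om_\pi|)$ is a map of $\SS(X,\fh)$-modules, where the source carries the module structure coming from the \emph{trivial} $H$-action on $V_{\kappa}$ (i.e.\ differentiation along the fibers of $\pi$) and the target carries the one coming from the actual $H$-equivariant structure; this is the relative analog of \cite[Cor.\ 9.17]{AG-sm-tr}. (Step 2) the image of $\SS(X,\fh)\ot\SS^{hol}(H\times D,\pi^*V_{\kappa}\ot|\om_\pi|)$ under the trivial-coefficient action lies in the kernel $(\cdot)_{0,X}$ of $\pi_*$; after a partition of unity this reduces to \cite[Lem.\ B.1.14]{AG-sm-tr} tensored with $\SS^{hol}(D)$. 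Combining these with the surjectivity of $\pi_*\circ\a$ on the full space (Lemma \ref{surj-int-lem}) and with Step 3 gives the result formally: write $v=\sum\xi_i v_i$, lift each $v_i$, and act by $\xi_i$ on the lifts. Note also that "$\fh\cdot$" must mean the action of (Schwartz) sections of the bundle $\fh$, not of a fixed Lie algebra; this is another place where the constant-family picture misleads. If you replace your step (a) by these two relative statements, the argument goes through.
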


\begin{proof}
{\bf Step 1}. The surjective composed map 
$$\SS^{hol}(H\times D,\pi^*V_{\kappa}\ot |\om_\pi|)\rTo{\a}\SS^{hol}(H\times D,\pi^*V_{\kappa}\ot |\om_\pi|)\rTo{\pi_*}\SS^{hol}(X\times D,V_{\kappa}),$$
is a map of $\SS(X,\fh)$-representations, where the action on the source considers $V_{\kappa}$ with the trivial $H$-action (cf. the proof of \cite[Cor.\ 9.17]{AG-sm-tr}).

\noindent
{\bf Step 2}. The image of the map $\SS(X,\fh)\ot \SS^{hol}(H\times D,\pi^*V_{\kappa}\ot |\om_\pi|)\to \SS^{hol}(H\times D,\pi^*V_{\kappa}\ot |\om_\pi|)$
associated with the trivial $H$-action on $V_{\kappa}$ is contained in $\SS^{hol}(H\times D,\pi^*V_{\kappa}\ot |\om_\pi|)_{0,X}$.

Indeed, by partition of unity, we can assume that $V_{\kappa}$ is trivial. We have the following identifications:
$$\SS^{hol}(H\times D,|\om_\pi|)\simeq \SS(H,|\om_\pi|)\hat{\ot} \SS^{hol}(D), \ \ \SS^{hol}(X\times D)\simeq \SS(X)\hat{\ot} \SS^{hol}(D),$$
$$\SS^{hol}(H\times D,|\om_\pi|)_{0,X}\simeq \SS(H,|\om_\pi|)_{0,X}\hat{\ot} \SS^{hol}(D),$$
where the latter identification is obtained by looking at the completed tensor product with $\SS^{hol}(D)$ of the sequence
$$0\to \SS(H,|\om_\pi|)_{0,X}\to \SS(H,|\om_\pi|)\rTo{\pi_*}\SS(X)\to 0.$$
By \cite[Lem.\ B.1.14]{AG-sm-tr}, we have an inclusion
$$\im(\SS(X,\fh)\ot \SS(H,|\om_\pi|)\to \SS(H,|\om_\pi|))\sub \SS(H,|\om_\pi|)_{0,X}.$$
%Since $\SS(H,|\om_\pi|)_{0,X}$ is a closed subspace of $\SS(H,|\om_\pi|)$, this implies an inclusion
%$$\im(\SS(X,\fh)\hat{\ot} \SS(H,|\om_\pi|)\to \SS(H,|\om_\pi|))\sub \SS(H,|\om_\pi|)_{0,X}.$$
This implies that the composition
$$\SS(X,\fh)\hat{\ot} \SS(H,|\om_\pi|)\to \SS(H,|\om_\pi|)\rTo{\pi_*}\SS(X)$$
is zero. Tensoring with $\SS^{hol}(D)$, we deduce that the composition
$$\SS(X,\fh)\hat{\ot} \SS(H,|\om_\pi|)\hat{\ot} \SS^{hol}(D)\to \SS(H,|\om_\pi|)\hat{\ot} \SS^{hol}(D) \rTo{\pi_*\ot \id}\SS(X)\hat{\ot} \SS^{hol}(D),$$
is zero, which gives the assertion in view of the above identifications.

\noindent
{\bf Step 3}. The map $\SS(X,\fh)\ot \SS^{hol}(X\times D,V_{\kappa})\to \SS^{hol}(X\times D,V_{\kappa})$ (induced by the $H$-action on $V_{\kappa}$) is surjective.
The action of $\fh$ on $V_{\kappa}$ gives a morphism of vector bundles
$$\a_{\fh}:\fh\ot V_{\kappa}\to V_{\kappa}$$
over $X\times D$, and our assumption implies that it is surjective. By the partition of unity, we can assume that $X$ is affine, and can trivialize $P$, $L_1$ and $L_2$.
and $\fh$ as a vector bundle. Let us write $\fh=\fh_0\ot \OO$, $V_{\kappa}=V_0\ot \OO$, where $\fh_0$ and $V_0$ are finite-dimensional vector spaces.
Then $\a_{\fh}$ can be viewed as a surjective algebraic morphism 
$$\fh_0\ot V_0\ot \OO_{X\times D}\to V_0\ot \OO_{X\times D}$$ 
over $X\times D$ (the dependence on $\kappa$ will be linear). 

We have identifications
$$\SS(X,\fh)\simeq \fh_0\ot \SS(X), \ \ \SS^{hol}(X\times D,V_{\kappa})\simeq V_0\ot \SS^{hol}(X\times D),$$
so the map we are interested in is obtained as the composition
$$\fh_0\ot \SS(X)\ot V_0\ot \SS^{hol}(X\times D)\to \fh_0\ot V_0\ot \SS^{hol}(X\times D)\rTo{\a_{\fh}} V_0\ot \SS^{hol}(X\times D),$$
where the first arrow is induced by the natural surjective map $\SS(X)\ot \SS^{hol}(X\times D)\to \SS^{hol}(X\times D)$.
Thus, it suffices to prove surjectivity of the map
\begin{equation}\label{th0-V0-surj-map}
\fh_0\ot V_0\ot \SS^{hol}(X\times D)\rTo{\a_{\fh}} V_0\ot \SS^{hol}(X\times D).
\end{equation}

To this end we can choose a section 
$$s:V_0\ot \OO_{X\times D}\to \fh_0\ot V_0\ot \OO_{X\times D}$$
of $\a_{fh}$. Then $s$ gives rise to a map
$$V_0\ot \SS^{hol}(X\times D)\to \fh_0\ot V_0\ot \SS^{hol}(X\times D),$$
which is a section of \eqref{th0-V0-surj-map}, proving its surjectivity.

\noindent
{\bf Step 4}. Now the assertion follows formally from Steps 1, 2 and 3.
\end{proof}

\subsection{Representation of torsors}

We use the terminology of \cite[Sec.\ B.1.4]{AG-sm-tr} concerning (families of) torsors.

We consider a smooth morphism of $F$-varieties, $\pi:T\to Y$ with the torsor structure over $Y$. This means that we are given a map $m:T\times_Y T\times_Y T\to T$
which is locally of the form $m(x,y,z)=z(z^{-1}x)(z^{-1}y)$ for some group structure. For every $t\in T$ we denote by $H_t$ the fiber $\pi^{-1}(\pi(t))$ with the unique group structure
such that $t$ is the neutral element. In the archimedean case we denote by $\fh_t$ the Lie algebra of $H_t$.
An (algebraic) representation of $T$ is a pair of (algebraic) vector bundles $V_1$, $V_2$ on $Y$, together with a morphism 
$\a:\pi^*V_1\to \pi^*V_2$ such that for every $t_0\in T$, the map
$t\mapsto \a_t\a_{t_0}^{-1}:V_2|_{\pi(t_0)}\to V_2|_{\pi(t_0)}$ gives an action of $H_{t_0}$ on $V_2|_{\pi(t_0)}$. 
There is a natural operation of tensor product of representations of $T$.

We have the following Corollary from Proposition \ref{H-equiv-prop} (an analog of \cite[Cor.\ B.1.27]{AG-sm-tr}).

\begin{cor}\label{torsor-cor}
Let $\pi:T\to Y$ be an algebraic torsor over $F$, $(V_{1,\kappa},V_{2,\kappa},\a)$ a family of representations of $T(F)$, obtained as the tensor product of
an algebraic representation $(P_1,P_2)$, with $1$-dimensional representations of the form $(|L_1|^{\kappa},|L_2|^{\kappa})$ and 
$(|M_1|,|M_2|)$ (where $(L_1,L_2)$ and $(M_1,M_2)$ are algebraic representations of $T$). 
Assume that for every $t\in T$ and $\kappa\in D$, one has $(V_{2,\kappa}|_{p(t)})_{\fh_t}=0$. Then the composition
\begin{equation}\label{main-surj-eq}
\SS^{hol}(T\times D,\pi^*V_{1,\kappa}\ot |\om_\pi|)_{0,Y}\rTo{\a}\SS^{hol}(T\times D,\pi^*V_{2,\kappa}\ot |\om_\pi|)\rTo{\pi_*}\SS^{hol}(Y\times D,V_{\kappa}),
\end{equation}
is surjective.
\end{cor}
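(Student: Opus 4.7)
The plan is to deduce Corollary \ref{torsor-cor} from Proposition \ref{H-equiv-prop} by locally trivializing the torsor. Let $H := \Aut_Y(T)$ denote the algebraic automorphism group scheme of the torsor $\pi: T \to Y$; it is a smooth group scheme over $Y$ making $T$ into a principal $H$-torsor, and its Lie algebra fiber $\fh|_y$ canonically matches $\fh_t$ for any $t \in \pi^{-1}(y)$.

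First, I would use the fact that the smooth torsor $T \to Y$ is Nash-locally trivial on $F$-points to choose a finite open cover $Y(F) = \bigcup_i U_i$ together with Nash sections $\sigma_i: U_i \to T$. Applying a tempered partition of unity subordinate to $\{U_i\}$ (whose existence is guaranteed by \cite[Thm.\ 5.2.1]{AG}), the surjectivity of \eqref{main-surj-eq} reduces to the case where $T$ admits a global Nash (hence tempered) section $\sigma: Y \to T$.

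Second, such a section produces a tempered isomorphism $\tau: T \simeq H$ over $Y$ sending $\sigma(y)$ to the identity in $H|_y$. Under $\tau$, the torsor-representation data $(V_{1,\kappa}, V_{2,\kappa}, \a)$ becomes a standard $H$-equivariant structure on $V_{2,\kappa}$: the restriction $\a_\sigma := \a|_{\sigma(Y)} : V_{1,\kappa} \to V_{2,\kappa}$ is an isomorphism identifying source and target bundles, and the prescription $h \in H|_y \mapsto \a_{\tau^{-1}(h)} \circ \a_{\sigma(y)}^{-1}$ defines a genuine action of $H|_y$ on $V_{2,\kappa}|_y$, by the defining axiom of a torsor representation. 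Both the integration $\pi_*$ along $\pi$ and the action composition carry over along $\tau$, using the tempered compatibility built into Definition \ref{Sch-hol-vec-bun-def}.

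Third, the hypothesis $(V_{2,\kappa}|_{\pi(t)})_{\fh_t} = 0$ translates to $(V_{2,\kappa}|_y)_{\fh|_y} = 0$ for the group scheme $H$, which is exactly the hypothesis of Proposition \ref{H-equiv-prop}. That proposition then yields surjectivity of the analogous composition for $H$ acting on $V_{2,\kappa}$, and pre-composing with the isomorphism $\a_\sigma$ converts this into the desired surjectivity of \eqref{main-surj-eq}. The main obstacle is verifying that the Nash trivialization $\tau$ preserves the Schwartz structure with holomorphic dependence on $\kappa \in D$; this should follow because $\tau$ and $\tau^{-1}$ are Nash, hence tempered, so the Schwartz spaces are preserved up to tempered change of trivialization. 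A secondary subtlety is that $\pi$ must be surjective on $F$-points for $\pi_*$ to surject onto $\SS^{hol}(Y \times D, V_{2,\kappa})$, which is already implicit in the setup via Lemma \ref{surj-int-lem}.
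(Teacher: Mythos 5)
Your proposal is correct and follows the paper's own argument: the paper likewise reduces to the case of a global section via local sections and a partition of unity, notes that a section turns $T$ into a family of smooth groups, and then invokes Proposition \ref{H-equiv-prop}. Your extra care about the tempered nature of the trivialization and the identification $V_{1,\kappa}\simeq V_{2,\kappa}$ via $\a_\sigma$ just makes explicit what the paper leaves implicit.
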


\begin{proof} If there is a section $\si:Y\to T$ then $T$ becomes a family of smooth groups and the assertion
follows from Proposition \ref{H-equiv-prop}. The general case follows from this by choosing local sections and a partition of unity.
\end{proof}

\begin{proof}[Proof of Theorem \ref{coinv-fact-thm}]
Let $Y\sub X\times X$ denote the image of the map $(p,a):G\times X\to X\times X: (g,x)\mapsto (x,gx)$. 
In other words, $Y$ is the variety of pairs $(x_1,x_2)$ such that $x_1$ and $x_2$ are in the same $G$-orbit.
By assumption, $Y$ is smooth and the map 
$$b:G\times X\to Y:(g,x)\mapsto (x,gx)$$ 
is smooth. Hence, the projections $p_1,p_2:Y\to X$ are also smooth.
%Note that we have an embedding $\De:X\to Y:x\mapsto (x,x)$.
%Let us set
%$$H:=b^{-1}(\De(X))\sub G\times X.$$  
%Then $H$ has a structure of a smooth group scheme over $X$, so that $H|_x$ is the stabilizer subgroup of $x$ in $G$.
%More, precisely, we view $H$ as a subgroup of the constant group $G\times X$ over $X$.
%Furthermore, $Y$ can be identified with the corresponding family of homogeneous spaces over $X$, $(G\times X)/H$,
%where $G$ acts on $Y$ by acting on the second component.

Via the map $b$, we can view $G\times X$ as a torsor over $Y$. Note that $b^{-1}(x_1,x_2)$ is identified with $\{g\in G \ |\ gx_1=x_2\}$.
For each $(g,x)\in G\times X$, the corresponding group $H_{g,x}$ is canonically identified with $H_x\simeq H_{gx}$.

%The map $p_2:Y\to X$ is $G$-equivariant, and the points $y$ and $p_2(y)$ have the same stabilizer subgroups.
%Let us consider the $G$-equivariant bundle on $Y$,
The pair of bundles on $Y$,
$$V_{i,\kappa}=p_i^*E_{\kappa}\ot |\om_{p_i}|, \ i=1,2,$$
has a natural structure of a representation over the torsor $G\times X$. Namely, 
for $(x_1,x_2)\in Y$, and an element $g\in G$ such that $gx_1=x_2$, we use the map
$$V_{1,\kappa}|_{(x_1,x_2)}\simeq E_{\kappa}|_{x_1}\ot |\om_{\Om}||_{x_2}\rTo{\a_g\ot \a_{g^{-1}}} E_{\kappa}|_{x_2}\ot |\om_{\Om}||_{x_1},$$
induced by the action of $g$ on $E_{\kappa}$ and by the action of $g^{-1}$ on $\om_{\Om}$, where $\Om$ is the $G$-orbit containing $x_1$ and $x_2$.
The corresponding representation of $H_x$ on $V_{2,\kappa}|_x$ can be identified with $E_{\kappa}\ot \De_{H_x}\ot \De_G^{-1}|_{H_x}$. 
Thus our assumption implies that for every $x\in X$, one has $(V_{2,\kappa}|_x)_{\fh_x}=0$.
%$y\in Y$ and every $\kappa\in D_0$, one has $(E'_{\kappa}|_y)_{\fg_y}=0$.
%Note that we have continuous surjective integration maps
%$$p_{2*}:\SS^{hol}(Y\times D,E'_{\kappa})\to \SS^{hol}(X\times D,E_{\kappa})$$
%compatible with $\fg$-action (see Lemma \ref{surj-int-lem}).

Applying Corollary \ref{torsor-cor} we deduce surjectivity of the composition 
\eqref{main-surj-eq}.
On the other hand, by Proposition \ref{coinv-descr-prop}, we need to prove surjectivity of the composition
$$\SS^{hol}(G\times X\times D,p^!E_{\kappa})_{0,X}\hra \SS^{hol}(G\times X\times D,p^!E_{\kappa})\rTo{\a} \SS^{hol}(G\times X\times D,a^!E_{\kappa})\rTo{a_*}\SS^{hol}(X\times D,E_{\kappa}).$$
Since $a=p_2\circ b$, and the map $p_2:Y\to X$ is smooth and surjective on $F$-points, by Lemma \ref{surj-int-lem}, it is enough to prove surjectivity of the similar composition
with $a_*$ replaced by the map
$$b_*:\SS^{hol}(G\times X\times D,a^!E_{\kappa})\to \SS^{hol}(Y\times D,p_2^!E_\kappa).$$
Finally, we observe that we have an inclusion
$$\SS^{hol}(G\times X\times D,b^*V_{1,\kappa}\ot |\om_b|)_{0,Y}\sub \SS^{hol}(G\times X\times D,p^!E_{\kappa})_{0,X}$$
(the first space is the kernel of $b_*$, while the second is the kernel of $p_*$), and 
a commutative diagram
\begin{diagram}
\SS^{hol}(G\times X\times D,p^!E_{\kappa})&\rTo{\a}&\SS^{hol}(G\times X\times D,a^!E_{\kappa})\\
\dTo{\sim}&&\dTo{\sim}\\
\SS^{hol}(G\times X\times D,b^!V_{1,\kappa})&\rTo{\a}&\SS^{hol}(G\times X\times D,b^!V_{2,\kappa}).
\end{diagram}
Hence, the assertion follows from surjectivity of \eqref{main-surj-eq}.
\end{proof}
%We claim that for every $\kappa$, one has 
%$$\varinjlim_{\kappa\in D}\SS^{hol}(Y\times D,E'_{\kappa})_{\fg}=0.$$
%For the proof consider $\fg$-invariant distributions???

\subsection{Corollary for admissible stacks}

%Let $F$ be a non-archimedean local field.

%We use the notions of an admissible stack over $F$ and of the Schwartz space $\SS(\XX,|\LL|^{\kappa})$
%defined in \cite{GK} (see also ???), where  

Let $\XX$ be an admissible stack of finite type over $F$,  $\LL$ a line bundle on $\XX$.

Let $\UU\sub \XX$ an open substack. For every $\kappa\in\C$, there is a natural map 
$$j_{\UU\to \XX,\LL,\kappa}:\SS(\UU,|\LL|^{\kappa})\to \SS(\XX,|\LL|^{\kappa})$$
and for an open region $D\sub\C$, a map 
$$j^{hol}_{\UU\to \XX,\LL,D}:\SS^{hol}(\UU\times D,|\LL|^{\kappa})\to \SS^{hol}(\XX\times D,|\LL|^{\kappa})$$

As before, for every point $x\in \XX$, we consider
%the action of the group $\Aut(x)$ on the line bundle $\LL$ gives a 
the character
$$\chi_{\LL,x}:\Aut(x)\to \G_m.$$
and the algebraic modular character $\De^{alg}_{\Aut(x)}$ of $\Aut(x)(F)$.
% the standard action of $\Aut(x)$ on the line ${\bigwedge}^{top} Lie(\Aut(x))$ gives a character
%$$\chi_0:\Aut(x)\to \G_m.$$

\begin{lemma}\label{surjective-lem}
Let $D$ be either an open region in $\C$ or $D=\{\kappa_0\}$.
Assume that for every point $x\in \XX\setminus \UU$ and $\kappa\in D$, 
%the following condition holds:
%\begin{itemize}
%\item
%if $F$ is non-archimedean then 
%$$|\chi_{\LL,x}|^{\kappa}\neq \De_{\Aut(x)};$$
%\item
%if $F$ is archimedean then 
there exists an algebraic homomorphism $\la^\vee:\G_m\to \Aut(x)$, such that for any character $a\mapsto a^m$ of $\G_m$ appearing in the action on $S^\bullet H^0T^*_{\XX}|_x$ one has
$$\kappa\cdot \lan \chi_{\LL,x},\la^\vee\ran+m\neq -\lan\De^{alg}_{\Aut(x)},\la^\vee\ran.$$
%$$|\eta|\ot |\chi_{\LL,x}|^{\kappa}_{F^*_0}\neq \De_{\Aut(x)}|_{F^*_0},$$
%where $F^*_0$ is the connected component of $1\in F^*$.
%$$(S^m(H^0T_{\XX})\cdot |\chi_{\LL}|^{\kappa}\cdot |\chi_0|)_{\fg_r}=0,$$
%for some reductive subalgebra $\fg_r$ in the lie algebra of $\Aut(x)$.
%\end{itemize}
Then 
%in the non-archimedean case
%the map $j^{hol}_{\UU\to \XX,\LL,D}$ is surjective, whereas in the archimedean case 
for any open $D'\sub D$ such that $\ov{D'}$ is compact and is contained in $D$,
the image of the restriction map
$$\SS^{hol}(\XX\times D,|\LL|^{\kappa})\to \SS^{hol}(\XX\times D',|\LL|^{\kappa})$$
is contained in the image of $j^{hol}_{\UU\to \XX,\LL,D'}$.
In the case $D=\{\kappa_0\}$, the conclusion is that $j_{\UU\to\XX,\LL,\kappa_0}$ is surjective.
%induce a surjective morphism of associated sheaves on $D_0$.
\end{lemma}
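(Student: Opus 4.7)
The plan is to deduce this lemma from Theorem \ref{coinv-thm}, along exactly the same lines as the deduction of Lemma \ref{surjective-lem-nonarch} from Theorem \ref{l-space-coinv-thm} in the non-archimedean case. I would write $\XX=[X/G]$ with $G=\GL_N$ and $X$ a smooth $F$-variety, let $U\sub X$ be the $G$-invariant open preimage of $\UU$, set $Z:=X\setminus U$, and let $L$ be the pullback of $\LL$ to $X$. Define $E_{\kappa}:=|L|^{\kappa}\ot|{\bigwedge}^{top}\fg|^{-1}$. By Sec.\ \ref{schw-stacks-sec}, the spaces $\SS^{hol}(\XX\times D,|\LL|^{\kappa})$ and $\SS^{hol}(\UU\times D,|\LL|^{\kappa})$ are respectively the $G(F)$-coinvariants of $\SS^{hol}(X\times D,E_{\kappa})$ and $\SS^{hol}(U\times D,E_{\kappa})$. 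Invoking Lemma \ref{Lie-group-coinv-lem}, it suffices to prove the containment/surjectivity after replacing $G(F)$-coinvariants by $\fg$-coinvariants, i.e., to show that the map
$$\bigl[\SS^{hol}(X\times D,E_{\kappa})/\SS^{hol}(U\times D,E_{\kappa})\bigr]_{\fg}\to \bigl[\SS^{hol}(X\times D',E_{\kappa})/\SS^{hol}(U\times D',E_{\kappa})\bigr]_{\fg}$$
is zero (and its analogue $[\SS(X,E_{\kappa_0})/\SS(U,E_{\kappa_0})]_{\fg}=0$ when $D=\{\kappa_0\}$).

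I would then apply Theorem \ref{coinv-thm} with $V$ trivial, $L_1=L$, $L_2={\bigwedge}^{top}\fg^\vee$, and the above $Z$. For each $z\in Z$ with image $x\in\XX$, so $\Aut(x)=G_z$, I would take the reductive subalgebra $\fg_{z,r}\sub\fg_z$ to be the image of the differential $d\la^\vee\colon \Lie(\G_m)\to \fg_z$ of the cocharacter furnished by the hypothesis; this is an at most one-dimensional abelian (hence reductive) subalgebra, so the reductivity requirement of Theorem \ref{coinv-thm} is satisfied. Under the canonical identification $H^0 T^*_{\XX}|_x\simeq N^\vee_{Gz}|_z$, the $\la^\vee$-weights occurring in $S^\bullet N^\vee_{Gz}|_z$ are precisely the integers $m$ appearing in the lemma's hypothesis.

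It remains to compute the weight with which the canonical generator of $\fg_{z,r}$ acts on the line
$$|\chi_{L,z}|^{\kappa}\ot|\chi_{L_2,z}|\ot S^m(N^\vee_{Gz})|_z\ot \De_{G_z}/\De_G|_{G_z}$$
appearing in the hypothesis of Theorem \ref{coinv-thm}. Since $L_2={\bigwedge}^{top}\fg^\vee$, we have $\chi_{L_2,z}=(\det\operatorname{Ad}_\fg)^{-1}|_{G_z}=\De^{alg}_G|_{G_z}$, so when we pair with $\la^\vee$ the contributions of $|\chi_{L_2,z}|$ and of $(\De^{alg}_G)^{-1}|_{G_z}$ cancel out, and the total weight collapses to
$$\kappa\cdot \lan\chi_{\LL,x},\la^\vee\ran+m+\lan\De^{alg}_{\Aut(x)},\la^\vee\ran,$$
which is nonzero for every $\kappa\in D$ by assumption. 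Hence the $\fg_{z,r}$-coinvariants vanish, the hypothesis of Theorem \ref{coinv-thm} is verified, and the desired statement follows. The main obstacle I anticipate is this bookkeeping of modular characters, especially verifying the simplification $\chi_{L_2,z}=\De^{alg}_G|_{G_z}$ that cleanly reduces Theorem \ref{coinv-thm}'s hypothesis to the compact form of the hypothesis stated in this lemma.
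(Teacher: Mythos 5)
Your proposal is correct and follows essentially the same route as the paper: reduce to $\XX=[X/G]$, pass from $G(F)$-coinvariants to $\fg$-coinvariants via Lemma \ref{Lie-group-coinv-lem}, and apply Theorem \ref{coinv-thm}. The explicit verification you give of Theorem \ref{coinv-thm}'s hypothesis — taking $\fg_{z,r}$ to be the image of $d\la^\vee$ and checking the cancellation $|\chi_{L_2,z}|\ot\De_G|_{G_z}^{-1}=1$ — is exactly the bookkeeping the paper leaves implicit, and it is done correctly.
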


\begin{proof}
We can assume that $\XX=[X/G]$, where $G=\GL_N$, and $\UU=[U/G]$, where $U\sub X$ is open.
Let $L$ denote the pull-back of $\LL$ to $X$.
We have $\Aut_{\XX}(x)=G_x\sub G$, the stabilizer subgroup of $x\in X$. 
%Since $\De_G=1$, in the non-archimedean case the assertion now follows from Theorem \ref{coinv-thm}.
%In the archimedean case 
Theorem \ref{coinv-thm} implies that the top horizontal arrow in the diagram 
\begin{diagram}
\SS^{hol}(X\times D,|L|^{\kappa})_{\fg}/\im \SS^{hol}(U\times D,|L|^{\kappa})_{\fg}&\rTo{}&
\SS^{hol}(X\times D',|L|^{\kappa})_{\fg}/\im\SS^{hol}(U\times D',|L|^{\kappa})_{\fg}\\
\dTo{}&&\dTo{}\\
\SS^{hol}(\XX\times D,|\LL|^{\kappa})/\im \SS^{hol}(\UU\times D,|\LL|^{\kappa})&\rTo{}&
\SS^{hol}(\XX\times D',|\LL|^{\kappa})/\im \SS^{hol}(\UU\times D',|\LL|^{\kappa})
\end{diagram}
is zero.
Now we observe that the vertical arrows in this diagram are surjective by Lemma \ref{Lie-group-coinv-lem}. Hence, the bottom horizontal arrow is also zero.
%Then to prove the required surjectivity it is enough to check that any $G(F)$-invariant $|L|^{\kappa}\ot |\bigwedge^{top}(\fg)|^{-1}$-twisted distribution on $X(F)$,
%which restricts to zero on $U(F)$, is itself zero. By \cite[Thm.\ 6.9]{BZ}, it is enough to prove that absence of nonzero $G$-invariant
%$|L|^{\kappa}\ot |\bigwedge^{top}(\fg)|^{-1}$-twisted distributions on any $G(F)$-orbit contained in the complement of $U$. But this follows from our assumption.
\end{proof} 

%\subsection{Surjectivity with analytic dependence on $\kappa$}

%We need a version of Lemma \ref{surjective-lem} with analytic dependence of a lifting on $\kappa$
%(under slightly stronger assumptions).

%Archimedean analog of Lemma \ref{surjective-lem}.
%
%\begin{lemma}\label{surjective-arch-lem}
%Assume that for every point $x\in \XX\setminus \UU$, there exists a homomorphism $\iota:\G_m\to \Aut(x)$, such that
%$\G_m$ acts on $H^0T_x\XX$ with nonnegative weights, and 
%$$\chi_0(\iota(\la))=\la^{d_0(x)}, \ \ \chi_{\LL}(\iota(\la))=\la^{d(x)},$$
%where $d(x)>0$. Assume that $\Re(\kappa)>d_0(x)/d(x)$ for every $x$. Then the map $j_{\UU\to \XX, \LL,\kappa}$ is surjective.
%\end{lemma}

\section{Integration over orbits}\label{orbit-int-sec}

\subsection{Setup and two extension results}\label{integration-ext-sec}

In this section $F$ is either archimedean or non-archimedean.

Let $X$ be a smooth $G$-scheme over $F$  (where $G=\GL_N$), $L$ a $G$-equivariant line bundle
on $X$ representing a line bundle $\LL$ on the stack $[X/G]$, and let $U_0\sub X$ be an open $G$-invariant subscheme such that $G$ acts freely on $U_0$ and
the quotient $U_0/G$ exists. Let us choose a trivialization 
$\vol_{\fg}\in |{\bigwedge}^{top}(\fg)^{-1}|$.

First, we observe that for each $\kappa\in\C$,
%we claim that for $\kappa$ with large enough real part, 
there is a well defined map
$$\pi_{U_0,\kappa}:\SS(U_0(F),|L|^{\kappa})_{G(F)}\to \SS((U_0/G)(F),|L_0|^{\kappa}),$$
where $L_0$ is the descent of $L$ to $U_0/G$,
given by absolutely convergent integrals
\begin{equation}\label{orbit-integral-eq}
\pi_{U_0,\kappa}(\varphi)(x)=\int_{\Om_x(F)} \varphi\cdot \vol_{\fg},
\end{equation}
where $\Om_x\sub X$ is the $G$-orbit of $x\in U_0$ (which is closed in $U_0$).

We are interested in finding conditions under which the above map extends to a map
\begin{equation}\label{lc-kappa-map}
\pi_{X,U_0,\kappa}:\SS(X(F),|L|^{\kappa})_{G(F)}\to C^\infty((U_0/G)(F),|L_0|^{\kappa}),
\end{equation}
given by the same formula \eqref{orbit-integral-eq}
(where in the nonarchimedean case $C^\infty$ refers to locally constant functions).
%depending holomorphically on $\kappa$. 
%Clearly, 
%A sufficient condition is that the relevant integrals are absolutely convergent.???
% which is still given by convergent integrals \eqref{orbit-integral-eq}.
For technical reasons we will define $\pi_{U_0,\kappa}(\varphi)$ as distributions on $(U_0/G)(F)$, so actually
we will be able to control the integrals \eqref{orbit-integral-eq} only away from a set of orbits of measure zero.

We will consider the space of distributions $\DD((U_0/G)(F),|L_0|^{\kappa})$ acting on sections of $|L_0|^{-\kappa}\ot |\om|$,
so that we have an inclusion 
$$C^\infty((U_0/G)(F),|L_0|^{\kappa})\sub \DD((U_0/G)(F),|L_0|^{\kappa}).$$

\begin{definition}
We say that the pair $(U_0,X)$ is {\it $|L|^{\kappa}$-nice} if for every $\varphi\in \SS(X(F),|L|^{\kappa})$, and every 
$\psi\in \SS((U_0/G)(F),|L_0|^{-\kappa}\ot |\om|)$, the integral 
\begin{equation}\label{distr-integral-eq}
\pi_{X,U_0,\kappa}(\varphi)(\psi):=\int_{U_0(F)} \psi\cdot \varphi|_{U_0(F)}\cdot \vol_{\fg}
\end{equation} 
%the integrals over orbits \eqref{orbit-integral-eq} are well defined as distributions on $(U_0/G)(F)$, and
is absolutely convergent, and the corresponding distribution
$\pi_{X,U_0,\kappa}(\varphi)\in \DD((U_0/G)(F),|L_0|^{\kappa})$ belongs to the subspace $C^\infty((U_0/G)(F),|L_0|^{\kappa})$.
\end{definition}

It is easy to see using Fubini's theorem that in the situation of the above definition the integrals \eqref{orbit-integral-eq} are absolutely convergent away from
a set of orbits of measure zero. It is plausible that if $(U_0,X)$ is $|L|^{\kappa}$-nice then in fact the values of the obtained $C^\infty$-sections $\pi_{X,U_0,\kappa}(\varphi)$ are given
by absolutely convergent integrals \eqref{orbit-integral-eq} but we do not know how to prove this (and this is not essential for us).

From now on we specialize to the case $\LL=\om_{\XX}$, so $L=\om_X\ot {\bigwedge}^{top}(\fg)$. Note that since the action of $G$ on ${\bigwedge}^{top}(\fg)$ is trivial,
we can (and will) identify $L$ with $\om_X$ as a $G$-equivariant bundle.

Assume we have some  open $G$-invariant subschemes $U_0\sub U\sub X$ such that $G$ acts freely on $U_0$ and the quotient $U_0/G$ exists. 
%From the maps \eqref{lc-kappa-map} for $X$ and $U$ (and for large $\kappa$), we obtain maps
%$$\pi^{vs}_{X,U_0,\kappa}:\SS(X(F),|L|^{\kappa}\ot {\bigwedge}^{top}(\fg)^{\pm 1})_{G(F)}\to \C_{lc}(U_0(F)),$$
%$$\pi^{vs}_{U,U_0,\kappa}:\SS(U(F),|L|^{\kappa}\ot {\bigwedge}^{top}(\fg)^{\pm 1})_{G(F)}\to \C_{lc}(U_0(F)),$$
%depending holomorphically on $\kappa$.

%Recall that by Lemma \ref{surjective-lem},
%the natural map
%$$j_{\kappa}:\SS(U(F),|\om_U|^{\kappa})_{G(F)}\to 
%\SS(X(F),|\om_X|^{\kappa})_{G(F)}.$$
%is surjective provided every $x\in (X\setminus U)(F)$
%satisfies conditions of Lemma \ref{surjective-lem} for $\XX=[X/G]$.

As before, for $x\in X(F)$, $G_x=\Aut_{[X/G]}(x)$ is the stabilizer of $x$, and $\chi_{\om,x}:G_x\to\G_m$ is the character given by the action on $\om_X|_x$.

\begin{lemma}\label{extension-lem1}
Fix some real numbers $a<1$, $b>1$.
Assume that for any real number $k$ in the interval $(a,b)\sub \R$, the pair $(U_0,U)$ is $|\om|^k$-nice.
Assume in addition that 
\begin{itemize}
\item either $F$ is non-archimedean, and
for any $x\in (X\setminus U)(F)$ there exists a cocharacter $\la^\vee:\G_m\to G_x$ such that
$$k\cdot\lan \chi_{\om,x},\la^\vee\ran\neq -\lan \De^{alg}_{G_x},\la^\vee\ran$$
for any $k\in (a,b)$;
\item
or $F$ is archimedean, and
for any $x\in (X\setminus U)(F)$ there exists a cocharacter $\la^\vee:\G_m\to G_x$, such that for any character $a\mapsto a^m$ of $\G_m$ appearing in $S^\bullet H^0T^*_{[X/G]}|_x$,
$$k\cdot\lan \chi_{\om,x},\la^\vee\ran+m \neq -\lan \De^{alg}_{G_x},\la^\vee\ran$$
for any $k\in (a,b)$.
%and any $\kappa$ in the strip  $a<\operatorname{Re}(\kappa)<b$,
% the condition from Lemma \ref{surjective-lem} holds.
\end{itemize}
Then the pair $(U_0,X)$ is $|\om|^{\kappa}$-nice for any $\kappa$ in the strip $a<\operatorname{Re}(\kappa)<b$.
%there exists a map 
%$$\pi_{X,U_0,\kappa}:\SS(X(F),|\om_X\ot {\bigwedge}^{top}(\fg)|^{\kappa}\ot |{\bigwedge}^{top}(\fg)|^{-1})_{G(F)}\to \C_{lc}(U_0(F),|\om|^{\kappa}),$$
%holomorphic on $a<\operatorname{Re}(\kappa)<b$, which is given in this range by the 
%convergent integrals \eqref{orbit-integral-eq}. 
%such that $$\pi_{X,U_0,\kappa}\circ j_{\kappa}=\pi_{U,U_0,\kappa}.$$
\end{lemma}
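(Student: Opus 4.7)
The plan is to reduce niceness of $(U_0,X)$ for complex $\kappa$ in the strip $a<\Re(\kappa)<b$ to the assumed niceness of $(U_0,U)$ at real exponents $k\in(a,b)$, by combining a modulus-domination argument in the $\kappa$-variable with the coinvariant surjectivity results from Sections \ref{coinv-nonarch-sec} and \ref{coinv-arch-sec}.

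First, I would argue that it suffices to treat the case of real $\kappa=k\in(a,b)$. For $\varphi\in\SS(X(F),|\om|^\kappa)$, absolute convergence of $\int_{\Om_x}\varphi\cdot\vol_\fg$ is the convergence of $\int_{\Om_x}|\varphi|\cdot|\vol_\fg|$, and $|\varphi|$ is a non-negative section of $|\om|^{\Re(\kappa)}$. In the non-archimedean case $|\varphi|$ is itself a compactly supported locally constant section in $\SS(X(F),|\om|^k)$; in the archimedean case one can pointwise dominate $|\varphi|$ by a non-negative $\widetilde\varphi\in\SS(X(F),|\om|^k)$ via standard smoothing of the modulus. Once absolute convergence is established for such a dominating $\widetilde\varphi$, smoothness of the resulting function of $x$ in the archimedean case follows from differentiation under the integral combined with dominated convergence, while in the non-archimedean case local constancy is immediate from that of $\varphi$.

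Second, for a non-negative $\widetilde\varphi\in\SS(X(F),|\om|^k)$ with $k\in(a,b)$, the cocharacter hypothesis is exactly the condition required for surjectivity of the map $j_{\UU\to\XX,\om,k}$ from $\SS(\UU,|\om|^k)$ to $\SS(\XX,|\om|^k)$: Lemma \ref{surjective-lem-nonarch}(i) applies in the non-archimedean case, and Lemma \ref{surjective-lem} with $D=\{k\}$ applies in the archimedean case. Hence there exists $\psi\in\SS(U(F),|\om|^k)$ whose class in the Schwartz coinvariants agrees with that of $\widetilde\varphi$. The orbital integrals of $\psi$ converge absolutely and produce a $C^\infty$ function on $(U_0/G)(F)$ by the assumed niceness of $(U_0,U)$, and these yield the correct values of $\pi_{X,U_0,k}(\widetilde\varphi)$ by $G(F)$-invariance of orbital integrals.

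The main obstacle is that absolute convergence is not invariant under coinvariant relations, so the identity of classes $[\widetilde\varphi]=[\psi]$ does not a priori imply pointwise convergence of $\int_{\Om_x}\widetilde\varphi\cdot\vol_\fg$. I would handle this by using the cocharacter $\la^\vee:\G_m\to G_x$ at each boundary point $x\in(X\setminus U)(F)$ to construct a $G(F)$-equivariant averaging operator: the non-triviality of the character $k\cdot\langle\chi_{\om,x},\la^\vee\rangle+\langle\De^{alg}_{G_x},\la^\vee\rangle$ (strengthened in the archimedean case by also varying over the weights appearing in $S^\bullet H^0 T^*_{[X/G]}|_x$) allows one to integrate against a suitable weight along the $\la^\vee$-action to produce an explicit decomposition $\widetilde\varphi=\psi+\sum_i(g_i\chi_i-\chi_i)$ in which each $\chi_i$ has controlled support near the boundary, so that the correction terms have absolutely convergent (and vanishing) orbital integrals over every $\Om_x$ with $x\in U_0$. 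This upgrades the coinvariant-level surjection to the required pointwise convergence statement and completes the argument.
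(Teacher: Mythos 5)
There is a genuine gap, and you have correctly located it yourself: the step where you claim the correction terms $g_i\chi_i-\chi_i$ have ``absolutely convergent (and vanishing) orbital integrals'' is circular. The identity $\int_{\Om_x}(g\chi-\chi)\cdot\vol_\fg=0$ follows from invariance of the measure only \emph{after} one knows that $\int_{\Om_x}|\chi|\cdot|\vol_\fg|<\infty$; otherwise the cancellation is not justified and the integral of the difference need not converge at all. Your $\chi_i$ are supported near the boundary $X\setminus U$, and the orbits $\Om_x$ for $x\in U_0$ are exactly the ones whose closures may meet that boundary (this is why the extension from $U$ to $X$ is non-trivial), so establishing absolute convergence for the $\chi_i$ is precisely the original problem again, not a reduction of it. The cocharacter hypothesis gives you surjectivity at the level of coinvariants (Lemma \ref{surjective-lem-nonarch}, Lemma \ref{surjective-lem}), but as you note, coinvariant classes do not see absolute convergence, and the averaging construction does not close this gap.

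A telling symptom is that your argument never uses the hypothesis $a<1<b$, which is essential. The paper's proof anchors everything at $\Re(\kappa)=1$: there the pushforward of $\varphi\cdot|\eta_i|$ to $(U_0/G)(F)$ is a finite positive measure, so the relevant integrals converge for free (Step 1, plus Fubini to pass from the distributional pushforward to the individual orbital integrals). One then uses the \emph{holomorphic-family} versions of the surjectivity lemmas (Lemma \ref{surjective-lem} for a complex domain $D'$, resp.\ Lemma \ref{surjective-lem-nonarch}(iii)) to produce a lift $\wt\varphi_\kappa\in\SS^{hol}(U(F)\times D',|\om|^\kappa)_{G(F)}$ whose orbital integrals $\pi_{U,U_0,\kappa}(\wt\varphi_\kappa)$ are smooth and depend holomorphically on $\kappa$; these agree with $\wt\pi_{X,U_0,\kappa}(\varphi)$ on the line $\Re(\kappa)=1$, and Lemma \ref{anal-cont-lem} (analytic continuation of $\int|f|^s\,\mu$ against a positive measure) then propagates absolute convergence from $\Re(\kappa)=1$ to the whole strip $a'<\Re(\kappa)<b'$ and identifies the result with the smooth function $\pi_{U,U_0,\kappa}(\wt\varphi_\kappa)$. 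Your fixed-$\kappa$ surjectivity plus domination by a real-exponent Schwartz function cannot substitute for this: without the anchor at $\kappa=1$ and the holomorphic family needed to invoke the continuation lemma, there is no mechanism in your argument that actually forces the orbital integrals to converge.
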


\begin{proof}
By shrinking $U_0/G$ we can assume that there exists a nowhere vanishing
%covering $X$ and $U_0$ with $G$-invariant open subsets, we can assume that there exists a nowhere vanishing top form $\eta$ on $X$, and a nowhere vanishing
$G$-invariant top form $\eta_0$ on $U_0/G$. 
On the other hand, we can cover $X$ with open subsets $U_i$ such that there exist nowhere vanishing top forms $\eta_i$ on $U_i$, so that $\eta_i\cdot \vol_{\fg}^{-1}$
are trivializations of $\om_X|_{U_i}\ot {\bigwedge}^{top}(\fg)$.
%Let us choose a partition of unity $1=\sum\phi_i$ adapted to the covering $U_i(F)$.
%Similarly, we can use $|\eta_0|^{\kappa}$ to identify

%(Also can ignore ${\bigwedge}^{top}(\fg)$???).
It is enough to consider sections of $\SS(X(F),|L|^{\kappa})$ supported on some $U_i$. Using the trivializations of $L|_{U_i}$, we can write these sections as
$$\varphi_\kappa:=\varphi\cdot |\eta_i\cdot \vol_{\fg}^{-1}|^{\kappa},$$
where $\varphi\in \SS(U_i(F))$, so
the corresponding integrals can be written as
%Then we have a identifications
%$$\SS(X(F))\rTo{\sim}\SS(X(F),|\om|^{\kappa}\ot |{\bigwedge}^{top}(\fg)|^{-1}):
%\varphi\mapsto \varphi\cdot |\eta\cdot \vol_{\fg}^{-1}|^{\kappa}\cdot |\vol_{\fg}|,$$ 
%$$\varphi\cdot |\eta\cdot \vol_{\fg}^{-1}|^{\kappa}\cdot \vol_{\fg}=\varphi\cdot |\frac{\eta}{\vol_{\fg}\eta_G}|^{\kappa}\cdot \vol_{\fg}\cdot |\eta_G|^\kappa,$$
%$$\C_{lc}((U_0/G)(F))\rTo{\sim} \C_{lc}((U_0/G)(F),|\om|^{\kappa}):\psi\mapsto \psi |\eta_G|^{\kappa}.$$
%For $\varphi\in \SS(U(F))$, we have
\begin{equation}\label{Ui-integral-eq}
\pi_{X,U_0,\kappa}(\varphi_\kappa)(\psi |\eta_0|^{1-\kappa})=
%|\eta_G|^\kappa\cdot 
\int_{U_0(F)\cap U_i(F)} \psi\cdot \varphi\cdot |\frac{\eta_i}{\vol_{\fg}\eta_0}|^{\kappa}\cdot |\eta_0|\cdot \vol_{\fg},
%|\frac{\eta_i}{\vol_{\fg}\eta_0}|^{\kappa}\cdot \vol_{\fg},$$
\end{equation}
where $\psi\in \SS((U_0/G)(F))$. Note that here $\eta_i/(\vol_{\fg}\eta_0)$ is a non-vanishing function on $(U_i\cap U_0)(F)$.
%Using the identification for $U$ instead of $X$, we can write the map $\pi^{vs}_{U,U_0,\kappa}$ as
%$$\pi^{vs}_{U,U_0,\kappa}(\varphi)(x)=\int_{\Om_x(F)} \varphi\cdot |\frac{\eta}{\vol_{\fg}\eta_G}|^{\kappa}\cdot \vol_{\fg}.$$

Furthermore, it is enough to consider the Schwartz sections as above with $\varphi$
non-negatve (since any $\varphi$ can be expressed as a difference of non-negative Schwartz functions).
From now one, we fix such $\varphi\in\SS(U_i(F))$.

\medskip

\noindent
{\bf Step 1}. We observe that for $\operatorname{Re}(\kappa)=1$, 
the integral \eqref{Ui-integral-eq} absolutely converges, since the measure $\varphi\cdot |\eta_i|$ has finite volume. 
Hence, \eqref{Ui-integral-eq} defines a well defined distribution $\pi_{X,U_0,\kappa}(\varphi_{\kappa})\in
 \DD((U_0/G)(F),|\om|^{\kappa})$.
%$$\wt{\pi}_{X,U_0,\kappa}:\SS(X(F),|\om_X\ot {\bigwedge}^{top}(\fg)|^{\kappa}\ot {\bigwedge}^{top}(\fg)^{-1})_G\to \DD((U_0/G)(F),|\om|^{\kappa})$$
%$$\wt{\pi}_{X,U_0,\kappa}(\varphi\cdot |\eta_i\vol_{\fg}^{-1}|^{\kappa})\in \DD((U_0/G)(F),|\om|^{\kappa})$$
%given by restricting $\varphi\cdot |\eta_i\vol_{\fg}^{-1}|^{\kappa}$ to $U_0(F)$ and then pushing forward to $(U_0/G)(F)$ (using $|\vol_{\fg}|$ as the fiberwise measure). More precisely, 
%this map sends
%$\varphi\cdot |\eta|^{\kappa}$ to $\pi_*(\varphi\cdot |\eta/\eta_G|^{\kappa}\cdot \eta_G)$, i.e., 
%for a test function $\psi\cdot |\eta_0|^{1-\kappa}\in \SS((U_0/G)(F),|\om|^{1-\kappa})$ and $\varphi\in\SS(U_i(F))$, one has 
%$$\wt{\pi}_{X,U_0,\kappa}(\varphi\cdot |\eta_i\vol_{\fg}^{-1}|^{\kappa})(\psi  |\eta_0|^{1-\kappa})=
%\int_{U_0(F)} \psi\cdot \varphi\cdot |\frac{\eta_i}{\vol_{\fg}\eta_0}|^{\kappa}\cdot |\vol_{\fg}\eta_0|.$$
%The latter integral is absolutely convergent for $\operatorname{Re}(\kappa)=1$ since 

%Note that if for some $\kappa$, the integrals defining $\wt{\pi}_{X,U_0,\kappa}(\varphi\cdot |\eta_i\vol_{\fg}^{-1}|^{\kappa})$ are absolutely convergent for all $\psi$,
%then by Fubini theorem, the integrals \eqref{orbit-integral-eq} are also absolutely convergent.

\medskip

\noindent
{\bf Step 2}. It is enough to prove the relevant absolute convergence for $\kappa$ in any slightly smaller strip $a'<\operatorname{Re}(\kappa)<b'$,
where $a'=a+\eps$, $b'=b-\eps$. Let us pick a bounded open domain $D'$ contained in this strip such that $D'\cap \R=(a',b')$.
By Lemma \ref{surjective-lem} in the archimedean case or by Lemma \ref{surjective-lem-nonarch}(iii) in the non-archimedean case,
%we claim that $\pi_{X,U_0,\kappa}(\varphi\cdot |\eta_i\cdot \vol_{\fg}^{-1}|^{\kappa})$ extends holomorphically to $a<\operatorname{Re}(\kappa)<b$.
%Indeed, 
%Using surjectivity of $j_{\kappa}$ (or rather ???), we can find for every point $t$ of the interval $(a,b)$ a lift
there exists an element
$$\wt{\varphi}_{\kappa}\in \SS(U(F)\times D',|\om|^{\kappa})_{G(F)}$$ 
lifting $\varphi_\kappa$.
%\cdot |\eta_i\cdot \vol_{\fg}^{-1}|^{\kappa}$.
%depending holomorphically on $\kappa$ in some disk $D(t)$ centered at $t$. 
Note that
$$\pi_{U,U_0,\kappa}(\wt{\varphi}_{\kappa})\in C^\infty((U_0/G)(F),|\om|^{\kappa})\sub  \DD((U_0/G)(F),|\om|^{\kappa})$$ 
%is the required holomorphic extension.
depends holomorphically on $\kappa\in D'$.

%Let us pick a finite number of disks $D(t_1),\ldots,D(t_N)$ covering $(a,b)$.

\medskip

\noindent
{\bf Step 3}. We know that for $\operatorname{Re}(\kappa)=1$,
the integrals defining the distribution $\pi_{X,U_0,\kappa}(\varphi_{\kappa})$
are absolutely convergent 
%for all $\psi$.
% and give elements in $C^\infty((U_0/G)(F),|\om|^{\kappa})$.
%For $\operatorname{Re}(\kappa)=1$, 
%Hence, for $\operatorname{Re}(\kappa)=1$ 
and we have an equality of twisted distributions on $(U_0/G)(F)$.
$$\pi_{U,U_0,\kappa}(\wt{\varphi}_{\kappa})=\pi_{U,U_0,\kappa}(\varphi_{\kappa})=\pi_{X,U_0,\kappa}(\varphi_\kappa).$$
In particular, for $\operatorname{Re}(\kappa)=1$, the image of $\pi_{X,U_0,\kappa}$ is contained in $C^\infty((U_0/G)(F),|\om|^{\kappa})$.

\medskip

\noindent
{\bf Step 4}. Now we arrive at the following situation: we have a holomorphic map 
$$\{\kappa\in\C \ |\ a'<\operatorname{Re}(\kappa)<b'\}\to C^\infty((U_0/G)(F))\sub \DD((U_0/G)(F)):\kappa\mapsto \pi_{U,U_0,\kappa}(\wt{\varphi}_{\kappa}),$$
such that for $\operatorname{Re}(\kappa)=1$ it is given by absolutely converging integrals $\pi_{X,U_0,\kappa}(\varphi_{\kappa})(\psi |\eta_0|^{1-\kappa})$.
%the direct image of $\varphi\cdot |\eta_i/\eta_0|^{\kappa}\cdot |\eta_0|$.

Now by Lemma \ref{anal-cont-lem} below, we conclude that for any non-negative $\psi$ the integral
\eqref{Ui-integral-eq} converges absolutely
%$\wt{\pi}_{X,U_0,\kappa}(\varphi\cdot |\eta_i\vol_{\fg}^{-1}|^{\kappa})(\psi|\eta_0|^{1-\kappa})$ 
%is given by absolutely convergent integrals 
for all $\kappa$ in the strip $a'<\operatorname{Re}(\kappa)<b'$, and agrees with $\pi_{U,U_0,\kappa}(\wt{\varphi}_{\kappa})(\psi |\eta_0|^{1-\kappa})$.
In particular, the distribution it defines belongs to $C^\infty((U_0/G)(F))$.
\end{proof}

The proof of the following Lemma is due to Pavel Etingof.

\begin{lemma}\label{anal-cont-lem}
Let $Y$ be a smooth variety over a local field $F$, $f$ an invertible function on $Y$, $\mu$ a positive measure on $Y(F)$.
Consider the integrals depending on $s\in C$,
$$I(s)=\int_{Y(F)}|f|^s \mu.$$
Assume that $I(s_0)<\infty$ for some $s_0\in\R$, so that $I(s)$ is well defined for $\operatorname{Re}(s)=s_0$, and  
%(hence, $I(s)$ is well defined for $\operatorname{Re}(s)=0$) 
%and for some $a<0$ and $b>0$
there exists an analytic continuation of $I(s)$ into an open neighborhood $U\sub \C$ of $s_0$.
Suppose $s_0\sub (a,b)\sub U\cap \R$.
Then $I(s)<\infty$ for $a<\operatorname{Re}(s)<b$.
\end{lemma}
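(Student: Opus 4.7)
The plan is to recast $I(s)$ as a Laplace transform of a positive measure and invoke a Landau-type positivity argument. The first step is to push $\mu$ forward under the Borel function $\log|f|:Y(F)\to\R$, obtaining a positive Borel measure $\nu$ on $\R$ for which
$$I(s)=\int_\R e^{su}\,d\nu(u)$$
whenever the integral converges. Since $|e^{su}|=e^{\Re(s)u}$, absolute convergence depends only on $t=\Re(s)$, and the set $T:=\{t\in\R:I(t)<\infty\}$ is an interval (by H\"older / log-convexity) containing $s_0$.

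The next step is to split $\nu=\nu_++\nu_-$ into the pieces supported on $[0,\infty)$ and $(-\infty,0)$, with $I_\pm(s):=\int e^{su}\,d\nu_\pm$. Then $I_+$ is non-decreasing with right abscissa $b_+:=\sup\{t:I_+(t)<\infty\}$, and $I_-$ is non-increasing with left abscissa $a_-:=\inf\{t:I_-(t)<\infty\}$; each is holomorphic on the corresponding half-plane $\{\Re(s)<b_+\}$ or $\{\Re(s)>a_-\}$, and $a_-\le s_0\le b_+$. Assume for contradiction $(a,b)\not\sub T$. By symmetry I may take $b_+<b$, so $b_+\in[s_0,b)\sub(a,b)\sub U$; then the given analytic continuation $\tilde I$ is holomorphic on some open disk $D$ of radius $\rho>0$ centered at $b_+$, which after shrinking lies in $\{\Re(s)>a_-\}$. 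The difference $\tilde I_+:=\tilde I-I_-$ is then holomorphic on $D$, and coincides with $I_+$ on the real interval $T\cap(b_+-\rho,b_+)$ because there $\tilde I(t)=I(t)=I_+(t)+I_-(t)$.

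The heart of the argument is Landau's observation. Pick a real $t_0\in(b_+-\rho/2,b_+)\cap T$, so the Taylor series of $\tilde I_+$ at $t_0$ has radius of convergence at least $\rho-(b_+-t_0)>b_+-t_0$. Differentiation under the integral (justified by the elementary bound $u^n e^{-\eps u}\le C_{n,\eps}$ for $u\ge0$ and $\eps>0$) gives, for all $n\ge0$,
$$\tilde I_+^{(n)}(t_0)=I_+^{(n)}(t_0)=\int_0^\infty u^n e^{t_0 u}\,d\nu_+(u)\ge 0,$$
so for any real $h$ with $b_+-t_0<h<\rho-(b_+-t_0)$, monotone convergence applied to the non-negative partial sums of the Taylor series yields
$$\tilde I_+(t_0+h)=\sum_{n\ge0}\frac{h^n}{n!}I_+^{(n)}(t_0)=\int_0^\infty e^{(t_0+h)u}\,d\nu_+(u)=I_+(t_0+h)<\infty,$$
contradicting $t_0+h>b_+=\sup\{t:I_+(t)<\infty\}$.

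The main obstacle is that $\nu$ need not be supported on a half-line, which would spoil the non-negativity of the derivatives of $I$ itself and break Landau's key step. The splitting $\nu=\nu_++\nu_-$ restores positivity on each piece, at the cost of having to produce a holomorphic extension of $I_+$ near $b_+$ by subtracting the automatically holomorphic $I_-$; checking that this reconstructed extension really is holomorphic on an honest disk around $b_+$ (not just on a punctured or one-sided region) is the technical crux, and it is exactly here that the hypothesis "$U$ is an open neighborhood containing the entire real interval $(a,b)$" — rather than merely a sector — is used.
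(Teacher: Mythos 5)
Your proof is the paper's proof in different clothing: the decomposition $\nu=\nu_++\nu_-$ according to the sign of $\log|f|$ is exactly the paper's splitting $I=I_++I_-$ over $\{|f|\ge 1\}$ and $\{|f|<1\}$, and your Landau step (non\-negative derivatives at a point $t_0$ just inside the abscissa, Taylor radius from the analytic continuation, then monotone convergence) is the same as the paper's bound $\int(-\ln|f|)^n\,\nu\le C\,n!\,R^n$ followed by summing the exponential series. So the approach is identical, and the main body of the argument is sound; the one thing your write-up does not cover is a boundary case.

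Specifically, you need the disk $D$ centered at $b_+$ to lie in $\{\Re(s)>a_-\}$ so that $\wt{I}_+=\wt{I}-I_-$ is holomorphic on all of $D$, and ``after shrinking'' this is possible only when $a_-<b_+$. All you know is $a_-\le s_0\le b_+$, so the degenerate case $a_-=s_0=b_+$ (i.e.\ $T=\{s_0\}$) is not excluded, and there no disk centered at $b_+$ is contained in $\{\Re(s)>a_-\}$. The fix uses a different mechanism than subtracting $I_-$ on a disk: $I_+$ is holomorphic on $\{\Re(s)<b_+\}$ and continuous up to the line $\Re(s)=s_0$ (dominated convergence, since $I_+(s_0)<\infty$), while $\wt{I}-I_-$ is holomorphic on $U\cap\{\Re(s)>s_0\}$ and continuous up to that line, and the two agree on the segment of $\{\Re(s)=s_0\}$ inside $U$; Morera's theorem then glues them into a function holomorphic on an honest neighborhood of $b_+=s_0$, after which your Landau argument runs unchanged. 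This gluing across the critical line is also what the paper implicitly invokes when it passes from ``analytic on $a<\Re(s)<0$ and continuous on $a<\Re(s)\le 0$'' to ``analytic continuation to the region $a<\Re(s)$''. With that patch, your argument is complete.
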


\begin{proof} Without loss of generality we can assume that $s_0=0$. Let us write $I(s)=I_+(s)+I_-(s)$, where
$$I_+(s)=\int_{|f|\ge 1}|f|^s\cdot \mu, \ \ I_-(s)=\int_{|f|<1}|f|^s \mu.$$
Since $I(0)<\infty$, the integral $I_+(s)$ (resp., $I_-(s)$) converges absolutely for $\Re(s)\le 0$ (resp., $\Re(s)\ge 0$).

It is enough to prove that $I_+(b)<\infty$ and $I_-(a)<\infty$. Let us prove the convergence of $I_-(a)$ (the proof for $I_+(b)$ is analogous).
The existence of an analytic continuation of $I(s)$ into $a<\Re(s)<\eps$ (where $\eps>0$) implies that $I_-(s)$ extends to a function on $a<\Re(s)\le 0$,
analytic on $a<\Re(s)<0$ and continuous on $a<\Re(s)\le 0$ (note for $I_+(s)$ this follows from the absolute convergence in $a<\Re(s)\le 0$).
It follows that $I_-(s)$ has an analytic continuation to the region $a<\Re(s)$.

Set
$$a_0:=\inf\{ s\in \R \ |\ I_-(s)<\infty\}.$$
Note that $a_0\le 0$.
We want to prove that $a_0\le a$. Assume that $a_0>a$. Consider the positive measure
$$\nu:=|f|^{a_0}\cdot \de_{|f|<1}\cdot \mu.$$
Then the integral
$$J(s):=\int_{Y(F)}|f|^s\cdot\nu$$
has the following properties:
\begin{itemize}
\item $J(s)<\infty$ for every $s>0$;
\item $J(s)$ has an analytic continuation to $a-a_0<\Re(s)$.
\end{itemize}
The analyticity of $J(s)$ near $s=0$ implies that $s$ near $0$ one has
$$J^{(n)}(s)=\int_{Y(F)}(\ln |f|)^n\cdot |f|^s\cdot\nu,$$
for each $n\ge 0$. Since $J(s)$ is analytic at $s=0$, there exists some $R>0$ and a constant $C>0$, such that
$$|J^{(n)}(0)|=\int_{Y(F)}(-\ln |f|)^n \nu\le C\cdot n!\cdot R^n.$$
But this implies that for $0\le t<R^{-1}$, one has
$$J(-t)=\int |f|^{-t} \nu=\int \sum_{n=0}^\infty \frac{t^n}{n!}(-\ln |f|)^n \nu\le \sum_{n=0}^\infty C (tR)^n<\infty.$$
Hence, $I_-(a_0-t)<\infty$ for $0\le t<R^{-1}$, contradicting the definition of $a_0$.
\end{proof}

We also need another type of extension of $\pi_{U,U_0,\kappa}$.

\begin{lemma}\label{extension-lem2}
Assume that the pair $(U_0,U)$ is $|\om|^{\kappa}$-nice.
%Assume that for some real numbers $a<1<b$,
%there is a well defined map 
%$$\pi_{U,U_0,\kappa}:\SS(U(F),|\om_X|^{\kappa})_{G(F)}\to C^\infty((U_0/G)(F),|\om|^{\kappa}),$$
%holomorphic on the strip $a<\operatorname{Re}(\kappa)<b$, which is given in this range by the 
%convergent integrals \eqref{orbit-integral-eq}.
Assume also that for every point $y\in (U_0/G)(F)$, there exists a neighborhood $V_y\sub (U_0/G)(F)$ of $y$ and
a closed subset $Z_y\sub X(F)$, such that $Z_y\sub U(F)$, and
for every $x\in U_0(F)$ mapping to $V_y$, one has $G(F)x\sub Z_y$.
Then the pair $(U_0,X)$ is $|\om|^{\kappa}$-nice.
%the conclusion of Lemma \ref{extension-lem1} holds, i.e., we can replace $U$ by $X$.
\end{lemma}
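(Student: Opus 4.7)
The plan is to reduce the claim to the known $|\om|^{\kappa}$-niceness of $(U_0,U)$ by localizing on $(U_0/G)(F)$ and replacing $\varphi$ by a suitable cutoff. Fix $y_0\in(U_0/G)(F)$ with associated $V_{y_0}\sub(U_0/G)(F)$ and $Z_{y_0}\sub U(F)$ as in the hypothesis, and fix $\varphi\in\SS(X(F),|\om|^{\kappa})$. The aim is to produce a function $\rho$ on $X(F)$ such that \emph{(a)} $\rho\cdot\varphi$, viewed via the extension-by-zero inclusion $\SS(U(F),|\om|^{\kappa})\hra\SS(X(F),|\om|^{\kappa})$, lies in $\SS(U(F),|\om|^{\kappa})$, and \emph{(b)} $\rho\equiv 1$ on an open neighborhood of $Z_{y_0}\cap\supp(\varphi)$. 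In the non-archimedean case $\rho$ is taken as the characteristic function of a compact open subset $W\sub X(F)$ separating the disjoint compact subsets $Z_{y_0}\cap\supp(\varphi)$ and $(X(F)\setminus U(F))\cap\supp(\varphi)$; such $W$ exists because $X(F)$ is an $l$-space. In the archimedean case $Z_{y_0}\cap\supp(\varphi)$ is a compact subset of the open set $U(F)$, so $\rho$ can be taken to be any smooth bump function on $U(F)$ equal to $1$ on a neighborhood of this compact set, with compact support in $U(F)$, extended by zero to $X(F)$. In either case $\rho\varphi$ is compactly supported inside $U(F)$ and hence lies in $\SS(U(F),|\om|^{\kappa})$.

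With such $\rho$ in hand, decompose $\varphi=\rho\varphi+(1-\rho)\varphi$. For any $x\in U_0(F)$ with $[x]\in V_{y_0}$, the freeness of the $G$-action on $U_0$ identifies $\Om_x(F)$ with $G(F)x\sub\ov{G(F)x}\sub Z_{y_0}$, so by (b) the section $(1-\rho)\varphi$ vanishes on $\Om_x(F)\cap\supp(\varphi)$, hence identically on $\Om_x(F)$. Its orbit integral is therefore the trivially (absolutely) convergent integral of zero, giving
$$\int_{\Om_x(F)}\varphi\cdot\vol_{\fg}=\int_{\Om_x(F)}(\rho\varphi)\cdot\vol_{\fg}.$$
By the assumed $|\om|^{\kappa}$-niceness of $(U_0,U)$ the right-hand side is absolutely convergent, and as a function of $[x]$ it is the restriction to $V_{y_0}$ of $\pi_{U,U_0,\kappa}(\rho\varphi)\in C^\infty((U_0/G)(F),|\om|^{\kappa})$. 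Consequently the left-hand side is absolutely convergent, and $\pi_{X,U_0,\kappa}(\varphi)|_{V_{y_0}}$ is of class $C^\infty$. Since the $V_{y_0}$ cover $(U_0/G)(F)$ as $y_0$ ranges, the pair $(U_0,X)$ is $|\om|^{\kappa}$-nice.

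The only delicate step is the construction of $\rho$, but since in both cases $\rho$ can be chosen with compact support inside $U(F)$, the resulting $\rho\varphi$ is automatically compactly supported on $U(F)$ and hence Schwartz there, so no tempered/Nash subtleties arise. The substantive content lies rather in the hypothesis that closures of the orbits through $V_{y_0}$ stay inside $U(F)$: this allows one to absorb the behaviour of $\varphi$ near the boundary $X(F)\setminus U(F)$ into the piece $(1-\rho)\varphi$, which contributes zero to every relevant orbit integral, thereby reducing the question entirely to the already-nice pair $(U_0,U)$.
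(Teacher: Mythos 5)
Your overall strategy is the same as the paper's: split $\varphi$ into a piece lying in $\SS(U(F),|\om|^{\kappa})$ and a piece that vanishes identically on every orbit $\Om_x(F)\sub Z_{y_0}$ with $[x]\in V_{y_0}$, then invoke the niceness of $(U_0,U)$ for the first piece. The non-archimedean half of your construction is correct, since there $\varphi$ has compact support and disjoint compact sets in an $l$-space can be separated by a compact open set.

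The archimedean half, however, rests on a false premise. You assert that $Z_{y_0}\cap\supp(\varphi)$ is compact, but Schwartz sections in the sense of Aizenbud--Gourevitch are only rapidly decreasing, not compactly supported (a Gaussian on $\A^n(\R)$ has full support), and $Z_{y_0}$ itself is merely closed in $X(F)$, not compact. So a compactly supported bump function $\rho$ equal to $1$ near $Z_{y_0}\cap\supp(\varphi)$ need not exist, and even a non-compactly-supported cutoff would have to be tempered for $\rho\varphi$ to remain Schwartz. The repair is exactly what the paper does: apply the tempered partition of unity (\cite[Thm.\ 5.2.1]{AG}) to the open cover $X(F)=U(F)\cup W_{y_0}$ with $W_{y_0}=X(F)\setminus Z_{y_0}$, writing $\varphi=\varphi_U+\varphi_{W_{y_0}}$ with $\varphi_U\in\SS(U(F),|\om|^{\kappa})$ and $\varphi_{W_{y_0}}\in\SS(W_{y_0},|\om|^{\kappa})$. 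Since $\Om_x(F)=G(F)x\sub Z_{y_0}$ does not meet $W_{y_0}$, the term $\varphi_{W_{y_0}}$ contributes zero to the orbit integrals over $V_{y_0}$, and the rest of your argument goes through verbatim. With this substitution your proof coincides with the paper's.
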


\begin{proof} It is enough to prove the assertion about distributions $\pi_{X,U_0,\kappa}(\varphi)$ restricted to $V_y$, i.e.,
when $\psi$ is supported on $V_y$.
Set $W_y:=X(F)\setminus Z_y$.
Consider the open covering $X(F)=U(F)\cup W_y$. For every $\varphi\in\SS(X(F),|\om|^{\kappa})$, we can write
$$\varphi=\varphi_U+\varphi_{W_y},$$
where $\varphi_U\in \SS(U(F),|\om|^{\kappa})$ and $\varphi_{W_y}\in \SS(W_y,|\om|^{\kappa})$.
Now by our assumptions, the distribution
$$\pi_{X,U_0,\kappa}(\varphi_U)=\pi_{U,U_0,\kappa}(\varphi_U)$$
is well defined and of class $C^\infty$, while $\pi_{X,U_0,\kappa}(\varphi_{W_y})|_{V_y}\equiv 0$.
This immediately implies the assertion.
%for any $x\in U_0(F)$ over $V_y$, the orbit $\Om_x$ does not intersect $W_y$. Hence, 
%the integral of $\varphi$ over $\Om_x$ is equal to the similar integral of $\varphi_U$, which implies that
%the integrals of $\varphi$ corresponding to points of $V_y$ are absolutely convergent and define a smooth function on $V_y$. 
\end{proof}

\subsection{A version with smooth coarse moduli}

%Let $\XX$ be a admissible stack of finite type over $F$.???
% i.e., a smooth DM-stack which admits an \'etale atlas $U\to \XX$, surjective on $F$-points. 
%Then for a line bundle $\LL$ on $\XX$ and $\kappa\in \C$,
%there is a natural space $C^\infty(\XX(F),|\LL|^{\kappa})$ defined as the kernel of 
%$$\SS(U(F),|L|^{\kappa})\rTo{p_1^*-p_2^*}\SS((U\times_{\XX} U)(F),|L'|^{\kappa}),$$
%for a smooth presentation $U\to \XX$ surjective of $F$-points,
%where $L'=p_1^*L\simeq p_2^*L$ (where $L$ is the pull-back of $\LL$ to $U$).

Let $X$ be a smooth $G$-scheme over $F$ (where $G=\GL_N$), $L$ a $G$-equivariant bundle on $X$, and $U_0\sub X$ an open $G$-invariant subset such that
there exists a smooth coarse moduli space $U_0/G$ of $[U_0/G]$. In other words, $U_0/G$ is a geometric quotient for the action of $G$ on $U_0$, but we do not
require the action of $G$ on $U_0$ to be free.
Then the natural projection $U_0\to U_0/G$ is smooth and its fibers are exactly $G$-orbits. Assume also that $L$ descends to a line bundle $\ov{L}$ on $U_0/G$.
We have natural integration maps 
$$\pi_{U_0,\kappa}:\SS(U_0(F),|L|^{\kappa})_{G(F)}\to \SS((U_0/G)(F),|\ov{L}|^{\kappa}),$$
and the results of Section \ref{integration-ext-sec} about extensions of these maps to maps $\pi_{X,U_0,\kappa}$ of the form
\eqref{lc-kappa-map}
%to $\SS(U(F),|L|^{\kappa})_{G(F)}$, where $U_0\sub U\sub X$, 
are still valid.

\section{Application to $\Bun_G$}\label{Bun-sec}

\subsection{Filtration of the stack of rank $2$ bundles}\label{filtr-sec}

Let $C$ be a curve of genus $g\ge 2$.
%$2$.
Let $\Bun_{L_0}$ denote the stack of rank $2$ bundles $E$ on $C$, equipped with an isomorphism $\det(E)\simeq L_0$, where $L_0$ is a fixed line bundle.
Morphisms in this stack are isomorphisms compatible with the identifications of the determinant bundles. 
In particular, for $L_0=\OO$, $\Bun_{\OO}$ is the stack of $\SL_2$-bundles on $C$. 

It is well known that each stack $\Bun_{L_0}$ is admissible (but not of finite type).
%Let 
%$$\mu:=\deg(L_0)/2$$
%be the corresponding slope.
We denote by $\Bun^{ss}_{L_0}\sub \Bun_{L_0}$ the substack of semistable bundles. 
%(which is a Deligne-Mumford stack whose coarse moduli is a smooth projective variety).

\begin{definition}
For each $n$, we define an open substack $\Bun_{L_0}^{\le n}\sub \Bun_{L_0}$ consisting of bundles $E$ such that any line subbundle $A\sub E$ has $\deg(A)\le n$.
\end{definition}

The filtration $(\Bun_{\OO}^{\le n})$ plays an important role in the proof that the stack $\Bun_{\SL_2}$ is truncatable in \cite[Sec.\ 6]{DG}.
Note that $\Bun_{L_0}^{\le \lfloor \deg(L_0)/2 \rfloor}$ is exactly the substack of semistable bundles $\Bun^{ss}_{L_0}$.

%Consider also the stack $\Bun_{L_0^{-1}}^{-n}$ of extensions $0\to A^{-1}L_0^{-1}\to E\to A\to 0$ where $A$ is a line bundle of degree $n$.
%Since $H^1(A^2L_0)=0$ if $\deg(A)\ge g-1$, for $n\ge g-1$ the morphism $\Bun_{B,L_0^{-1}}^{-n}\to \Bun_{L_0^{-1}}$ is smooth.

%\begin{lemma}\label{good-odd-lem} For $n\ge g-1$, every $E\in \Bun_{L_0^{-1}}^{\le n}\setminus \Bun_{L_0^{-1}}^{\le n-1}$ has form $E=A\oplus A^{-1}L_0^{-1}$, where
%$A$ is a line bundle of degree $n$. 
%\end{lemma}

%\begin{proof}
%Such $E$ admits a line subbundle $A\sub E$ of degree $n$, with $E/A\simeq A^{-1}L_0^{-1}$. But $\deg(A^2L_0)=2n+1\ge 2g-1$, so $\Ext^1(A^{-1}L_0^{-1},A)=H^1(A^2L_0)=0$.
%\end{proof}

%Thus, we can reduce to studying push-forwards of
%measures coming from $\Bun_{L_0^{-1}}^{\le g-2}$.

\begin{definition}
For any $n$, we define $(\Bun_{L_0}^{\le n})^0\sub \Bun_{L_0}^{\le n}$ to be the substack consisting of $E$ such that for any degree $n$ line subbundle $A\sub E$ one has 
$H^1(A^2L_0^{-1})=0$. 
\end{definition}

\begin{lemma}\label{complement-bundle-lem}
(i) For $E\in \Bun_{L_0}^{\le n}$, we have $E\in (\Bun_{L_0}^{\le n})^0$ if and only if 
for any degree $n$ line bundle $A$ with $H^1(A^2L_0^{-1})\neq 0$ one has $\Hom(A,E)=0$. 

\noindent
(ii) $(\Bun_{L_0}^{\le n})^0$ is an open substack of $\Bun_{L_0}^{\le n}$.

\noindent
(iii) Every $E\in (\Bun_{L_0}^{\le n})^0\setminus\Bun_{L_0}^{\le n-1}$ has form $E=A\oplus A^{-1}L_0$, where $\deg(A)=n$ and
$H^1(A^2L_0^{-1})=0$.

\noindent
(iv) $(\Bun_{L_0}^{\le n})^0=\Bun_{L_0}^{\le n-1}$ unless $n\ge (g-1+\deg(L_0))/2$.

\noindent
(v) $(\Bun_{L_0}^{\le n})^0=\Bun_{L_0}^{\le n}$ for $n>g-1+\deg(L_0)/2$.
\end{lemma}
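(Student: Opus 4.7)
My plan is to treat the five parts in the order (i), (iii), (iv), (v), (ii), handling the first four by short direct arguments and reserving the only substantial work for (ii), which is a semicontinuity and properness statement.

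For (i), the key observation will be that for $E\in\Bun_{L_0}^{\le n}$ and $A$ a line bundle of degree $n$, any nonzero map $A\to E$ must be an inclusion as a line subbundle: the image is $A$ itself (a line bundle has no nonzero proper quotient sheaves), and the saturation $A'\sub E$ is a line subbundle of degree $\ge n$, hence of degree exactly $n$ by the bound defining $\Bun_{L_0}^{\le n}$, so $A=A'$. This gives a bijection between nonzero maps $A\to E$ (for $A$ of degree $n$) and inclusions of $A$ as a degree-$n$ line subbundle of $E$, from which (i) is immediate. For (iii), given $E\in(\Bun_{L_0}^{\le n})^0\setminus\Bun_{L_0}^{\le n-1}$, I will choose a degree-$n$ line subbundle $A\sub E$; the quotient is forced to be $A^{-1}L_0$ by the determinant condition, yielding an extension
$$0\to A\to E\to A^{-1}L_0\to 0$$
with class in $\Ext^1(A^{-1}L_0,A)\simeq H^1(A^2L_0^{-1})$. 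This group vanishes by the definition of $(\Bun_{L_0}^{\le n})^0$, so the extension splits.

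Parts (iv) and (v) reduce to Riemann--Roch applied to $A^2L_0^{-1}$, whose degree is $2n-\deg(L_0)$. For (v), the hypothesis $n>g-1+\deg(L_0)/2$ gives $\deg(A^2L_0^{-1})>2g-2$, so $H^1(A^2L_0^{-1})=0$ for every $A$ by Serre duality, and the condition defining $(\Bun_{L_0}^{\le n})^0$ holds vacuously. For (iv), the hypothesis $n<(g-1+\deg(L_0))/2$ gives $\deg(A^2L_0^{-1})<g-1$, hence by Riemann--Roch $h^1(A^2L_0^{-1})\ge g-1-\deg(A^2L_0^{-1})>0$ for every degree-$n$ line bundle $A$. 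Combined with (i), any $E\in(\Bun_{L_0}^{\le n})^0$ must admit no degree-$n$ line subbundle at all, placing it in $\Bun_{L_0}^{\le n-1}$; the reverse inclusion is immediate, since for $E\in\Bun_{L_0}^{\le n-1}$ the condition on degree-$n$ subbundles is vacuous.

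The main obstacle is (ii), for which I will identify the complement of $(\Bun_{L_0}^{\le n})^0$ with the closed image of a proper map. Setting
$$Z=\{A\in\Pic^n(C) \ | \ H^1(A^2L_0^{-1})\neq 0\},$$
Serre duality identifies $Z$ with a Brill--Noether-type closed subscheme of $\Pic^n(C)$ (it is the locus where $\om_C\ot L_0\ot A^{-2}$ has a nonzero global section). On $\Pic^n(C)\times\Bun_{L_0}^{\le n}$ the universal line bundle $\AA$ and universal rank-$2$ bundle $\EE$ are defined, and semicontinuity of $h^0$ applied fiberwise over $C$ to $\AA^{-1}\ot\EE$ shows that $\{\Hom(\AA,\EE)\neq 0\}$ is closed. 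Its intersection with $Z\times\Bun_{L_0}^{\le n}$ pushes forward along the proper projection $\Pic^n(C)\times\Bun_{L_0}^{\le n}\to\Bun_{L_0}^{\le n}$ --- proper since $\Pic^n(C)$ is --- to a closed substack whose complement is exactly $(\Bun_{L_0}^{\le n})^0$ by (i). The only subtlety will be running this argument on the stack rather than on a scheme, which I will handle by pulling back along any smooth atlas of $\Bun_{L_0}^{\le n}$ and invoking the scheme-theoretic version.
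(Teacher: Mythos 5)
Your proposal is correct and follows essentially the same route as the paper: (i) via saturation of a nonzero map from a degree-$n$ line bundle, (iii) via vanishing of $\Ext^1(A^{-1}L_0,A)\simeq H^1(A^2L_0^{-1})$, (iv) and (v) via Riemann--Roch/Serre duality on $A^2L_0^{-1}$, and (ii) via properness of the closed locus $Z\sub\Pic^n(C)$ together with semicontinuity of $\hom(A,E)$. Your treatment of (ii) merely spells out details (universal bundles, atlas) that the paper leaves implicit.
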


\begin{proof}
(i) We just have to note that if $A\to E$ is a nonzero morphism from a line bundle $A$ of degree $n$ then $A$ is necessarily is a subbundle of $E$ (otherwise, there would be a subbundle in $E$ of degree $>n$).

\noindent
(ii) The subvariety $Z\sub \Pic^n(C)$ of $A$ with $H^1(A^2L_0^{-1})\neq 0$ is closed in $\Pic^n(C)$, hence proper. Therefore, the locus of $E$ such that $\Hom(A,E)\neq 0$
for some $A\in Z$ is closed, and so its complement is open.

\noindent
(iii) For such $E$ there exists a line subbundle $A\sub E$ such that $\Ext^1(A^{-1}L_0,A)\simeq H^1(A^2L_0^{-1})=0$, so the extension $0\to A\to E\to E/A\to 0$, where
$E/A\simeq A^{-1}L_0$, splits.

\noindent
(iv) If $H^1(A^2L_0^{-1})=0$ then $\deg(A^2L_0^{-1})=2n-\deg(L_0)\ge g-1$. Hence, $E$ cannot have a line subbundle $A$ of degree $n$ with $H^1(A^2L_0^{-1})=0$
unless  $n\ge (g-1+\deg(L_0))/2$.
%for $n\le (g-2+\deg(L_0))/2$

\noindent
(v) This follows from the fact that if $\deg(A^2L_0^{-1})=2n-\deg(L_0)>2g-2$ then $H^1(A^2L_0^{-1})=0$.
\end{proof}

%Equivalently,
%$A^2$ is not of the form $L_0^{-1}(q)$ for some point $q\in C$, i.e., $A^2$ is not in the image of the embedding $i_{L_0}:C\to J_C$ associated with $L_0$.
%(this means that $A^2$ does not belong to a certain theta-divisor in the Jacobian).
 
%Let  $(\Bun_{B,L_0^{-1}}^d)^0$ denote the stack of extensions  $0\to A^{-1}L_0^{-1}\to E\to A\to 0$ where $A$ is a line bundle of degree $d$ such that
%$H^1(A^2L_0)=0$. Then the morphism $(\Bun_{B,L_0^{-1}}^d)^0\to \Bun_{L_0^{-1}}$ is smooth.

%\begin{lemma}\label{complement-bundle-lem} 
%For any $n$, every $E\in (\Bun_{L_0}^{\le n})^0\setminus\Bun_{L_0}^{\le n-1}$ has form $E=A\oplus A^{-1}L_0$, where $\deg(A)=n$ and
%$H^1(A^2L_0^{-1})=0$.
%the pair $((\Bun_{L_0^{-1}}^{\le d})^0,\Bun_{L_0^{-1}}^{\le d-1})$ is good.
%\end{lemma}

%\begin{proof}
%This is similar to Lemma \ref{good-odd-lem}.
%Such $E$ admits a line subbundle $A\sub E$ of degree $n$, with $E/A\simeq A^{-1}L_0$. 
%But $\deg(A^2L_0)=2n+1\ge 2g-1$, so $\Ext^1(A^{-1}L_0^{-1},A)=H^1(A^2L_0)=0$.
%\end{proof}

\begin{lemma}\label{surjective-Bun-prop}
Assume $n\ge (g-1+\deg(L_0))/2$, and
%$$\operatorname{Re}(\kappa)>\frac{2n-\deg(L_0)-g+1}{2(2n-\deg(L_0))}$$
$$\operatorname{Re}(\kappa)\cdot 2(2n-\deg(L_0))-(2n-\deg(L_0)-g+1)\not\in\Z_{\le 0}$$
(resp.,
$$\operatorname{Re}(\kappa)\neq \frac{2n-\deg(L_0)-g+1}{2(2n-\deg(L_0))}$$
in the non-archimedean case).
Then for every point $x\in ((\Bun_{L_0}^{\le n})^0\setminus \Bun_{L_0}^{\le n-1})(F)$, there exists a cocharacter
$\la^\vee:\G_m\to \Aut(x)$ such that in the archimedean case, for any character $a\mapsto a^m$ of $\G_m$ appearing in
$S^\bullet H^0T^*_{\Bun_{L_0}}|_x$, one has
$$\Re(\kappa)\cdot \lan \chi_{\LL,x},\la^\vee\ran+m\neq -\lan\De^{alg}_{\Aut(x)},\la^\vee\ran;$$
while in the non-archimedean case,
$$\Re(\kappa)\cdot \lan \chi_{\om,x},\la^\vee\ran\neq -\lan\De^{alg}_{\Aut(x)},\la^\vee\ran.$$
%the assumptions of Lemma \ref{surjective-lem} are satisfied for the open inclusion
%$$\Bun_{L_0}^{\le n-1}\sub (\Bun_{L_0}^{\le n})^0$$
%and $\LL=\om$ provided 
\end{lemma}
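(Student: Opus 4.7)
The plan is to use the explicit description in Lemma \ref{complement-bundle-lem}(iii): every
$E \in ((\Bun_{L_0}^{\le n})^0 \setminus \Bun_{L_0}^{\le n-1})(F)$
is isomorphic to $A \oplus A^{-1}L_0$, with $\deg A = n$ and $H^1(A^2 L_0^{-1}) = 0$. Since $g \ge 2$, the assumption $H^1(A^2 L_0^{-1}) = 0$ rules out $A^2 \simeq L_0$ (otherwise $H^1(A^2 L_0^{-1}) = H^1(\OO) = \C^g \neq 0$), so $A \not\simeq A^{-1}L_0$. The inequality $n \ge (g-1+\deg L_0)/2$ together with $g \ge 2$ yields $\deg(A^{-2}L_0) = \deg L_0 - 2n < 0$, hence $\Hom(A, A^{-1}L_0) = H^0(A^{-2}L_0) = 0$. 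A direct computation of trace-free endomorphisms of $E$ then gives
$$\Aut(E) \simeq \G_m \ltimes \G_a^{h^0(A^2L_0^{-1})},$$
with $\G_m$ acting on the unipotent radical with weight $+2$. I take $\la^\vee:\G_m \hookrightarrow \Aut(E)$ to be the inclusion of this Levi factor, corresponding to $t \mapsto \operatorname{diag}(t,t^{-1})$ in the decomposition $E = A \oplus A^{-1}L_0$.

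Next, I compute the relevant $\la^\vee$-weights from the tangent complex $R\Gamma(C,\End^0 E)[1]$ of $\Bun_{L_0}$ at $E$. Decomposing $\End^0 E = \OO \oplus A^2 L_0^{-1} \oplus A^{-2}L_0$, the $\la^\vee$-weights on the summands are $0$, $+2$, $-2$. Riemann--Roch combined with $H^1(A^2L_0^{-1}) = 0$ yields $h^0(A^2L_0^{-1}) = 2n - \deg L_0 + 1 - g$ and $h^1(A^{-2}L_0) = 2n - \deg L_0 + g - 1$. Using $\om_{\Bun_{L_0}}|_E = \det H^0(\End^0 E)\otimes\det H^1(\End^0 E)^{-1}$ (the inverse super-determinant of the tangent complex) and the definition of $\De^{alg}_{\Aut(E)}$ as the action on $(\det\Lie\Aut(E))^*$, straightforward arithmetic gives
$$\lan \chi_{\om,E}, \la^\vee \ran = 4(2n - \deg L_0), \qquad -\lan \De^{alg}_{\Aut(E)}, \la^\vee \ran = 2(2n-\deg L_0) + 2 - 2g.$$
The non-archimedean inequality $\Re(\kappa)\cdot\lan\chi_{\om,E},\la^\vee\ran \neq -\lan\De^{alg}_{\Aut(E)},\la^\vee\ran$ is therefore equivalent to $\Re(\kappa) \neq \frac{2n - \deg L_0 - g + 1}{2(2n - \deg L_0)}$, which is exactly the hypothesis.

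For the archimedean refinement I also describe $H^0 T^*_{\Bun_{L_0}}|_E$. By Serre duality this equals $H^0(\End^0 E \otimes \om_C)$, and the assumption $H^1(A^2L_0^{-1}) = 0$ forces $H^0(A^{-2}L_0 \otimes \om_C) = 0$ via Serre duality again. Thus
$$H^0 T^*_{\Bun_{L_0}}|_E = H^0(\om_C) \oplus H^0(A^2 L_0^{-1}\otimes\om_C),$$
carrying $\la^\vee$-weights $0$ and $+2$. Consequently every character weight $m$ appearing in $S^\bullet H^0 T^*_{\Bun_{L_0}}|_E$ is a nonnegative even integer, and the required archimedean inequality
$$4(2n-\deg L_0)\Re(\kappa) + m \neq 2(2n-\deg L_0)+2-2g$$
holds simultaneously for all such $m$ precisely when $\Re(\kappa) > \frac{2n - \deg L_0 - g + 1}{2(2n - \deg L_0)}$, matching the hypothesis exactly.

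The main obstacle is sign bookkeeping: normalizing the super-determinant convention for $\om_{\Bun_{L_0}}$ as a line bundle on this Artin stack, the adjoint convention in the definition of $\De^{alg}$, and ensuring via Serre duality that the potentially troublesome summand $H^0(A^{-2}L_0\otimes\om_C)$ really vanishes under our hypothesis. All other steps reduce to Riemann--Roch and weight-counting.
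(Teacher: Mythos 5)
Your proposal is correct and follows essentially the same route as the paper: the same structural description $E=A\oplus A^{-1}L_0$ from Lemma \ref{complement-bundle-lem}(iii), the same cocharacter $a\mapsto(a,a^{-1})$, the same decomposition $\und{\End}_0(E)\simeq\OO\oplus A^2L_0^{-1}\oplus A^{-2}L_0$ with weights $0,\pm 2$, and the same Riemann--Roch computation yielding $\lan\chi_{\om},\la^\vee\ran=4(2n-\deg L_0)$, $-\lan\De^{alg},\la^\vee\ran=2(2n-\deg L_0-g+1)$, and non-negative even weights on $S^\bullet H^0T^*_{\Bun_{L_0}}|_E$. The extra details you supply (Serre duality for $H^0T^*$, the observation $A\not\simeq A^{-1}L_0$) are harmless refinements of the same argument.
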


\begin{proof}
By Lemma \ref{complement-bundle-lem}(iii), every $E$ in 
$(\Bun_{L_0}^{\le n})^0\setminus \Bun_{L_0}^{\le n-1}$ has form
$E=A\oplus A^{-1}L_0$, with $\deg(A)=n$ and $H^1(A^2L_0^{-1})=0$. 
Note that this implies that $\deg(A^2L_0^{-1})\ge g-1>0$, hence $H^0(A^{-2}L_0)=0$.

One has
$$\und{\End}_0(E)\simeq \OO\oplus A^2L_0^{-1}\oplus A^{-2}L_0.$$
Hence, the Lie algebra of automorphisms of $E$ (with trivial determinant) is $H^0(\OO)\oplus H^0(A^2L_0^{-1})$,
and the fiber of the canonical line bundle of $\Bun_{L_0}$ at $E$ is identified with 
$$\om_{\Bun_{L_0}}|_E\simeq \det(H^1(\OO)^*)\ot\det(H^1(A^{-2}L_0)^*)\ot \det(H^0(A^2L_0)).$$

Let us consider the subgroup $\la^\vee:\G_m\hra \Aut(E)$ acting by $(a,a^{-1})$ on $A\oplus A^{-1}L_0$.
Since the induced action of $\G_m$ on $A^2L_0^{-1}$ (resp., $A^{-2}L_0$) has weight $2$ (resp., $-2$), we have
$$\lan\chi_{\om},\la^\vee\ran=2(\chi(C,A^2L_0^{-1})-\chi(C,A^{-2}L_0))=4(2n-\deg(L_0)),$$
$$-\lan\De^{alg}_{\Aut(E)},\la^\vee\ran=2 \chi(C,A^2L_0^{-1})=2(2n-\deg(L_0)-g+1)$$
(here we use the vanishing of $h^1(A^2L_0^{-1})$).

Also, the $\G_m$-weights of $H^0T^*_{\Bun_{L_0},E}\simeq H^1(A^{-2}L_0)^*\oplus H^1(\OO)^*$ are $0$ and $2$.
Hence, the weights on the symmetric algebra of $H^0T^*_{\Bun_{L_0},E}$ are non-negative even integers.
%Thus, in the nonarchimedean case we need to check that
%$$q^{4(2n-\deg(L_0))\kappa}\neq q^{2 \chi(A^2L_0^{-1})}=q^{2(2n-\deg(L_0)-g+1)},$$
%while in the archimedean case we need
%$$4(2n-\deg(L_0))\kappa+2m\neq \chi(A^2L_0^{-1})=2(2n-\deg(L_0)-g+1) \text{ for any } m\in \Z_{\ge 0}.$$
%In both cases this 
Now the assertion follows from our assumption on $\kappa$.
\end{proof}

\subsection{Limits of very stable bundles}\label{vs-lim-sec}

%Recall the definition of very stable bundles???

%For each $n\le g-2$, the complement $\Bun_{L_0^{-1}}^{\le d}\setminus (\Bun_{L_0^{-1}}^{\le d})^0$ consists of vector bundles $E$ appearing as an extension
%$$0\to A\to E\to A^{-1}L_0^{-1}\to 0$$
%for some line bundle $A$ of degree $d$ such that $H^1(A^2L_0)\neq 0$.
% Note that every such $E$ specializes to $A\oplus A^{-1}L_0^{-1}$.

%Thus, the closure of a stable bundle $E$ intersects $\Bun_{L_0^{-1}}^{\le d}\setminus (\Bun_{L_0^{-1}}^{\le d})^0$ if and only $A\oplus A^{-1}L_0^{-1}$ is in the closure of $E$, for %some line bundle $A$ of degree $d$ such that $H^1(A^2L_0)\neq 0$.

Recall that a rank $2$ bundle $E$ on $C$ is {\it very stable} if any nilpotent Higgs field $\phi:E\to E\ot\om_C$ is zero. It is known that a very stable bundle is stable.

We need some information on bundles which can appear as specializations of very stable bundles.
Recall that a bundle $E_0$ is a specialization of $E$ if there exists a family of bundles on $C$, $(E_s)$, parametrized by an irreducible base $S$ and a point $s_0\in S$
such that $E_s\simeq E$ for $s\neq s_0$ and $E_{s_0}\simeq E_0$.
 
\begin{lemma}\label{odd-degree-closure-lem}
%(i) Let $E$ be a stable bundle with $\det E=L_0$, and let $A$ be a line bundle such that $H^1(A^2L_0^{-1})\neq 0$ and $\deg(A)>\deg(L_0)/2$. 
%If $A\oplus A^{-1}L_0$ is a specialization of $E$ then $E$ fits into an extension
%$$0\to A^{-1}L_0(D)\to E\to A(-D)\to 0$$
%for some effective divisor $D$.
%\noindent
%(ii) 
Let $E$ be a very stable bundle with $\det(E)\simeq L_0$. 
Assume $E_0\in \Bun_{L_0}^{\le n}$, where $n\ge \deg(L_0)/2$, is such that $\Hom(E,E_0)\neq 0$.
Then $E_0\in (\Bun_{L_0}^{\le n})^0$.
In particular, $E$ does not specialize to any bundle in $\Bun_{L_0}^{\le n}\setminus (\Bun_{L_0}^{\le n})^0$, where
$n\ge\deg(L_0)/2$.
%$A^2\simeq L_0^{-1}(q)$, for some $q\in C$. This is equivalent to the condition $\Hom(A^{-1}L_0^{-1},A)\neq 0$.
\end{lemma}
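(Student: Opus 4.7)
The plan is to argue by contradiction: assuming $E_0$ has a line subbundle $A$ of degree $n$ with $H^1(A^2 L_0^{-1}) \neq 0$, I will build a nonzero nilpotent trace-free Higgs field on $E$. Pick a nonzero $s \in H^0(\omega_C \otimes A^{-2} L_0)$, which exists by Serre duality, and view it as a nonzero morphism $s : A \to A^{-1}L_0 \otimes \omega_C$. Pick a nonzero $f \in \Hom(E, E_0)$. Since $E$ is very stable (hence stable) and $\det E = \det E_0 = L_0$, if $f$ were injective it would be an isomorphism, forcing $E$ to contain a line subbundle of degree $n \ge \deg(L_0)/2$, contradicting strict stability. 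Hence $f$ has rank-$1$ image; then $L := \ker f \subset E$ is a line subbundle (saturated since $E_0$ is torsion-free) with $\deg L < \deg(L_0)/2$, and $f$ factors as $E \twoheadrightarrow E/L = L^{-1}L_0 \hookrightarrow E_0$. The saturation $B \subset E_0$ of $\im f$ is a line subbundle with $\deg B \ge \deg(L_0) - \deg L > \deg(L_0)/2$.

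Next I would verify $A = B$. Otherwise, as two distinct saturated rank-$1$ subsheaves, $A \cap B = 0$, so $A \oplus B \hookrightarrow E_0$; the induced map on determinants yields an injection $A \otimes B \hookrightarrow L_0$, hence $\deg A + \deg B \le \deg(L_0)$, contradicting $\deg A = n \ge \deg(L_0)/2$ together with $\deg B > \deg(L_0)/2$. The pairing $E_0 \otimes E_0 \to \det E_0 = L_0$ induces $E_0^\vee \simeq E_0 \otimes L_0^{-1}$, and dualizing $f$ produces $g : E_0 \to E$ (the sheaf analogue of the classical adjugate of a $2 \times 2$ matrix) with $g \circ f = 0$. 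A rank computation shows $\ker g$ equals the saturation of $\im f$, which is $A = B$, so $g$ factors as $E_0 \twoheadrightarrow A^{-1}L_0 \hookrightarrow E$, with image contained in $L$.

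Define the Higgs field
$$\phi : E \twoheadrightarrow L^{-1}L_0 \hookrightarrow A \xrightarrow{s} A^{-1}L_0 \otimes \omega_C \hookrightarrow E \otimes \omega_C,$$
the final arrow coming from the above factorization of $g$ tensored with $\omega_C$. Then $\phi(L) = 0$ because $L = \ker(E \twoheadrightarrow L^{-1}L_0)$, while $\im \phi \subset L \otimes \omega_C$ by construction; so $\phi$ is strictly triangular for the filtration $L \subset E$, giving $\phi^2 = 0$ and $\tr \phi = 0$, and $\phi$ is nonzero since each factor is either a surjection or a nonzero morphism of line bundles. This contradicts very stability of $E$. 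The \emph{in particular} assertion then follows from upper semi-continuity of $s \mapsto \dim \Hom(E, \EE_s)$ applied to a flat family $\EE$ on $C \times S$ specializing $E$ to $E_0$ (concretely, for the locally free sheaf $(\pi_C^* E)^\vee \otimes \EE$), giving $\dim \Hom(E, E_0) \ge \dim \Hom(E, E) \ge 1$. I expect the main technical step to be the identification $A = B$ combined with the factorization of $g$ through $A^{-1}L_0 \hookrightarrow L \subset E$: this is what forces $\im \phi$ to lie inside $L \otimes \omega_C$ and thereby makes $\phi$ nilpotent and traceless, not merely nonzero.
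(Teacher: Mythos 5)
Your proof is correct and follows essentially the same route as the paper: both arguments build the nilpotent Higgs field $E\twoheadrightarrow \im f\hookrightarrow A\xrightarrow{s}\om_C A^{-1}L_0\hookrightarrow \om_C\ot\ker f\sub \om_C\ot E$ from the Serre-dual section $s\in H^0(\om_C A^{-2}L_0)$. The only cosmetic difference is in the bookkeeping: the paper deduces that $f$ lands in $A$ from $\Hom(E,A^{-1}L_0)=0$ and reads off $\ker f=A^{-1}L_0(D)$ from determinants (so the inclusion $\om_CA^{-1}L_0\hookrightarrow\om_C\ot\ker f$ is just the twist by $D$), whereas you identify the saturation of $\im f$ with $A$ by the two-subbundle degree argument and produce the same inclusion via the adjugate of $f$.
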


\begin{proof}
Assume that $\Hom(E,E_0)\neq 0$ and $E_0\in \Bun_{L_0}^{\le n}\setminus (\Bun_{L_0}^{\le n})^0$. 
Then there exists an exact sequence
$$0\to A\to E_0\to A^{-1}L_0\to 0$$
with $\deg(A)=n$, where $H^1(A^2L_0^{-1})\neq 0$. Since $n\ge\deg(L_0)/2$, we have
%(i) Note that by assumption 
$\deg(A^{-1}L_0)\le \mu(E)=\deg(L_0)/2$,
hence $\Hom(E,A^{-1}L_0)=0$ by the stability of $E$.

%Since $E_0$ is a specialization of $E$, we have $\Hom(E,E_0)\neq 0$. 
Since $\Hom(E,E_0)\neq 0$, the exact sequence
$$0\to \Hom(E,A)\to \Hom(E,E_0)\to \Hom(E,A^{-1}L_0)=0$$
shows that $\Hom(E,A)\neq 0$.
%if $A\oplus A^{-1}L_0$ is a specialization of $E$ then $\Hom(E,A\oplus A^{-1}L_0)\neq 0$, hence $\Hom(E,A)\neq 0$.
Let $E\to A$ be a nonzero map. Such a map factors through a  surjection $E\to A(-D)$, 
where $D$ is an effective divisor.
%which gives an exact sequence of the claimed form.
Therefore, $E$ fits into an extension
$$0\to A^{-1}L_0(D)\to E\to A(-D)\to 0$$

%\noindent
%(ii) First, we observe that ???
%A very stable bundle can never satisfy the conditions of (i) since for a line bundle $A$ as in this lemma,
Next, we observe that
$$\Hom(A,\om_CA^{-1}L_0)\simeq H^0(\om_CA^{-2}L_0)\simeq H^1(A^2L_0^{-1})^*\neq 0,$$ 
Hence, the composition
$$E\to A(-D)\to A\to \om_CA^{-1}L_0\to \om_CA^{-1}L_0(D)\to KE$$
gives a nonzero nilpotent Higgs field on $E$, so $E$ cannot be very stable. 
\end{proof}

\begin{cor}\label{specialization-cor} For $n<(g-1+\deg(L_0))/2$, a very stable bundle $E$ with determinant $L_0$ 
never specializes to a bundle in $\Bun_{L_0}^{\le n}$ which is not isomorphic to $E$.
\end{cor}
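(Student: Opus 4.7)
The plan is to reduce, via the two preceding results, to the situation where $E_0$ is itself stable, and then to invoke the classical fact that a nonzero morphism between stable bundles of the same rank and slope is an isomorphism.

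The first step is to produce a nonzero morphism $E\to E_0$. Realize the specialization by a flat family $\EE$ on $C\times S$ with $\EE_s\simeq E$ for $s\ne s_0$ and $\EE_{s_0}\simeq E_0$. Since $C\times S\to S$ is proper and $\EE$ is $S$-flat, upper semicontinuity of cohomology gives that $s\mapsto \dim\Hom(E,\EE_s)$ is upper semicontinuous on $S$. For $s\ne s_0$ this dimension is at least $1$ (containing an isomorphism $E\simeq\EE_s$), whence $\Hom(E,E_0)\ne 0$.

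The second step pins down where $E_0$ sits in the filtration. Let $m$ be the maximal degree of a line subbundle of $E_0$, so that $E_0\in \Bun_{L_0}^{\le m}\setminus\Bun_{L_0}^{\le m-1}$ and $m\le n<(g-1+\deg(L_0))/2$. If $m\ge \deg(L_0)/2$, then Lemma \ref{odd-degree-closure-lem} (applied with its $n$ taken to be $m$), combined with $\Hom(E,E_0)\ne 0$, yields $E_0\in(\Bun_{L_0}^{\le m})^0$; but the inequality $m<(g-1+\deg(L_0))/2$ together with Lemma \ref{complement-bundle-lem}(iv) forces $(\Bun_{L_0}^{\le m})^0=\Bun_{L_0}^{\le m-1}$, contradicting the choice of $m$. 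Therefore $m<\deg(L_0)/2$, so every line subbundle of $E_0$ has degree strictly less than $\mu(E_0)=\deg(L_0)/2$, i.e.\ $E_0$ is stable.

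To finish, observe that $E$ is very stable and hence stable, and that both $E$ and $E_0$ have rank $2$ and determinant $L_0$, so they share the slope $\deg(L_0)/2$. Any nonzero morphism $E\to E_0$ between stable bundles of equal rank and slope is automatically an isomorphism, whence $E_0\simeq E$. The only nonroutine step is phrasing the semicontinuity argument cleanly in terms of the ambient stacky notion of specialization, but once one passes to a concrete family the remainder is an immediate combination of the two preceding lemmas.
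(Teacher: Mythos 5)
Your proof is correct and follows essentially the same route as the paper: establish $\Hom(E,E_0)\neq 0$ by semicontinuity, then combine Lemma \ref{odd-degree-closure-lem} with Lemma \ref{complement-bundle-lem}(iv) to force $E_0$ into the (semi)stable locus, and conclude using stability of $E$ and equality of slopes. The only cosmetic difference is that you package the paper's iterative descent through the filtration as a single extremal argument on the maximal degree $m$ of a line subbundle, which in fact yields the slightly sharper conclusion that $E_0$ is stable rather than merely semistable.
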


\begin{proof} 
Assume $E$ specializes to $E_0\in \Bun_{L_0}^{\le n}$, where $E_0\not\simeq E$. Applying Lemma \ref{odd-degree-closure-lem} iteratively,
and taking into account Lemma \ref{complement-bundle-lem}(iv), we see that $E_0$ is semistable.
But then the condition $\Hom(E,E_0)\neq 0$ gives a contradiction, since $E$ and $E_0$ have the same slope and $E$ is stable.
\end{proof}

The following result on possible specializations of very stable bundles is not needed for our argument but it clarifies the picture.

\begin{prop}
Let $E$ be a rank $2$ very stable bundle, and let $E_0$ be a specialization of $E$ such that $E_0\not\simeq E$.
Then $E\simeq A\oplus B$, where $A$ and $B$ are line bundles such that $\Ext^1(B,A)=0$.
\end{prop}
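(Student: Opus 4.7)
The plan is to read this off directly from Lemma \ref{odd-degree-closure-lem}, Lemma \ref{complement-bundle-lem}(iii), and Corollary \ref{specialization-cor}. (I interpret the conclusion as $E_0\simeq A\oplus B$, since a very stable bundle is stable, hence indecomposable, so the statement cannot literally describe $E$.) The overall strategy is to locate $E_0$ in the smallest possible stratum of the filtration $(\Bun_{L_0}^{\le n})$ and then invoke the splitting statement for $(\Bun_{L_0}^{\le n})^0$.

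The first step is to establish that $\Hom(E,E_0)\neq 0$. Let $(E_s)_{s\in S}$ be a family on $C\times S$ realizing the specialization, with $E_s\simeq E$ for $s\neq s_0$ and $E_{s_0}\simeq E_0$. By upper semicontinuity of the dimension of $\Hom(\pr_1^*E,\cdot)$ along $S$, one has
\[
\dim\Hom(E,E_0)\ \ge\ \dim\Hom(E,E_s)\ =\ \dim\End(E)\ \ge\ 1
\]
for general $s$, so $\Hom(E,E_0)\neq 0$.

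Next, let $n$ be the smallest integer for which $E_0\in\Bun_{L_0}^{\le n}$. Because $E_0\not\simeq E$, Corollary \ref{specialization-cor} forces $n\ge (g-1+\deg(L_0))/2$, and since $g\ge 2$ this is in particular $>\deg(L_0)/2$. So the hypotheses of Lemma \ref{odd-degree-closure-lem} are satisfied (with $n$ as above and the already-established $\Hom(E,E_0)\neq 0$), giving $E_0\in (\Bun_{L_0}^{\le n})^0$. By minimality of $n$, $E_0\notin\Bun_{L_0}^{\le n-1}$, so Lemma \ref{complement-bundle-lem}(iii) yields $E_0\simeq A\oplus A^{-1}L_0$ with $\deg(A)=n$ and $H^1(A^2L_0^{-1})=0$. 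Setting $B:=A^{-1}L_0$ we have $AB\simeq L_0$ and
\[
\Ext^1(B,A)\ \simeq\ H^1(A\otimes B^{-1})\ =\ H^1(A^2L_0^{-1})\ =\ 0,
\]
as required.

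I don't expect a genuine obstacle here: the content is really in the earlier lemmas, and the only ingredient outside that machinery is the standard upper-semicontinuity remark producing $\Hom(E,E_0)\neq 0$. The mildly delicate bookkeeping is checking that $n\ge(g-1+\deg(L_0))/2$ implies $n\ge\deg(L_0)/2$ so that Lemma \ref{odd-degree-closure-lem} actually applies, but this is immediate from $g\ge 2$.
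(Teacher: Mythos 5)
Your proof is correct and takes essentially the same route as the paper's: Corollary \ref{specialization-cor} to force $n\ge (g-1+\deg(L_0))/2$, Lemma \ref{odd-degree-closure-lem} to place $E_0$ in $(\Bun_{L_0}^{\le n})^0$, and Lemma \ref{complement-bundle-lem}(iii) to produce the splitting; your reading of the conclusion as describing $E_0$ rather than $E$ is the intended one (the paper's own proof contains the same slip of notation). The only addition is that you spell out the semicontinuity argument giving $\Hom(E,E_0)\neq 0$, which the paper uses implicitly.
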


\begin{proof} Let $L_0:=\det(E)$. By Corollary \ref{specialization-cor}, $E_0\in \Bun_{L_0}^{\le n}\setminus \Bun_{L_0}^{\le n-1}$ for some $n\ge (g-1+\deg(L_0))/2$.
Hence, by Lemma \ref{odd-degree-closure-lem}, $E\in (\Bun_{L_0}^{\le n})^0\setminus \Bun_{L_0}^{\le n-1}$, and the assertion follows from 
Lemma \ref{complement-bundle-lem}(iii).
\end{proof}

\subsection{Main theorems}\label{main-thm-sec}

%We consider the Schwartz space $\SS(\Bun_{L_0}^{-1},|\om|^{\kappa})$.

%REWRITE in terms of $\PGL_2$, so that $\MM=\MM_{L_0}^{-1}$ is a smooth projective variety. 
Consider the open substack
$\MM^{vs}_{L_0}\sub \Bun_{L_0}$ of very stable bundles, and let $M^{vs}_{L_0}$ be its coarse moduli space. The variety $M^{vs}_{L_0}$ is smooth and the line
bundle $\om$ on $\MM^{vs}_{L_0}$ descends to $\om$ on $M^{vs}_{L_0}$.
%, so it makes sense to consider the spaces
%$C^\infty(\MM^{vs}_{L_0},|\om|^{\kappa})$ for $\kappa\in\C$.

\begin{theorem}\label{main-thm-nice} 
Assume that $g\ge 2$. For every open substack of the form $[X/\GL_N]$ in $\Bun_{L_0}$, let $X^{vs}\sub X$ denote the open subset corresponding
to very stable bundles. Then for $\operatorname{Re}(\kappa)\ge 1/2$, the pair $(X^{vs},X)$ is $|\om|^{\kappa}$-nice, so 
there is a well defined map
$$\pi_\kappa:\SS(\Bun_{L_0},|\om|^{\kappa})\to C^\infty(M^{vs}_{L_0},|\om|^{\kappa}),$$
where $\pi_{\kappa}(\varphi)$, viewed as a distribution, is given by absolutely convergent integrals \eqref{distr-integral-eq}.
%This map depends holomorphically on $\kappa$.
\end{theorem}

\begin{proof}
%We fix an open substack $[X/G]\sub \Bun^{\le n}_{L_0^{-1}}$ (with $G=\GL_N$) for some $n\ge 0$, and let $X^{vs}\sub X$ denote the $G$-invariant open subset corresponding
%to very stable bundles. We claim that for $\operatorname{Re}(\kappa)\ge 1/2$, the pair $(X^{vs},X)$ is $|\om|^{\kappa}$-nice, i.e.,
%>\frac{2n-\deg(L_0)-g+1}{2(2n-\deg(L_0))},$$ 
%the integrals over $G$-orbits of elements of $\SS(X(F),|\om_X|^{\kappa})$ are absolutely convergent and give a well defined map
%$$\pi_{X,X^{vs},\kappa}:\SS(X(F),|\om_X|^{\kappa})_{G(F)}\to C^\infty((X^{vs}/G)(F),|\om|^{\kappa}).$$
%Note that the theorem follows from this claim.
First, let $X^s\sub X$ be the $G$-invariant open subset corresponding to stable bundles (where $G=\GL_N$). Then we have a smooth geometric quotient $X^s/G$ of the action of $G$ on $X^s$,
which implies that the pair $(X^{vs},X^s)$ is $|\om|^{\kappa}$-nice for any $\kappa\in \C$.

Next, let us show that for $n<(g-1+\deg(L_0))/2$, the pair $(X^{vs},X^{\le n})$ is $|\om|^{\kappa}$-nice for any $\kappa\in \C$, where
$X^{\le n}\sub X$ is the $G$-invariant open subset corresponding to $\Bun_{L_0}^{\le n}$.
Indeed, since $(\Bun_{L_0}^{\le n})^0=\Bun_{L_0}^{\le n-1}$ for $n<(g-1+\deg(L_0))/2$ (see Lemma \ref{complement-bundle-lem}(iv)),
by Lemma \ref{odd-degree-closure-lem}, for such $n$ the $G$-orbits of points in $X^{vs}$ are contained in $X^s$.
Hence, the assertion follows from Lemma \ref{extension-lem2} applied to the embedding $X^s\hra X^{\le n}$.
In more detail, given a compact set $K$ of $G$-orbits in $X^{vs}/G$,
we have a closed subset $Z_K\sub X(F)$ consisting of points corresponding to bundles $E_0$ such that $\Hom(E,E_0)\neq 0$ for some $E\in K$.
By Lemma \ref{odd-degree-closure-lem}, we have $Z_K\sub X^s(F)$. On the other hand, by semicontinuity of $\dim\Hom(E,E_0)$,
the closures of $G$-orbits in $K$ are contained in $Z_K$. Hence, we can apply Lemma \ref{odd-degree-closure-lem} to deduce that the pair $(X^{vs},X^{\le n})$
is $|\om|^{\kappa}$-nice.
%above claim holds for all $\kappa$, since all $G$-orbits of points in $X^{vs}$ will be closed in $X$ by 
%Corollary \ref{specialization-cor}.

Next, we will use induction on $n$ to prove that for every $n$, the pair $(X^{vs},X^{\le n})$ is $|\om|^{\kappa}$-nice for $\Re(\kappa)\ge 1/2$.
For some $n\ge (g-1+\deg(L_0))/2$, let 
$$X^{\le n-1}\sub (X^{\le n})^0\sub X$$ 
denote the open subsets corresponding to the substacks $\Bun_{L_0}^{\le n-1}\sub (\Bun_{L_0}^{\le n})^0$.
By the induction assumption (or the case $n<(g-1+\deg(L_0))/2$), we can assume that the claim holds for $X^{\le n-1}$, i.e, the pair $(X^{vs},X^{\le n-1})$
is $|\om|^{\kappa}$-nice.

Note that we have
$$\operatorname{Re}(\kappa)\ge \frac{1}{2}>\frac{2n-\deg(L_0)-g+1}{2(2n-\deg(L_0))}.$$
Hence, by Lemma \ref{surjective-Bun-prop}, we can apply by Lemma \ref{extension-lem1}
%Lemma \ref{surjective-lem} are satisfied for 
to the inclusion
$[X^{\le n-1}/G]\sub [(X^{\le n})^0/G]$ and to the interval 
$$(a,b)=(\frac{2n-\deg(L_0)-g+1}{2(2n-\deg(L_0))},b)$$
with $b$ sufficiently large,
%map 
%$$j_{\kappa}:\SS(X^{\le n-1}(F),|\om|^{\kappa})_{G(F)}\to \SS(X^0,|\om|^{\kappa})_{G(F)}$$
%is surjective for 
and deduce that the pair $(X^{vs},(X^{\le n})^0)$ is $|\om|^{\kappa}$-nice. 

Finally, we would like to apply Lemma \ref{extension-lem2} to the embedding $(X^{\le n})^0\sub X^{\le n}$.
We need to check that the assumptions of this lemma are satisfied. Given a compact set $K$ of $G$-orbits in $X^{vs}/G$,
we have a closed subset $Z_K\sub X(F)$ consisting of points corresponding to bundles $E_0$ such that $\Hom(E,E_0)\neq 0$ for some $E\in K$.
By Lemma \ref{odd-degree-closure-lem}, we have $Z_K\sub (X^{\le n})^0(F)$. On the other hand, by semicontinuity of $\dim\Hom(E,E_0)$,
the closures of $G$-orbits in $K$ are contained in $Z_K$. Hence, by Lemma \ref{extension-lem2}, the pair $(X^{vs},X^{\le n})$ is $|\om|^{\kappa}$-nice.

Since $X^{\le n}=X$ for sufficiently large $n$, the assertion follows.
\end{proof}

In the non-archimedean case we can also prove the following boundedness result, which implies that the stack $\SS(\Bun_{L_0})$ is $\kappa$-bounded for each $\kappa\in\C$
in the terminology of \cite[Def.\ 2.10]{BK}.

\begin{theorem}\label{main-thm-bound}
Assume $F$ is non-archimedean and $g\ge 2$. 

\noindent
(i) For $n>g-2+\deg(L_0)/2$ and $\Re(\kappa)\ge 1/2$, the natural map
$$\SS(\Bun_{L_0}^{\le n},|\om|^{\kappa})\to \SS(\Bun_{L_0},|\om|^{\kappa})$$
is an isomorphism. 

\noindent
(ii) For any $\kappa\in\C$, the map
$$\SS(\Bun_{L_0}^{\le n},|\om|^{\kappa})\to \SS(\Bun_{L_0},|\om|^{\kappa})$$
is an isomorphism for large enough $n$.
\end{theorem}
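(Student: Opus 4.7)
I will deduce both parts from Lemma \ref{surjective-lem-nonarch}(i). Since $\SS(\Bun_{L_0},|\om|^\kappa)=\varinjlim_N\SS(\Bun_{L_0}^{\le N},|\om|^\kappa)$ along the increasing exhaustion of $\Bun_{L_0}$ by the finite-type open substacks $\Bun_{L_0}^{\le N}$, it suffices to show that the transition map $\SS(\Bun_{L_0}^{\le n},|\om|^\kappa)\to\SS(\Bun_{L_0}^{\le N},|\om|^\kappa)$ is an isomorphism for every $N\ge n$. Applying Lemma \ref{surjective-lem-nonarch}(i) to $\UU=\Bun_{L_0}^{\le n}\subset\XX=\Bun_{L_0}^{\le N}$, it remains to verify, for every $E\in\Bun_{L_0}(F)\setminus\Bun_{L_0}^{\le n}(F)$, both a numerical inequality for some cocharacter $\la^\vee:\G_m\to\Aut(E)$ and the structural hypothesis $\Aut(E)\simeq\G_m\ltimes\G_a^r$ with $\la^\vee$ the natural inclusion.

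The key point, and the place where the hypothesis on $n$ is used, is that any such $E$ in fact lies in the semistable-at-$m$ locus already handled in Lemma \ref{surjective-Bun-prop}. Let $A\subset E$ be the maximal destabilizing line subbundle and set $m=\deg(A)>n$. In case (i), the bound $n>g-2+\deg(L_0)/2$ yields $m\ge n+1>g-1+\deg(L_0)/2$, so $\deg(A^2L_0^{-1})>2g-2$ and $H^1(A^2L_0^{-1})=\Ext^1(A^{-1}L_0,A)=0$; hence the defining extension splits and $E\simeq A\oplus A^{-1}L_0$, so $E\in(\Bun_{L_0}^{\le m})^0\setminus\Bun_{L_0}^{\le m-1}$ by Lemma \ref{complement-bundle-lem}(iii). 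Lemma \ref{surjective-Bun-prop} applied with $m$ in place of $n$ then yields $\la^\vee:\G_m\to\Aut(E)=\G_m\ltimes H^0(A^2L_0^{-1})\simeq\G_m\ltimes\G_a^r$ (the natural inclusion) satisfying the real-part inequality
$$\Re(\kappa)\cdot\lan\chi_{\om,E},\la^\vee\ran\ne -\lan\De^{alg}_{\Aut(E)},\la^\vee\ran.$$
Both sides being integers, their difference has nonzero real part and therefore does not lie in $(2\pi i/\log q)\Z$, which is exactly the $q$-power inequality required by Lemma \ref{surjective-lem-nonarch}(i).

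It remains to check that the hypothesis of Lemma \ref{surjective-Bun-prop} holds for our $\kappa$ and every relevant $m$. That hypothesis reads $\Re(\kappa)\ne \tfrac12-\tfrac{g-1}{2(2m-\deg(L_0))}$; since $g\ge 2$ and $2m-\deg(L_0)>0$, the exceptional value is strictly less than $1/2$. For (i), every $\kappa$ with $\Re(\kappa)\ge 1/2$ satisfies this for all $m>n$, so Lemma \ref{surjective-lem-nonarch}(i) applies and passes to the colimit. For (ii), with $\kappa\in\C$ arbitrary, the equation $\Re(\kappa)=\tfrac12-\tfrac{g-1}{2(2m-\deg(L_0))}$ has at most one real solution in $m$ (and none when $\Re(\kappa)=1/2$), so enlarging $n$ to exceed both $g-2+\deg(L_0)/2$ and (if it happens to be a positive integer) this exceptional $m$, the inequality holds for every $m>n$ and the isomorphism follows. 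The main obstacle is thus essentially combinatorial: once one recognizes that the numerical bound forces every point outside $\Bun_{L_0}^{\le n}$ into the split locus of Lemma \ref{complement-bundle-lem}, the analytic content is entirely contained in the earlier Lemmas \ref{surjective-lem-nonarch} and \ref{surjective-Bun-prop}.
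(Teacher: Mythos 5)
Your proof is correct and follows essentially the same route as the paper: reduce to the finite-type inclusions in the filtration, observe that every bundle in the complement splits as $A\oplus A^{-1}L_0$ with $H^1(A^2L_0^{-1})=0$ so that $\Aut(E)\simeq\G_m\ltimes\G_a^r$, and invoke Lemmas \ref{surjective-Bun-prop} and \ref{surjective-lem-nonarch}(i). The only (cosmetic) difference is that you pass from $\Bun_{L_0}^{\le n}$ to $\Bun_{L_0}^{\le N}$ in one step via the maximal destabilizing subbundle, whereas the paper argues one step $n\to n+1$ at a time using Lemma \ref{complement-bundle-lem}(v) (which, rather than part (iii), is also the correct reference for the membership $E\in(\Bun_{L_0}^{\le m})^0$ in your argument).
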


\begin{proof}
(i) It suffices to prove that under these assumptions the map
\begin{equation}\label{Bun-n-n+1-kappa-map}
\SS(\Bun_{L_0}^{\le n},|\om|^{\kappa})\to \SS(\Bun_{L_0}^{\le n+1},|\om|^{\kappa})
\end{equation}
is an isomorphism. We are going to apply Lemma \ref{surjective-lem-nonarch}(i) to the inclusion $\Bun_{L_0}^{\le n}\hra \Bun_{L_0}^{\le n+1}$.

First of all, we observe that by Lemma \ref{complement-bundle-lem}(v), we have
$\Bun_{L_0}^{\le n+1}=(\Bun_{L_0}^{\le n+1})^0$, so by Lemma \ref{complement-bundle-lem}(iii), every
$E\in \Bun_{L_0}^{\le n+1}\setminus \Bun_{L_0}^{\le n}$ has form $E=A\oplus A^{-1}L_0$, where $H^1(A^2L_0^{-1})=0$. In this case $\deg(A^2L_0^{-1})>0$,
so $H^0(A^{-2}L_0)=0$. It follows that the group $\Aut(E)$ is the semidirect product of $\G_m$ acting by $(a,a^{-1})$ on $A\oplus A^{-1}L_0$ with the additive group
$\Hom(A^{-1}L_0,A)=H^0(A^2L_0^{-1})$. Together with Lemma \ref{surjective-Bun-prop} this implies that the assumptions of Lemma \ref{surjective-lem-nonarch}(i) are
satisfied and we deduce that the map \eqref{Bun-n-n+1-kappa-map} is an isomorphism.

\noindent
(ii) Pick $N$ such that $N>g-2+\deg(L_0)/2$ and $\Re(\kappa)\neq \frac{2n-\deg(L_0)-g+1}{2(2n-\deg(L_0))}$
for $n>N$. Then the same argument as in (i) shows that \eqref{Bun-n-n+1-kappa-map} is an isomorphism for $n\ge N$.
\end{proof}

Note that Theorems \ref{main-thm-nice} and \ref{main-thm-bound} imply \cite[Conj.\ 3.5(1)]{BK} for $G=\SL_2$.


\begin{thebibliography}{9}
\bibitem{AG} A.~Aizenbud, D.~Gourevitch, {\it Schwartz functions on Nash manifolds}, IMRN(2008), no.5, Art. ID rnm 155, 37 pp.
\bibitem{AG-sm-tr} A.~Aizenbud, D.~Gourevitch, {\it Smooth transfer of Kloosterman integrals (the archimedean case)}, 
Amer. J. Math. 135 (2013), no. 1, 143--182.
\bibitem{AGS} A.~Aizenbud, D.~Gourevitch, {\it Generalized Harich-Chandra descent, Gelfand pairs and an archimedean analog of Jacquet-Rallis's theorem},
Duke Math. J. 149 (2009), no. 3, 509--567.
\bibitem{BK} A.~Braverman, D.~Kazhdan, {\it Automorphic functions on moduli spaces of bundles on curves over local fields: a survey}, arXiv: 2112.08139. 
\bibitem{BKP} A.~Braverman, D.~Kazhdan, A.~Polishchuk, {\it Hecke operators for curves over non-archimedean local fields and related finite rings}, arXiv:2305.09595.
\bibitem{Bern} J.~Bernstein, {\it Representations of $p$-adic Groups}, Notes of lectures at Harvard Univ., 1992, available at 
math.tau.ac.il/$\sim$bernstei.
\bibitem{BZ} J.~Bernstein, A.~Zelevinsky,
{\it Representations of the group  $GL(n,F)$,  where  $F$  is a local non-Archimedean field}, Uspehi Mat. Nauk 31 (1976), no. 3, 5--70.
\bibitem{Cartier} P.~Cartier, {\it Representations of $p$-adic groups: a survey}, in {\it Automorphic forms, representations and L-functions (Corvallis, Ore., 1977)}, Part 1, 111--155,
AMS, Providence, RI, 1979.
\bibitem{DG} V.~Drinfeld, D.~Gaitsgory,
{\it Compact generation of the category of $D$-modules on the stack of  $G$-bundles on a curve}, Camb. J. Math. 3 (2015), no. 1-2, 19--125.
\bibitem{EFK1} P.~Etingof, E.~Frenkel, D.~Kazhdan, {\it An analytic version of the Langlands correspondence for complex curves}, arXiv:1908.09677. 
\bibitem{EFK2} P.~Etingof, E.~Frenkel, D.~Kazhdan, {\it Hecke operators and analytic Langlands correspondence for curves over local fields}, arXiv:2103.01509.
\bibitem{EFK3} P.~Etingof, E.~Frenkel, D.~Kazhdan, {\it A general framework for the analytic Langlands correspondence}, arXiv:2311.03743.
\bibitem{GK} D.~Gaitsgory, D.~Kazhdan, {\it Algebraic groups over a $2$-dimensional local field: some further constructions}, in 
{\it Studies in Lie theory}, 97--130, Birkh\"auser Boston, MA, 2006
\bibitem{Groth} A.~Grothendieck, {\it Produits tensoriels topologiques et espaces nucl\'eaires}, Mem. AMS 16 (1955).
\bibitem{Sak} Y.~Sakellaridis, {\it The Schwartz space of a smooth semi-algebraic stack}, Selecta Math. (N.S.) 22 (2016), no. 4, 2401--2490.
\bibitem{stacks} {\it Stacks project}, at stacks.math.columbia.edu.
\end{thebibliography}
\end{document}